\numberwithin{equation}{section}
\numberwithin{figure}{section}
\newtheorem{theorem}{Theorem}[section]
\newtheorem{lemma}[theorem]{Lemma}
\newtheorem{proposition}[theorem]{Proposition}
\newtheorem{remark}[theorem]{Remark}
\theoremstyle{definition}
\newtheorem{definition}[theorem]{Definition}
\newcommand{\C}{{\mathbb{C}}}
\renewcommand{\H}{{\mathbb{H}}}
\newcommand{\Z}{{\mathbb{Z}}}
\newcommand{\R}{{\mathbb{R}}}
\newcommand{\N}{{\mathbb{N}}}
\newcommand{\T}{{\mathbb{T}}}
\renewcommand{\L}{L}  
\newcommand{\p}{\mathbb{P}}
\renewcommand{\P}{{\mathcal{P}}}
\renewcommand{\t}{\mathfrak{t}}
\newcommand{\into}{\hookrightarrow}
\newcommand{\Flags}{{\mathcal F}\ell ags}
\renewcommand{\Re}{\operatorname{Re}}
\renewcommand{\Im}{\operatorname{Im}}
\DeclareMathOperator{\U}{U}
\DeclareMathOperator{\vol}{vol}
\DeclareMathOperator{\id}{id}
\newcommand{\abs}[1]{\lVert#1\rVert}
\newcommand{\sabs}[1]{\lvert#1\rvert}  
\newcommand{\sAbs}[1]{\bigl\lvert#1\bigr\rvert}  
\newcommand{\Abs}[1]{\bigl\lVert#1\bigr\rVert}
\newcommand{\aabs}[1]{\left\lvert#1\right\rvert}
\newcommand{\inv}{^{-1}} 
\newcommand{\define}[1]{\textsl{#1}}
\newcommand{\cross}{\times}
\newcommand{\st}{\mid}  
\newcommand{\del}{\partial}
\newcommand{\dd}[2]{\frac{\del #1}{\del #2}}
\newcommand{\hsm}{{\hspace{1mm}}}
\newcommand{\X}{{\mathcal X}}
\newcommand{\K}{\mathcal{K}}
\renewcommand{\U}{\mathcal{U}}
\newcommand{\kahler}{K\"{a}hler}     
\newcommand{\restr}[1]{\bigr\rvert_{#1}} 
\begin{document}

\title{Convergence of polarizations, toric degenerations, and
  Newton-Okounkov bodies}

\author{Mark Hamilton}
\address{Department of Math and Computer Science\\ 
Mount Allison University \\ 67 York St.\\ Sackville, NB, E4L 1E6\\ Canada}
\email{mhamilton@mta.ca}
\thanks{The first author is partially supported by an NSERC Discovery
  Grant.} 

\author{Megumi Harada}
\address{Department of Mathematics and
Statistics\\ McMaster University\\ 1280 Main Street West\\ Hamilton, Ontario L8S4K1\\ Canada}
\email{Megumi.Harada@math.mcmaster.ca}
\urladdr{\url{http://www.math.mcmaster.ca/Megumi.Harada/}}
\thanks{The second author is partially supported by an NSERC Discovery Grant
and a Canada Research Chair (Tier 2) Award.} 

\author{Kiumars Kaveh}
\address{Department of Mathematics\\ University of Pittsburgh\\ 301 Thackeray Hall\\
Pittsburgh, PA, 15260\\ USA} 
\email{kaveh@pitt.edu} 
\urladdr{\url{http://www.pitt.edu/~kaveh/}} 

\thanks{The third author is partially supported by a National Science Foundation Grant 
(Grant ID: DMS-1601303), Simons Foundation Collaboration Grant for Mathematicians, and Simons Fellowship award.}

\keywords{geometric quantization, polarization, integrable system,
  toric degeneration, degeneration of K\"ahler structure,
  valuations, Newton-Okounkov bodies} 
\subjclass[2010]{Primary: 53D50, 14D06; Secondary: 14M25}

\date{\today}


\begin{abstract}

  Let $X$ be a smooth irreducible complex algebraic variety of
  dimension $n$ and $L$ a very ample line bundle on $X$. Given a toric
  degeneration of $(X,L)$ satisfying some natural technical
  hypotheses, we construct a deformation $\{J_s\}$ of the complex
  structure on $X$ and bases $\mathcal{B}_s$ of $H^0(X,L, J_s)$ so
  that $J_0$ is the standard complex structure and, in the limit as
  $s \to \infty$, the basis elements approach dirac-delta
  distributions centered at Bohr-Sommerfeld fibers of a moment map
  associated to $X$ and its toric degeneration. The theory of
  Newton-Okounkov bodies and its associated toric degenerations shows
  that the technical hypotheses mentioned above hold in some
  generality.  Our results significantly generalize previous results
  in geometric quantization which prove ``independence of
  polarization'' between K\"ahler quantizations and real
  polarizations.  As an example, in the case of general flag varieties
  $X=G/B$ and for certain choices of $\lambda$, our result
  geometrically constructs a continuous degeneration of the
    (dual) canonical basis of $V_{\lambda}^*$ to a collection of
  dirac delta functions supported at the Bohr-Sommerfeld fibres
  corresponding exactly to the lattice points of a
  Littelmann-Berenstein-Zelevinsky string polytope
  $\Delta_{\underline{w}_0}(\lambda) \cap \Z^{\dim(G/B)}$.
\end{abstract}

\maketitle

%

\setcounter{tocdepth}{1}
\tableofcontents

\section{Introduction}
The motivation for the present manuscript arose from two rather
different research areas: the theory of geometric quantization in
symplectic geometry on the one hand, and the algebraic-geometric theory of
Newton-Okounkov bodies - particularly in its relation to
representation theory - on the other. Since we do not expect 
all readers of this paper to be familiar with both theories, we begin
with a brief description of each. 

We begin with a sketch of geometric quantization. As is well-known, symplectic geometry (Hamiltonian flows on
symplectic manifolds) is the mathematical language 
for formulating classical physics, whereas it is the language of
linear algebra and representation theory (unitary flows on Hilbert
spaces) which forms the basis for formulating quantum physics. It has
been a long-standing question within symplectic geometry to
understand, from a purely mathematical and geometric perspective, the relation between
the classical picture and the quantum picture, in terms of both the
phase spaces and the defining equations of the dynamics. In one
direction, to go from ``quantum'' to ``classical'', one can ``take a
classical limit''. The reverse direction, i.e. that of systematically
associating to a symplectic manifold $(M,\omega)$ a Hilbert space
$Q(M,\omega)$ and to similarly relate, for instance, Hamilton's
equations on $(M,\omega)$ to Schr\"odinger-type equations on
$Q(M,\omega)$, is generally referred to as the theory of
\emph{quantization}. In this manuscript, we deal specifically
with \emph{geometric quantization}, a theory which associates
to a symplectic manifold $(M,\omega)$ a Hilbert space $Q(M,\omega)$. 

For a fixed $(M,\omega)$, it turns out that there are many possible
ways of constructing a suitable Hilbert space $Q(M,\omega)$. To describe the choices we first
set some notation. First suppose that $[\omega]$ is an integral
cohomology class.
Next, let $(L,\nabla, h)$ be a Hermitian line bundle with
connection satisfying $\mathrm{curv}(\nabla)=\omega$. Such a triple is called
a \emph{pre-quantum line bundle}, or sometimes a
\emph{pre-quantization}.
Also required is a \emph{polarization,} of which the two main
types are as follows. 
A \emph{K\"ahler polarization}
is a choice of compatible complex structure $J$ on
$M$. Given such a $J$, one can define the quantization $Q(M, \omega)$
to be $H^0(M, L, J)$, the space of holomorphic sections of $L$ with
respect to this complex structure $J$. On the other hand, one may also
consider a (possibly singular) \emph{real polarization} of $M$,
which is a foliation of $M$ into Lagrangian submanifolds. Among the
Lagrangian leaves one can define a special
(usually finite, if $M$ is compact) subset
called the \emph{Bohr-Sommerfeld leaves}. There is not yet an agreed-upon
``correct'' definition of the corresponding Hilbert space for a real
polarization, but one approach which has been investigated, and which
will be used in this manuscript, is to consider distributional
sections supported on the set of Bohr-Sommerfeld leaves. Based on the
above discussion, the following natural question arises: \emph{Is the quantization
  $Q(M,\omega)$ ``independent of polarization,'' i.e., independent of
  the choices made?} More specifically, we can ask: does the quantization
coming from a K\"ahler polarization agree with the quantization coming from a
real polarization?  The results of this manuscript confirms 
independence of polarization in a rather large class of examples,
significantly extending previously known results which were restricted
to special cases such as toric varieties and flag varieties.

We next briefly motivate the theory of Newton-Okounkov bodies. 
The famous Atiyah-Guillemin-Sternberg and Kirwan convexity theorems link
equivariant symplectic and algebraic geometry to the combinatorics of
polytopes. In the case of a toric variety $X$, the combinatorics of
its moment map polytope $\Delta$ fully encodes the geometry of $X$,
but this fails in the general case. In his influential work, Okounkov
constructed (circa roughly 1996), for an (irreducible) projective
variety \(X \subseteq \p(V)\) equipped with an action of a reductive
algebraic group $G$, a convex body $\tilde{\Delta}$ and a
natural projection from $\tilde{\Delta}$ to the moment polytope
$\Delta$ of $X$. Moreover, the volumes of the fibers of this
projection encode the asymptotics of the multiplicities of the irreducible
representations appearing in the homogeneous coordinate ring of $X$,
or in other words, the Duistermaat-Heckman measure \cite{Okounkov-BM, Okounkov-log-concave}. Recently, Askold
Khovanskii and the third author (also independently Lazarsfeld and
Mustata \cite{LazMus}) 
vastly generalized Okounkov's ideas \cite{KavKho}, and in
particular constructed such $\tilde{\Delta}$ (called
\textit{Newton-Okounkov bodies} or sometimes simply \textit{Okounkov
  bodies}) even without presence of any group action. In the setting
studied by Okounkov, the maximum possible (real) dimension of the
Newton-Okounkov body $\tilde{\Delta}$ is the
  transcendence degree of $\C(X)^U$ where $U$ is a maximal unipotent
  subgroup of $G$; when there is no group action (as in the setting
  studied in \cite{LazMus, KavKho}) 
we have 
\(\dim_{\R}(\tilde{\Delta}) = \dim_{\C}(X).\) 
Hence one interpretation of the results of Okounkov, Lazarsfeld-Mustata and Kaveh-Khovanskii is that there \emph{is} a convex geometric/combinatorial object of `maximal'
dimension associated to $X$, even when $X$ is not a toric
variety. This represents a vast expansion of the possible settings in
which combinatorial methods may be used to analyze the geometry of
algebraic varieties. 
There is promise of a rich theory which interacts with a wide range of
inter-related 
areas: for instance, the third author showed \cite{Kav-cry} 
that the \textit{Littelmann-Berenstein-Zelevinsky string polytopes} from
representation theory, which generalize the well-known
\textit{Gel'fand-Cetlin polytopes}, are examples of $\tilde{\Delta}$. In the
long-term, one can expect further applications to Schubert calculus
and to geometric representation theory 
(e.g. see \cite{KST}). 

We now turn attention to the present manuscript. 
Firstly we should explain that the two seemingly disparate research
areas mentioned above are related due to the results
in \cite{HarKav}, which uses a certain toric 
degeneration that arises from (the semigroup associated to) a
Newton-Okounkov body \cite{Anderson} to construct \emph{integrable systems}\footnote{Here we use the term
  ``integrable system'' in the slightly non-standard sense of
  \cite[Definition 1.1]{HarKav}.}  on a wide class
of projective varieties. Integrable systems are highly special
Hamiltonian systems on symplectic (or, in our setting, K\"ahler)
manifolds, and naturally give rise to (singular) real polarizations. Therefore,
the theory of Newton-Okounkov bodies and their associated toric
degenerations provide a natural setting in which to examine the theory
of geometric quantization.

Before describing the statement of our main result (Theorem~\ref{theorem:main}) in more
detail we first recall the content of two manuscripts of Baier, Florentino, Mourao, and Nunes \cite{BFMN} and 
the first author and Konno 
\cite{HamKon}, on which much of the current manuscript is based. 

As already mentioned, a natural question that arises in the theory of
geometric quantization is that of independence of polarization, i.e.,
the isomorphism class of a geometric quantization should be
independent of the choices made. In the above context, this means we
wish to show $\dim H^0(X, L, J)$ is equal to the number of
Bohr-Sommerfeld fibres.  Nunes and his collaborators initiated a
``convergence of polarizations'' approach to this question.
Specifically, they deform the complex structure on $X$ in such a way
that the \kahler\ polarization it defines converges, in a suitable
sense to be further explained in Section~\ref{sec-main-result}, to the
real polarization on the same manifold.  (See~\cite{nunes} 
for an
overview of this program.)  Although there are more general versions
of this theory, in this paper we focus particularly on the case of symplectic
toric manifolds as described in~\cite{BFMN}, where the \kahler\
polarization converges to the (singular) real polarization given by
the fibres of the moment map (i.e., the integrable system) for a torus
action.  Indeed, for the case of a symplectic toric manifold $X$
associated to a Delzant polytope $\Delta$, it is well-known (see for
example~\cite{Ham-toric}) that there is a natural basis
$\{ \sigma^m \st m\in \Delta\cap \Z^n\}$ of the space $H^0(X,L, J)$ of
holomorphic sections of $L$ that is indexed by the integer lattice
points in $\Delta$.  It is also well-known that the Bohr-Sommerfeld
fibres in $X$ are exactly the moment map fibres over precisely the
same set of integer lattice points $\Delta \cap \Z^n$. In particular,
the dimensions of the two quantizations agree.  This is often seen as
one of the most basic and motivational examples of the phenomenon of
``independence of polarization''.

The first author and Konno extend the results of Baier, Florentino,
Mourao, and Nunes \cite{BFMN}, which only apply to toric manifolds, to
the case of the complete flag variety $\Flags(\C^n)$ by making use of
a toric degeneration of $\Flags(\C^n)$ as constructed in
\cite{Kogan-Miller}. The precise definition of a toric degeneration is
given in Section~\ref{sec-main-result}; roughly, it is a (flat) family
of algebraic varieties over $\C$ with generic fiber isomorphic to a
given variety (in this case $\Flags(\C^n)$) and special fiber a toric
variety.  The gradient-Hamiltonian-flow technique pioneered by Ruan
\cite{Ruan} allows one to ``pull back'' the integrable system on the
special fiber to one on the original variety $\Flags(\C^n)$ and also
enables the authors to apply the techniques of \cite{BFMN} to
$\Flags(\C^n)$.
 
With the above as motivation, we now describe
the main result of this manuscript, although we do not give
the full and precise statement due to its rather technical nature. Let
$X$ be a smooth, irreducible complex
algebraic variety with $\dim_{\C}(X) = n$, equipped with prequantum
data $(L, \nabla, h)$. Suppose $X$ admits a toric degeneration $\X$
as above (and made precise in Section~\ref{sec-main-result}). Under these assumptions, we can construct from the
toric degeneration $\X$ an integrable system $\mu: X \to \R^n$ on $X$
as in \cite{HarKav}. Very roughly, the main result of this paper then states
the following (see Theorem~\ref{theorem:main} for the precise
statement).

\medskip
\noindent \textbf{Theorem A.} 
  Under some natural technical hypotheses on $X$ and its toric
  degeneration $\X$, there exists a continuous deformation $\{J_s\}_{s
    \in [0,\infty)}$ of the complex structure on (the underlying
  smooth manifold of) $X$ such that $J_0$ is the original complex
  structure on $X$, and in the limit as $s$ goes to $\infty$, the
  K\"ahler polarization defined by $J_s$
  converges to the (singular) real polarization
  associated to the integrable system $\mu$ on $X$.

\medskip
A notable feature of the above theorem is that, following the work of \cite{BFMN, HamKon}, 
Theorem A gives an explicit correspondence between specific elements of the K\"ahler and the real quantization (rather than just an equality of dimensions). The theorem above places additional hypotheses on $X$ and its toric
degeneration, so one immediately then asks: when do these hypotheses
hold? The second result of this manuscript, made precise in
Theorem~\ref{theorem:toric deg from NOBY}, 
is that the construction given in
\cite{HarKav} gives a large class of examples on which Theorem
A applies, with the caveat that it is necessary to replace the
original line bundle $L$ with a suitable tensor power (or,
equiv galently, the original symplectic form with a positive integer
multiple thereof). 
Roughly, our result (Theorem~\ref{theorem:toric deg from NOBY}) states the following. 

\medskip
\noindent \textbf{Theorem B.} 
Let $X$ be as above, equipped with the line bundle $L$. Then the construction of the toric degeneration
given by valuations (as in \cite{HarKav}) can be made to satisfy the additional
technical hypotheses in Theorem A for the pair $(X, L^{\otimes d})$
for sufficiently large $d$ and thus gives `convergence of polarization' in these cases.
\medskip

It is also worth mentioning that, in the precise statement of our main Theorem A, as given in Theorem~\ref{theorem:main} below, we assert an \emph{existence} of a certain basis of holomorphic sections with appropriate convergence properties, which then implies Theorem A. In the general case considered in Theorem A, this basis is not very explicit for some of the values of the deformation parameter. However, in the situation of Theorem B where the toric degeneration arises from a valuation as above, we additionally show in Theorem~\ref{thm-sections-lin-ind} that this basis can be chosen to be both natural and explicit throughout the deformation. 

As already mentioned, there are several indications of interesting
connections between the theory of Newton-Okounkov bodies and
representation theory. Indeed, putting the results of \cite{Kav-cry}
and \cite{HarKav} together, we obtain an integrable system on a flag
variety $G/B$ whose moment map image is precisely the
Littelmann-Berenstein-Zelevinsky string polytope
$\Delta_{\underline{w}_0}(\lambda)$. This construction uses the
so-called \emph{(dual) canonical basis} of $V_{\lambda}^*$, the dual
space of the $G$-module $V_\lambda$ with highest weight $\lambda$. The
elements of this basis are parametrized by the lattice points
$\Delta_{\underline{w}_0}(\lambda) \cap \Z^{\dim(G/B)}$.  The
integrable system gives rise to a real polarization of
$L_\lambda \to G/B$ whose Bohr-Sommerfeld fibers are in one-to-one
correspondence with
$\Delta_{\underline{w}_0}(\lambda) \cap \Z^{\dim(G/B)}$, where
$L_\lambda$ is the usual pullback line bundle from the Pl\"ucker
embedding associated to weight $\lambda$.  Moreover, in this context,
the Borel-Weil theorem implies that the K\"ahler quantization
$H^0(G/B, L_\lambda)$ can be identified with $V_{\lambda}^*$. Hence,
in this special case and for sufficiently large multiples of
  $\lambda$, our Theorem A geometrically constructs a continuous
degeneration of a basis of $V_{\lambda}^*$ to a
collection of dirac delta functions supported at the Bohr-Sommerfeld
fibres corresponding exactly to the lattice points
$\Delta_{\underline{w}_0}(\lambda) \cap \Z^{\dim(G/B)}$.

The paper is organized as follows. In Section~\ref{sec-main-result} we
recall the necessary definitions and give a full and precise statement
of our main Theorem~\ref{theorem:main}. We set up the necessary
family of complex structures, based largely on the work of \cite{BFMN} and
\cite{HamKon}, in Section~\ref{sec:variation}. The proof of
Theorem~\ref{theorem:main} occupies Section~\ref{sec:proof}. We then
show in Section~\ref{sec:NOBY} that the construction in \cite{HarKav}
gives many examples of toric degenerations satisyfing the hypotheses
of Theorem~\ref{theorem:main}.

We close with some brief comments on open questions. 
Firstly, we believe that the proof of our main results can be
modified to work for an embedding of the toric degeneration into $Y
\times \C$ where $Y$ is any smooth projective toric variety (instead
of just a projective space). Secondly, we also believe that the
constructions in this paper should descend to a GIT quotient by a
torus action. We leave these open for future work.

\bigskip
\noindent \textbf{Acknowledgements.} We thank Yael Karshon for
providing the opportunity for us to learn about each other's past work,
thus helping to initiate this collaboration.

\section{Statement of the main theorem} \label{sec-main-result}

This section is devoted to the full and precise formulation of both
the hypotheses for, and the statement of, our main theorem. We first
provide a quick overview of geometric quantization and then dive
straight into the technicalities of our theorem. Some key motivational
remarks, which may aid a reader unfamiliar with this material, are
contained 
in Remark~\ref{remark:motivation}.

We begin with definitions in the theory of
geometric quantization. For details see 
e.g. \cite{Woodhouse}. 
Let $(X,\omega)$ be a symplectic manifold, i.e. $X$ is a smooth
manifold and $\omega$ is a closed
non-degenerate differential $2$-form on $X$. Suppose 
$(\L, h, \nabla)$ is a complex line bundle with Hermitian structure $h$
and a connection $\nabla$ satisfying 
$\text{curv}(\nabla) = \omega$ and with parallel transport preserving $h$. Such a triple is called a
\define{prequantum line bundle} (or sometimes \emph{prequantum data},
or \emph{prequantization}) of $(X,\omega)$. Note that for a prequantum line
bundle to exist, $[\omega]$ must be an integral cohomology class.

To pass from a prequantization to a quantization, we must choose a
\emph{polarization,} which is an integrable
complex Lagrangian distribution on $X$. 
We only deal with two types of
polarizations in this manuscript, as follows. 
Firstly, a \define{K\"ahler polarization} on $(X,\omega)$ is a 
compatible complex structure $J$. Then 
the corresponding \define{K\"ahler quantization} of $X$ 
is defined to be 
the space of holomorphic sections $H^0(X,L)$ of $L$, where 
$L$ is equipped with the holomorphic structure specified by $J$.  
Secondly, a \define{(singular) real polarization} on $X$ is a 
(singular) foliation of $X$ into Lagrangian submanifolds.  
Let $P$ denote the distribution in $TX$ corresponding to a real
polarization. By abuse of language, we frequently refer to
both the foliation and the distribution as a real polarization. 
A special case, of 
much recent interest in this area, 
is the (singular) foliation given by the fibres of a completely 
integrable system $F\colon X \to \R^n$. 
In this setting, a 
section $\sigma$ of $\L\restr{U}$ over some open set $U\subset X$ is 
said to be \define{flat along the leaves} or \define{leafwise flat} 
if it is covariantly constant with respect to $\nabla$ in directions 
tangent to $P$.
Leafwise flat sections always exist locally, but not usually globally.  
A leaf $\ell$ of the real polarization $P$ is a \define{Bohr-Sommerfeld leaf} 
if there exists a (nonzero) section $\sigma$ that is flat along the leaves 
and defined on all of $\ell$. 
The set of Bohr-Sommerfeld leaves is typically discrete in the space of 
leaves. 

There is not at present a single agreed-upon definition of
quantization using a real polarization.  
The basic philosophy is that the quantization corresponding to a real
polarization ``should'' be given by
leafwise flat sections over Bohr-Sommerfeld fibres, but since there
are no globally defined leafwise flat sections (the set of
Bohr-Sommerfeld fibers being usually discrete), this is not
straightforward. 
One possible approach is to relax the requirement that the sections be smooth 
and look at distributional sections supported on the set of 
Bohr-Sommerfeld leaves.  
Several examples have been 
investigated using this approach; see~\cite{nunes} and references 
therein. This is also the approach we take in this manuscript.

We now set up the terminology and notation
required for the statement of our main theorem. 
Let $X$ be a smooth, irreducible complex algebraic variety with
$\dim_\C(X)=n$. We suppose in addition that $X$ is equipped with
prequantum data $(\omega, J, L, h, \nabla)$ as above, where
$(\omega, J)$ is a K\"ahler structure on $X$ and $L$ is a very ample line
bundle over $X$ with Chern class equal to the K\"ahler class
(i.e. $c_1(L)=[\omega] \in
H^2(X,\Z)$).
In \cite{HarKav}, using ideas from \cite{NNU}, a toric degeneration is used to construct
an integrable system on $X$ which is a Hamiltonian $T^n$-action on an
open dense subset of $X$. Recall that a \emph{toric degeneration} of $X$ in
the sense of \cite{HarKav} is a flat family $\pi: \X \to \C$
of irreducible varieties such that the family is trivial over
$\C^*= \C\setminus \{0\}$ with each fiber isomorphic to $X$, and the
(possibly singular) central fiber $X_0 := \pi^{-1}(0)$ is a toric variety with respect to
a complex torus $\T_0$. 
In particular there exists a
fiber-preserving isomorphism $\varrho: X \times \C^* \to \pi^{-1}(\C^*)$
from the trivial fiber bundle $X \times \C^* \to \C^*$ to
$\pi^{-1}(\C^*)$ and it follows by assumption on $X$ that $\X$ is
smooth away from $X_0$. For a fixed $t \in \C^*$, let $X_t := \pi^{-1}(t)$
denote the fiber of the family $\X$ and let $\varrho_t$ denote the
restriction of $\varrho$ to $X \times \{t\}$. 
By assumption, $\varrho_1$ is an
isomorphism from $X \cong X\times \{t\}$ to $X_1$. We
will frequently identify $X$ with $X_1$ using this isomorphism
$\varrho_1$. 

In this manuscript, following \cite{HarKav} we assume that $X$ admits a toric degeneration
with the additional property that
the family $\X$ can be embedded in $\P
\times \C$ (where $\P \cong \p^N$ is a projective space for an
appropriate choice of $N$), as an
algebraic subvariety such that 
\begin{itemize}
\item[(a)] the map $\pi: \X \to \C$ is the restriction to $\X$ of the
  usual projection $\P \times \C \to \C$ to the second factor, and  
\item[(b)] the action of $\T_0$ on $X_0$ extends to a linear action of
  $\T_0$ on $\P \cong \P \times\{0\}$. 
\end{itemize}
Sometimes by abuse of notation we think of $X_t \subseteq \P \times \{t\}$ as a subvariety
of $\P$ via the natural identification $\P \cong \P \times \{t\}$. 
Next we equip the ambient projective space $\P$
with prequantum data $(\omega_\P,
J_\P, L_\P \cong \mathcal{O}(1), \nabla_\P, h_\P)$. 
In addition to the prequantum data on $\P$, we need data
on $\P \times \C$. We let $\Omega = (\omega_\P, \omega_{std})$ denote the product
K\"ahler structure on $\P \times \C$ where $\omega_{std}$ is the standard
symplectic structure $\frac{i}{2} dz \wedge d\bar{z}$ on $\C$ with
respect to the usual complex coordinate $z$. Moreover, by pulling
back via the projection $\pi_1: \P \times \C \to \P$ to the first factor, we
also have a line bundle $\pi_1^*L_\P$ on $\P \times \C$; this
restricts to a line bundle $L_\X$ on the family $\X$. 
Let $\omega_t := \Omega \vert_{X_t}$ (respectively $L_t := L_\X
\vert_{X_t}$) denote the restriction of $\Omega$ (respectively $L_\X$)
to the
fiber $X_t = \pi^{-1}(t)$. 
Moreover, pulling back the prequantum data on $\P$ to $\P \times \C$
via $\pi_1$ and restricting to $\X$ yields prequantum data on $\X$.
Let $\nabla_t$ and $h_t$ denote the restrictions of $\nabla_\X$ and
$h_\X$, respectively, to the fiber $X_t$.
With this notation in place we can state
further assumptions on our toric degeneration (also see
\cite{HarKav}): 
\begin{itemize}
\item[(c)] Under the isomorphism $\varrho_1: X \to X_1$, the
  prequantum data $(\omega_1, L_1, \nabla_1, h_1)$ on $X_1$ 
  pulls back to the prequantum data $(\omega, L, \nabla, h)$ on $X$. 
\item[(d)] The K\"ahler form $\Omega$ on $\P \times \C$ is
  $T_0$-invariant, where $T_0 \cong (S^1)^n$ is the compact torus
  subgroup of the complex torus $\T_0 \cong (\C^*)^n$ acting on the
  toric variety $X_0$.
\end{itemize}

In this context, it was shown in \cite{HarKav} that $X$
admits an integrable system which is a Hamiltonian torus action on an
open dense subset of $X$. We quote the following. 

\begin{theorem} \cite[Theorem (A) in Introduction]{HarKav} \label{theorem:HarKav}
  Let $X$ be a smooth, irreducible complex algebraic variety with
$\dim_\C(X)=n$ equipped with a K\"ahler structure $\omega$. 
Let $\pi: \X \to \C$ be a toric degeneration of $X$ in the sense
described above. Suppose that $\pi: \X \to \C$ additionally satisfies
assumptions (a)-(d). Then: 
\begin{enumerate}
\item[(1)] there exists a surjective continuous map $\phi: X \to X_0$ which is a 
symplectomorphism on a dense open subset $U \subset X$ (in the classical topology),
\item[(2)] there exists a completely integrable system $\mu = (F_1, \ldots, F_n)$ on $(X, \omega)$
such that its moment map
image $\Delta$ coincides with the moment map image of $(X_0, \omega_0)$ (which is a polytope). 
\item[(3)] Let $U \subset X$ be the open dense subset of $X$ from
  (1). Then the integrable system $\mu=(F_1, \ldots, F_n)$ generates a
  Hamiltonian torus action on $U$, and the inverse image
  {$\mu^{-1}(\Delta^\circ)$ of the interior of $\Delta$ under the
    moment map $\mu: X \to \R^n$ of the integrable system}
lies in the open subset $U$.
\end{enumerate}
\end{theorem}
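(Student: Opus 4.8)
The plan is to build the integrable system on $X$ by transporting the torus structure of the central fibre $X_0$ back to $X$ along the \emph{gradient-Hamiltonian flow} of the degeneration — Ruan's technique \cite{Ruan}, adapted to a singular central fibre following ideas of \cite{NNU}. I describe the three ingredients in turn.

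First I would set up the flow. Away from its singular locus $\mathrm{Sing}(\X)$, which by hypothesis is contained in $X_0$, the total space carries the K\"ahler form $\Omega$, and on the open set where $d\pi \neq 0$ one has Ruan's gradient-Hamiltonian vector field $V$, the negative $\Omega$-gradient of $\Re(\pi)$ divided by $\abs{\nabla\,\Re(\pi)}^2$. Its flow satisfies $\pi\bigl(\varphi^V_s(x)\bigr) = \pi(x) - s$, so it carries $X_1$ to $X_{1-s}$, and restricts on each fibre to a symplectomorphism $(X_1,\omega_1) \xrightarrow{\sim} (X_{1-s},\omega_{1-s})$.

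The crux, and the part requiring genuine work, is to control the limit as the fibre parameter tends to $0$. Because $X_0$ is singular, $\abs{V}$ blows up near $\mathrm{Sing}(X_0)$ and $\varphi^V_s$ need not be defined on all of $X_1 \cong X$ for every $s \in [0,1)$. One must show: (i) the set $U \subseteq X$ of points whose forward trajectory survives for all $s \in [0,1)$ is open and dense; (ii) along each such trajectory the limit as $s \to 1$ exists, defining a map $\phi \colon X \to X_0$; and (iii) $\phi$ is continuous and surjective, $\phi^{-1}\bigl(X_0 \setminus \mathrm{Sing}(X_0)\bigr) = U$, and $\phi|_U$ is a symplectomorphism onto $X_0 \setminus \mathrm{Sing}(X_0)$. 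The density of $U$, and the fact that only the singular points of $X_0$ fail to be covered, rest on a local normal-form analysis of $\X$ near $\mathrm{Sing}(X_0)$ that bounds, uniformly in the fibre parameter, the rate at which a trajectory can approach the singular locus; surjectivity then follows because $\phi$ is a homeomorphism onto a dense subset of the irreducible variety $X_0$, which has the same dimension as $X$. This establishes statement (1).

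Finally I would transfer the toric data. By hypotheses (b) and (d), $X_0 \subseteq \P \times \{0\}$ inherits the restriction of the globally smooth Fubini--Study-type moment map $\mu_\P \colon \P \to \R^n$ of the linear compact torus action; set $\mu_0 := \mu_\P|_{X_0} \colon X_0 \to \R^n$, a continuous map whose image is the polytope $\Delta$ and for which $\mu_0^{-1}(\Delta^\circ)$ is exactly the open dense $\T_0$-orbit, which lies in $X_0 \setminus \mathrm{Sing}(X_0)$. Put $\mu := \mu_0 \circ \phi =: (F_1,\dots,F_n)$; this is continuous on all of $X$ with $\mu(X) = \mu_0(X_0) = \Delta$. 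On $U$, since $\phi|_U$ is a symplectomorphism onto $X_0 \setminus \mathrm{Sing}(X_0)$, the $F_i$ Poisson-commute, are functionally independent on the dense open subset where $\mu_0$ is a submersion, and generate a Hamiltonian $T^n$-action; hence $\mu$ is a completely integrable system in the sense of \cite{HarKav}, which is statement (2). Moreover $\mu^{-1}(\Delta^\circ) = \phi^{-1}\bigl(\mu_0^{-1}(\Delta^\circ)\bigr)$ is the $\phi$-preimage of the open dense orbit, hence contained in $\phi^{-1}\bigl(X_0 \setminus \mathrm{Sing}(X_0)\bigr) = U$, which is statement (3). The one real obstacle throughout is the flow extension of the previous step: one needs control of $\X$ near $\mathrm{Sing}(X_0)$ sharp enough that trajectories through points of $U$ neither reach the singular locus in finite parameter nor oscillate as $s \to 1$.
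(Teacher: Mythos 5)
Your proposal reconstructs the gradient-Hamiltonian-flow strategy from~\cite{HarKav}, which the present paper summarizes in Proposition~\ref{prop-grad-Hamiltonian}: build Ruan's vector field $V_\pi$ on $\X$ away from its bad locus, flow fibres towards $X_0$ while preserving symplectic forms, show that the flow extends continuously to a surjection $\phi\colon X \to X_0$ that is a symplectomorphism on a dense open $U$, and pull back the toric moment map on $X_0$. This is the same route, and your three ingredients line up with parts (1)--(3).

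Two small corrections are in order. Your surjectivity argument appeals to equality of dimensions, which does not suffice for a continuous (non-algebraic) map whose image merely contains a dense set; the correct input is compactness of $X$ (projective, hence compact), which makes $\phi(X)$ a closed subset of $X_0$ containing the dense set $U_0$. Also, the identification $U = \phi^{-1}\bigl(X_0 \setminus \mathrm{Sing}(X_0)\bigr)$ with $\phi|_U$ a symplectomorphism onto the entire smooth locus claims more than is needed or established: the obstruction to the flow is the larger set $Z = \mathrm{Sing}(\X) \cup \Crit(\Re\pi)$, and what~\cite{HarKav} actually proves (see Proposition~\ref{prop-grad-Hamiltonian}(c) and Lemma~\ref{lemma:ut-kt}(1) here) is that $\phi$ restricts to a symplectomorphism onto the open dense $\T_0$-orbit $U_0 = \mu_0^{-1}(\Delta_0^\circ)$, a possibly proper subset of the smooth locus. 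For the theorem this suffices, since any dense open $U$ will do; moreover this choice makes part (3) come out as an equality, $\mu^{-1}(\Delta^\circ) = \phi^{-1}(U_0) = U$. Apart from these points your outline matches the paper's approach.
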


The main result of the present manuscript extends the above result by
additionally working with the prequantum data. 
We first state one additional assumption on the family $\X$. Since $\P \cong \p^N$ is a standard projective space, there
is a complex torus $\T_\P \cong (\C^*)^N$ acting in the standard
fashion on $\P$. By assumption (b) above, the torus $\T_0$ acting on
$X_0$ extends to a linear action on $\P$, i.e. there is an inclusion
homomorphism $\iota: \T_0 \into \T_\P$ inducing the action of $\T_0$
on $\P$, and this action preserves $X_0 \subset \P$. Similarly there
is an inclusion (by abuse of notation also denoted $\iota$) of compact subgroups $\iota:
T_0 \cong (S^1)^n \into T_\P \cong (S^1)^N$. Let $\iota^*: \t_\P^* \to
\t_0^*$ denote the corresponding dual projection.
Let $\Delta_\P \subseteq \t_\P^*$
denote the moment polytope (i.e. the moment map image) of $\P$ associated to the Hamiltonian
action on $\P$ of $T_\P$, and let $\Delta_0 \subseteq \t_0^*$ denote the 
moment polytope of $X_0$ with respect to $\T_0$. 
We will make the following genericity assumption on $X_0$: 
\begin{itemize} 
\item[(e)] the special fiber $X_0 \subseteq \P$ of our toric
  degeneration is the closure of the $\T_0$-orbit through
  $[1:1:1:\cdots:1] \in \P$.
\end{itemize} 
From this it follows by standard Hamiltonian-geometry arguments that 
$\iota^*(\Delta_\P) = \Delta_0$. 
We now define 
\begin{equation}\label{eq:def W_0}
W_0 := \iota^*(\Delta_\P \cap \Z^N) \subseteq \Delta_0 \cap \Z^n.
\end{equation}

\begin{remark} 
It is not necessarily the case that $\iota^*(\Delta_\P \cap \Z^N)  =
\Delta_0 \cap \Z^n$ even if $\iota^*(\Delta_\P) = \Delta_0$, as can be
seen from the case when $X_0$ is the closure of the image
of the embedding $\C^* \to \p^2$ given by $t \mapsto [t^2:t^3:1]$.  
\end{remark}
We call an element of $W_0$ an \emph{interior point} if it is in the interior 
if $\Delta_0$, and a \emph{boundary point} if it is on the boundary of 
$\Delta_0$.
We will show in Proposition~\ref{prop:w0-bohr-s} that $W_0$ is
the Bohr-Sommerfeld set of the integrable system defined in
Theorem~\ref{theorem:HarKav}.

We can now state the main result of this
paper. 

\begin{theorem}\label{theorem:main}
Let $X$ be a smooth irreducible complex algebraic variety with
$\dim_\C(X)=n$. Suppose $X$ is equipped with prequantum data 
$(\omega, J, L, h, \nabla)$ as above. 
Let $\pi: \X \to \C$ be a toric degeneration of $X$ 
satisfying assumptions (a)-(e). 
Let $\mu: X \to \R^n$ denote the integrable system associated to the toric degeneration 
$\X$ as in Theorem~\ref{theorem:HarKav}. We
additionally assume the following properties hold. 
\begin{enumerate}
\item[(f)] The restriction map $H^0(\P, L_\P) \to H^0(X, L)$ (where we
  identify $X \cong X_1, L \cong L_1$) is surjective. 
\item[(g)] The restriction to the respective lattices $\iota^*: (\t_\P^*)_{\Z} \to
  (\t_0^*)_{\Z}$ of the dual projection is surjective. 
\item[(h)] The dimension of the space of holomorphic sections of $L \to X$ is the cardinality
  of $W_0$, i.e. $\dim_\C(H^0(X,L)) = \lvert W_0 \rvert$. 
\end{enumerate}
Then there exists a continuous one-parameter family
$\{J_s\}_{s \in [0, \infty)}$ of complex structures on the underlying
$C^\infty$-manifold of $X$ such that the following holds. 
\begin{itemize}
\item For $s=0$, the complex structure $J_0$ agrees with the original
  complex structure on $X$.
\item For each $s \in [0, \infty)$ the triple $(X, \omega, J_s)$ is K\"ahler and the Hermitian line bundle $(\L, h, \nabla)$ induces a 
holomorphic structure $\overline{\partial}^s$ on $\L$.
\item For each $s \in [0, \infty)$ there exists a basis $\{ \sigma_s^m \mid m \in W_0\}$
of $H^0(X, \L, \overline{\partial}^s)$ such that for all interior points $m \in W_0$ the section 
$\frac{\sigma_s^m}{\abs{\sigma_s^m}_{L^1(X)}}$ converges to a delta
function supported on the Bohr-Sommerfeld fiber $\mu^{-1}(m)$ in the
following sense: there exist a covariantly constant section $\delta_m$
of $(L, X, \nabla)_{\mu^{-1}(m)}$ and a measure $d\theta_m$ on
$\mu^{-1}(m)$ such that, for any smooth section $\tau$ of the dual
line bundle $L^*$ over $X$, we have: 
\begin{equation}\label{eq:equation in main theorem}
\lim_{s \to \infty} \int_X \left\langle \tau, \frac{\sigma^m_s}{\|
    \sigma^m_s \|_{L^1(X)}} \right \rangle d(vol) =
    \int_{\mu^{-1}(m)} \langle \tau, \delta_m \rangle
  d\theta_m
\end{equation}
where $\| \cdot \|_{L^1(X)}$ denotes the $L^1$-norm with respect to the symplectic volume.
\end{itemize}

\end{theorem}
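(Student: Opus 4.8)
The plan is to prove Theorem~\ref{theorem:main} in three stages: construct the family $\{J_s\}$, produce a continuously varying basis $\{\sigma^m_s\}_{m\in W_0}$ of $H^0(X,\L,\overline{\partial}^s)$, and then establish the concentration statement \eqref{eq:equation in main theorem} by a Laplace-type (stationary phase) argument.

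\textbf{Stage 1: the deformation of complex structures.} Following \cite{BFMN} and \cite{HamKon}, I would build $\{J_s\}$ by combining two mechanisms that are both already present in the proof of Theorem~\ref{theorem:HarKav}. Ruan's gradient-Hamiltonian flow, applied to $\pi\colon\X\to\C$ inside $\P\times\C$, carries the K\"ahler structure of $X\cong X_1$ toward that of the toric central fibre $X_0$; on the toric side, the imaginary-time Hamiltonian flow of $\mu^*H$ for a fixed strictly convex function $H$ on the polytope $\Delta_0$ is the Baier--Florentino--Mour\~ao--Nunes deformation of \cite{BFMN}. After a reparametrisation of time these assemble into a single continuous path $\{J_s\}_{s\in[0,\infty)}$ of complex structures on the fixed $C^\infty$-manifold underlying $X$, with $J_0$ the original structure. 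Since $\omega$ is never changed and each $J_s$ arises from a positive-time Hamiltonian/gradient flow, $(X,\omega,J_s)$ is K\"ahler, and the Chern connection data $(\L,h,\nabla)$ determines a holomorphic structure $\overline{\partial}^s$ on $\L$; this yields the first two bullet points, and is the content of Section~\ref{sec:variation}.

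\textbf{Stage 2: the basis.} For every $s$ the triple $(X,\omega,J_s)$ is K\"ahler with $c_1(\L)=[\omega]$ a positive class, so Kodaira vanishing gives $H^i(X,\L,\overline{\partial}^s)=0$ for $i>0$; hence $\dim_\C H^0(X,\L,\overline{\partial}^s)$ equals the deformation-invariant Euler characteristic, which by hypothesis~(h) is $\sabs{W_0}$ for all $s$. To name a basis: by~(f) every holomorphic section of $\L$ over $X$ lifts from $H^0(\P,L_\P)=H^0(\p^N,\mathcal{O}(1))$, which carries the monomial basis indexed by $\Delta_\P\cap\Z^N$ and is graded by the $\T_\P$-weights; restricting to $X$ and decomposing under $\T_0\hookrightarrow\T_\P$ (using~(g)), exactly as in \cite{HarKav}, produces $T_0$-weight sections of $\L$ over $X$ indexed by $W_0=\iota^*(\Delta_\P\cap\Z^N)$, which by the dimension count~(h) form a basis $\{\sigma^m_0\}_{m\in W_0}$. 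Applying the time-$s$ deformation of Stage~1 gives $\{\sigma^m_s\}_{m\in W_0}$; since the deformation induces a linear isomorphism of the holomorphic section spaces (multiplication by a nowhere-zero exponential factor on the dense orbit, composed with the flow), this remains a basis of $H^0(X,\L,\overline{\partial}^s)$ for every $s$, and it depends continuously on $s$.

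\textbf{Stage 3: concentration, and the main obstacle.} Fix an interior point $m\in W_0$. Since $m\in\Delta_0^\circ=\Delta^\circ$, Theorem~\ref{theorem:HarKav}(3) puts the entire fibre $\mu^{-1}(m)$ inside the open dense set $U\subset X$ on which $\mu=(F_1,\dots,F_n)$ generates a Hamiltonian $T^n$-action; on $U$ there are action-angle coordinates and $J_s$ restricts to the model BFMN deformation. The key computation is the pointwise Hermitian norm estimate $\abs{\sigma^m_s(x)}_h^2=C(x)\,e^{-s\,\varphi_m(\mu(x))}\bigl(1+o(1)\bigr)$ on $U$, where $\varphi_m\geq 0$ is smooth, vanishes exactly at $m$, and is strictly convex there with nondegenerate Hessian --- strict convexity of $H$ is precisely what forces this. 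Consequently both $\int_X\langle\tau,\sigma^m_s\rangle\,d(\mathrm{vol})$ and $\abs{\sigma^m_s}_{L^1(X)}$ localise, as $s\to\infty$, to a shrinking neighbourhood of $\mu^{-1}(m)$; applying Laplace's method in the $n$ action directions and keeping the integral over the angle directions gives the right-hand side of \eqref{eq:equation in main theorem}, with $\delta_m$ the unit-norm leafwise-flat section on $\mu^{-1}(m)$ (which exists because $m$ lies in the Bohr--Sommerfeld set $W_0$, Proposition~\ref{prop:w0-bohr-s}) and $d\theta_m$ the probability measure assembled from the Gaussian normalisation and the angle-coordinate volume. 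The hard part --- and the technical heart of the whole argument, as in \cite{BFMN,HamKon} --- is the uniform control of $\abs{\sigma^m_s}_h$ away from $\mu^{-1}(m)$, in particular over $\mu^{-1}(\partial\Delta_0)$ and over the preimage of the singular locus of $X_0$, where the action-angle/toric description degenerates: one must show the $J_s$ extend smoothly over this ``bad'' locus and that there $\abs{\sigma^m_s}_h\le e^{-cs}$ for some $c>0$, so that it is exponentially negligible against the polynomial-in-$s$ ($s^{-n/2}$) Laplace contribution from the peak. This is exactly where the embedding $\X\subseteq\P\times\C$ and hypotheses (a)--(e) are used, reducing the estimates to the projective-space model where they are known.
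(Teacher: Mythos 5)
Your Stage~1 conceals the main technical obstacle that the paper spends Section~\ref{sec:variation} on: the BFMN deformation $\chi_s^*J_\P$ of $\P$ does \emph{not} make $X_t\subset\P$ a complex submanifold, so the composite you describe does not directly produce a complex structure on $X$. The paper deals with this through the embeddings $\rho_{s,t}\colon X_t\hookrightarrow\P$ of Proposition~\ref{prop:embedding of submanifold}, which isotope $X_t$ inside $\P$ so that $\rho_{s,t}(X_t)$ \emph{is} $\chi_s^*J_\P$-complex while preserving the symplectic form, yielding the two-parameter family $J_{s,t}=(\chi_s\circ\rho_{s,t}\circ\phi_{1-t})^*J_\P$; one then sets $J_s:=J_{s,t(s)}$ for a carefully chosen $t(s)\to 0$, with the continuity lemmas (Propositions~\ref{prop:rho-smooth-in-t} and~\ref{prop:rho-tilde-smooth-in-t}, Lemmas~\ref{lemma:rho-st-close} and~\ref{last-bloody-technical-argument-left}) controlling the choice. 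Without this device, or an equivalent one, you do not have a family of complex structures on $X$ at all, so the remainder of the argument cannot start.

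Your Stage~2 claim that the pulled-back sections ``remain a basis of $H^0(X,\L,\overline\partial^s)$ for every $s$'' is not justified in the generality of Theorem~\ref{theorem:main}. The map $\tilde\rho_{s,t}^*\colon H^0(\P,L_\P)\to H^0(X_t,L_t)$ is restriction to a subvariety; it is surjective by~(f) but typically has a kernel, so the images $\{\sigma^m_{s,t}\}$ need not be independent a priori. The paper establishes linear independence only for $s$ sufficiently large (because the normalized supports become disjoint), matches $|W_0|$ with $\dim H^0$ via~(h), and for small $s$ extends the basis non-constructively following~\cite[Section~7.2]{HamKon}; all-$s$ independence is a genuinely separate statement (Theorem~\ref{thm-sections-lin-ind}) needing the Newton--Okounkov valuation structure, which is not available in Theorem~\ref{theorem:main}. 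Finally, your Stage~3 sketch is the right picture in spirit, but it attempts to re-derive the Laplace asymptotics directly on $X$, whereas the paper transfers the known BFMN/Hamilton--Konno estimates on the \emph{toric} fibre $X_0$ to $X$ through the gradient-Hamiltonian flow and the $\rho_{s,t}$, using compactness and uniform-continuity arguments (Propositions~\ref{prop:support on fibers} and~\ref{proposition:interior}); your proposed pointwise $e^{-cs}$ control over the bad locus asks for more than the paper proves, which only requires an $L^1$ localisation supplied by the transfer.
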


\begin{remark}\label{remark:motivation}
The essential idea behind Theorem~\ref{theorem:main} is a 
construction due to Baier, Florentino, Mourao, and Nunes \cite{BFMN}
of a varying set $\{\chi_s\}_{s \in [0,\infty)}$ (to be defined 
in Section~\ref{sec:variation}) of diffeomorphisms of the
underlying smooth manifold of the ambient projective space $\P$ which
is designed to have certain convergence properties. Specifically, let
$\{\sigma^m\}_{m \in \Delta_{\P} \cap \Z^n}$ denote the natural basis
  of $H^0(\P, L_\P)$ already mentioned above (see
  e.g. \cite{Ham-toric}) with respect to the original complex
  structure. In \cite{BFMN} the authors construct the diffeomorphisms
  $\chi_s$ precisely so that a pullback $\sigma^m_s$ of $\sigma^m$,
  defined using the $\chi_s$ at time $s$, has the form (for large
  enough $s$) of a ``bell curve'' centred at the Bohr-Sommerfeld fiber
  $\mu_{\P}^{-1}(m)$ and, as $s \to \infty$ and with appropriate
  normalizations, the bell curve gets narrower and narrower, thus
  converging to a dirac-delta distribution supported on the
  Bohr-Sommerfeld fiber. The bulk of the arguments in the present
  paper are devoted to taking this fundamental construction of
  \cite{BFMN} for the projective space $\P$ and making the necessary
  adjustments to apply them to our more general situation. We rely
  heavily on \cite{HamKon}, which already worked out some of the necessary
  steps for the case of the full flag variety. 
\end{remark}

\begin{remark}
  \begin{itemize}
  \item For $m\in W_0$ a boundary point, we are confident that 
    similar arguments will show that 
    the support of the section $\sigma^m_s$ localizes around the 
    Bohr-Sommerfeld fiber $\mu^{-1}(m)$;
    doing so in this paper, however, would require including many more 
    details of the constructions in~\cite{BFMN} and~\cite{HamKon}
    than we felt was desirable. 
    On the other hand, for the full statement of the convergence of sections,
    we do not have a sufficiently concrete topological
    description of the fiber
    $\mu^{-1}(m)$ to construct an analogue of the measure $d\theta_m$ 
    for fibres over boundary points of $W_0$.

  \item The normalization factor $\lVert \sigma^m_s \rVert_{L^1(X)}$
    in~\eqref{eq:equation in main theorem} 
   guarantees that the ``area under the bell curve'' mentioned in
   Remark~\ref{remark:motivation} is always
  equal to $1$ as $s$ varies. 

\item There are different versions of convergence in functional
  analysis, and the notion used in Theorem~\ref{theorem:main} is called ``weak
  convergence''. In particular, note that our convergence assertion is not
  uniform in the space of test sections $\tau$. 

\end{itemize} 
\end{remark}

As mentioned in the introduction, the purpose of Sections~\ref{sec:variation} and~\ref{sec:proof} is to prove 
Theorem~\ref{theorem:main}. We show that the theory of Newton-Okounkov
bodies and their associated toric degenerations provides a large class
of examples satisfying the hypotheses of Theorem~\ref{theorem:main} in
Section~\ref{sec:NOBY}.

\section{Variation of complex structures and bases of
  holomorphic sections}\label{sec:variation}

In order to prove Theorem~\ref{theorem:main} we rely on work of
Guillemin and Abreu \cite{Abreu, Guill-book, Guill-K} and, more recently, of Baier, Florentino, Mourao,
and Nunes \cite{BFMN}. Moreover, the first author and Konno
\cite{HamKon} have results similar to our Theorem~\ref{theorem:main}
for the special case of flag manifolds and its Gel'fand-Tsetlin
integrable system. In this section we recall the relevant background
and establish the preliminary results required to prove the results in
our (more general) case. 

\subsection{The gradient-Hamiltonian flow}

Let $X$ be a smooth, irreducible complex algebraic variety and
$\pi: \X \to \C$ be a toric degeneration of $X$ satisfying 
assumptions (a)-(d) as above. We equip (the smooth locus
of) $\X$ with the K\"ahler form $\omega_\X := \Omega \vert_{\X}$ as in
Section~\ref{sec-main-result}. 
The
proof of our main result will use the gradient-Hamiltonian techniques of \cite{HarKav}
which we now briefly recall.

Following Ruan \cite{Ruan}, we define the
gradient-Hamiltonian vector field corresponding to $\pi$ on
the smooth locus $\X_{smooth}$ of $\X$ as follows. Let $\nabla(\Re(\pi))$ denote the gradient
vector field on $\X_{smooth}$ associated to the real part $\Re(\pi)$, with respect to
the K\"ahler metric $\omega_\X$. Since $\omega_\X$ is K\"ahler and $\pi$ is holomorphic,
the Cauchy-Riemann equations imply that $\nabla(\Re(\pi))$ is related
to the Hamiltonian vector field 
$\xi_{\Im(\pi)}$ of the imaginary part $\Im(\pi)$ with respect to $\omega_\X$ by 
\begin{equation}\label{gradient Re pi and Hamiltonian Im pi}
\nabla(\Re(\pi)) = - \xi_{\Im(\pi)}.
\end{equation}
Let $Z$ denote the closed subset of $\X$ which is the union of the singular locus of $\X$ 
and the critical set of $\Re(\pi)$, i.e. the set on which $\nabla(\Re(\pi)) = 0$.  
The \emph{gradient-Hamiltonian vector field} $V_\pi$, which is defined only on the open set $\X \setminus Z$,
is by definition 
\begin{equation}\label{def-grad-Hamiltonian}
V_\pi := - \frac{\nabla(\Re(\pi))}{\|\nabla(\Re(\pi))\|^2}.  
\end{equation}
Where defined, $V_\pi$ is smooth.
For $t \in \R_{\geq 0}$ let $\phi_t$ denote the time-$t$ flow
corresponding to $V_\pi$. Note that since $V_\pi$ may not be complete, $\phi_t$ for a given $t$ is not
necessarily defined on all of $\X \setminus Z$; this issue is dealt
with in the next proposition. 

The gradient-Hamiltonian flow is the tool which allows us to relate
the geometry of different fibres of the toric degeneration. 
Recall that $X_t$ denotes the fiber $\pi^{-1}(t)$ and that we often
identify $X_1$ with the original variety $X$ using the isomorphism
$\varrho_1$ above. 
We now record some facts, which hold under our
assumptions, assembled from \cite[Sections 2-4]{HarKav} and which are
also used in the proof of \cite[Theorem (A)]{HarKav}.  

\begin{proposition}\label{prop-grad-Hamiltonian}
In the setting above, we have the following. 
\begin{itemize}
\item[(a)] Let $s,t \in \R$ with $s \geq t > 0$. Where defined, the flow $\phi_t$ takes $X_s \cap (\X \setminus Z)$ to
$X_{s-t}$. In particular, where defined, $\phi_t$ takes a point $x \in
X_t$ to a point in the fiber $X_0$. Moreover, 
for $s>t>0$, the map $\phi_t$ is defined on all of $X_s$ and it is a
diffeomorphism from $X_s$ to $X_{s-t}$. 
\item[(b)] Where defined, the flow $\phi_t$ preserves the symplectic
  structures, i.e., if $x \in X_z \cap (\X \setminus Z)$ is a point
  where $\phi_t(x)$ is defined, then
  $\phi_t^*(\omega_{z-t})_{\phi_t(x)} = (\omega_z)_x$. In particular,
  for $s>t>0$, the map $\phi_t$ is a symplectomorphism between $X_s$
  and $X_{s-t}$. 
\item[(c)] For $s=t$, there exists an open dense subset $U_t=U_s$ of
  $X_t$ and an open dense subset $U_0 \subset X_0$ in the smooth locus
  of $X_0$ such that $\phi_t$ is a symplectomorphism from $U_t$ to
  $U_0$. Moreover, $\phi_t$ extends continuously to a map $\phi_t: X_t
  \to X_0$. 
\end{itemize}
\end{proposition}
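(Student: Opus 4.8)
The plan is to follow \cite[Sections~2--4]{HarKav}, which in turn adapt Ruan \cite{Ruan}; most of the work consists of assembling and lightly re-deriving statements already in the literature, so I only indicate the key steps.

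For part~(a), I would first record how $\pi$ varies along the flow of $V_\pi$. Since $d(\Re(\pi))(\nabla(\Re(\pi))) = \|\nabla(\Re(\pi))\|^2$, the definition~\eqref{def-grad-Hamiltonian} gives $d(\Re(\pi))(V_\pi) = -1$ wherever $V_\pi$ is defined, while~\eqref{gradient Re pi and Hamiltonian Im pi} together with $d(\Im(\pi))(\xi_{\Im(\pi)}) = 0$ gives $d(\Im(\pi))(V_\pi) = 0$. Hence $\frac{d}{d\tau}\pi(\phi_\tau(x)) = -1$, so a flow line starting on $X_s = \pi^{-1}(s)$ lies on $X_{s-\tau}$ at time $\tau$; this already yields $\phi_t(X_s \cap (\X\setminus Z)) \subseteq X_{s-t}$ whenever $\phi_t$ is defined. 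To get that $\phi_t$ is defined on all of $X_s$ and is a diffeomorphism onto $X_{s-t}$ when $s > t > 0$, I would use two facts: the family is a trivial fiber bundle over $\C^*$, so $\pi$ is a holomorphic submersion on $\pi^{-1}(\C^*)$ and therefore $Z \cap \pi^{-1}(\C^*) = \emptyset$ (so $V_\pi$ is smooth there); and $\pi$ is proper (being the restriction to a closed subvariety of the projection $\P \times \C \to \C$, with $\P$ projective), so $\pi^{-1}([s-t,s])$ is compact. A flow line issuing from $X_s$ keeps its $\pi$-value in $[s-t,s] \subset \C^*$ and so cannot leave this compact set or meet $Z$ before time $t$; hence $\phi_t$ is defined on $X_s$, and running the flow of $-V_\pi$ in the same region shows $\phi_{-t}$ is defined on $X_{s-t}$ and inverts it.

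For part~(b), I would invoke the Cartan-calculus argument of Ruan. Since $\omega_\X$ is closed, $\mathcal{L}_{V_\pi}\omega_\X = d\,\iota_{V_\pi}\omega_\X$, and by~\eqref{gradient Re pi and Hamiltonian Im pi} one computes $\iota_{V_\pi}\omega_\X = \|\nabla(\Re(\pi))\|^{-2}\, d(\Im(\pi))$, so $\mathcal{L}_{V_\pi}\omega_\X = d\big(\|\nabla(\Re(\pi))\|^{-2}\big)\wedge d(\Im(\pi))$. Restricting to any fiber $X_z$, on which $\Im(\pi)$ is constant, kills the factor $d(\Im(\pi))$, so the restriction of $\mathcal{L}_{V_\pi}\omega_\X$ to $X_z$ vanishes. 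A standard manipulation — writing $\frac{d}{d\tau}\big(\psi_\tau^*(\omega_\X|_{X_{z-\tau}})\big) = \psi_\tau^*\big(\mathcal{L}_{V_\pi}\omega_\X|_{X_{z-\tau}}\big) = 0$, where $\psi_\tau$ is the restricted flow $X_z \to X_{z-\tau}$ — then gives $\phi_t^*(\omega_{z-t}) = \omega_z$ on the locus where $\phi_t$ is defined, and combining with part~(a) gives the symplectomorphism $X_s \to X_{s-t}$ for $s > t > 0$.

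For part~(c), the case $s=t$, the argument of parts~(a)--(b) breaks down along the singular locus of $X_0$ and the critical set of $\Re(\pi)$ inside $X_0$. Following \cite{HarKav}, I would take $U_t$ to be the set of points of $X_t$ whose forward flow line avoids $Z$ up to time $t$ and reaches a smooth point of $X_0$; on $U_t$, parts~(a)--(b) already show $\phi_t$ is a symplectomorphism onto an open dense $U_0 \subset X_0^{\mathrm{smooth}}$, and a dimension/properness count shows $U_t$ is open and dense. The remaining point — the continuous extension of $\phi_t$ to all of $X_t$, with image all of $X_0$ — is where the embedding into $\P \times \C$ is essential: every flow line lies inside the compact set $\pi^{-1}([0,t])$, and one shows, as in \cite{HarKav}, that $\phi_\tau(x)$ has a well-defined limit in $X_0$ as $\tau \uparrow t$ depending continuously on $x$, and that every point of $X_0$ arises this way. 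This last step is the main obstacle: proving that the flow lines genuinely converge to points of the singular fiber (rather than escaping or failing to limit) and that the resulting map is continuous and surjective is the delicate part, whereas parts~(a) and~(b) are routine consequences of the Cauchy--Riemann equations and the triviality of the family over $\C^*$.
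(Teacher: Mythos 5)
Your proposal is correct and follows essentially the route used in the cited reference \cite{HarKav} (and, behind it, Ruan \cite{Ruan}); the paper itself gives no proof of this proposition, merely assembling it from \cite[Sections~2--4]{HarKav}, so there is nothing in the text for your argument to diverge from. The computations $d(\Re\pi)(V_\pi)=-1$, $d(\Im\pi)(V_\pi)=0$, the Cartan-formula argument that $\mathcal{L}_{V_\pi}\omega_\X$ restricts to zero on each fiber, and the properness argument (using that $\X$ is closed in $\P\times\C$ with $\P$ compact, and that $Z\subseteq X_0$ because the family is trivial and $X$ is smooth over $\C^*$) are all standard and correctly executed; your discussion of~(c), while a sketch, correctly isolates the continuous extension to the singular fiber as the genuinely delicate step, which in \cite{HarKav} is handled by a careful analysis of flow-line limits inside the compact set $\pi^{-1}([0,t])$.
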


Using the gradient-Hamiltonian flows, for each $0<t\leq 1$ 
we construct an integrable $\mu_t \colon X_t \to \R^n$ 
by pulling back the standard integrable system 
$\mu_0: X_0 \to \R^n$ 
(arising from the structure of $X_0$ as a toric variety) 
on $X_0$ through the maps $\phi_t$ \cite[Theorem
5.2]{HarKav}.  
More precisely, we define 
\begin{equation}\label{eq:definition mu_t}
\mu_t := \phi_t^* \mu_0: X_t \to \R^n.
\end{equation}
As a result, 
the moment map image $\mu_t(X_t)$ for each $0 < t \leq 1$ is equal to the
moment map image $\Delta_0 := \mu_0(X_0)$ of the toric variety $X_0$.
In particular, we have an integrable system 
$\mu: X \to \R^n$ on $X\cong X_1$
whose image is $\Delta_0$.  For further details we refer to
\cite{HarKav}. 

In what follows it will sometimes be useful to refer to the
families of $X_t$ and $U_t$ with $t\in [0,1]$, and so we define
\begin{equation}\label{eq:X-and-U-fam}
  \begin{split}
\X_{[0,1]} &:= \pi\inv\bigl([0,1]\bigr)\subset \X \\
\mathcal{U}_{[0,1]} & := \{ x \in \X_{[0,1]} \st x \in U_{\pi(x)}\}.
  \end{split}
\end{equation}

\subsection{The varying complex structure}\label{ss:varying-cplx-str}

We now define a family $\{J_{s,t}\}$ of complex structures on $X \cong
X_1$ where $s, t$ are real parameters with $0 \leq s < \infty$ and $0
< t \leq 1$. In Section~\ref{sec:proof} 
we will choose an appropriate continuous function $t = t(s)$ of $s$
and thus define a $1$-parameter family $\{J_s = J_{s,t(s)}\}$ of
complex structures which will satisfy the properties asserted in our
Theorem~\ref{theorem:main}. 

For details we refer to \cite{BFMN, HamKon} but we
briefly set some notation.
Recall that $\P \cong \p^N$ is a
standard projective space. By slight abuse of notation we denote also
by $\P$ the underlying smooth manifold.
Since the usual
projective space $\p^N$ is naturally a K\"ahler manifold with K\"ahler
structure $(\P, \omega_\P, J_\P)$, 
we may consider $\P$ as a symplectic manifold
$(\P, \omega_\P)$ 
or as a complex manifold $(\P, J_\P)$. 
In \cite{BFMN} the authors construct a family of diffeomorphisms
\begin{equation}\label{eq:chi_s}
\chi_s: \P \to \P
\end{equation} 
for $s \in \R$ with 
$0 \leq s < \infty$ which satisfy the following:
\begin{itemize}
\item[($\chi$-1)] $\chi_0: \P \to \P$ is the identity function, and

\item[($\chi$-2)] for any $s$ with $0 \leq s < \infty$, the triple
  $(\P, \omega_\P, \chi_s^*(J_\P))$ is a K\"ahler structure on $\P$.
\end{itemize} 
The family $\{\chi_s\}$ in~\eqref{eq:chi_s} satisying
($\chi$-1) and ($\chi$-2) is not
uniquely determined; the general construction given in \cite{BFMN} could yield
many such choices of $\{\chi_s\}$. 
We interpret the diffeomorphisms $\chi_s$ as giving rise to a one-parameter family of K\"ahler structures on $\P$ with respect to the same symplectic structure but with varying complex structure.

In this paper, we wish to use the varying complex structures
$\chi_s^* J_\P$ on $\P$ to define a family of complex structures
on $X$. However, this is not completely
straightforward because a smooth submanifold 
$X$ of $\P$
may be a complex submanifold of $\P$ for the original
complex structure $J_\P$ 
but may \emph{not} be a complex submanifold of
$\P$ equipped with the altered complex structure $\chi_s^*J_\P$. 
To address this issue, the first author and Konno
prove the following \cite[Proposition 6.1]{HamKon}. 
 
\begin{proposition}\label{prop:embedding of submanifold}
(\cite[Proposition 6.1]{HamKon}) 
Let $V$ be a smooth submanifold of $\P$ with associated embedding $\rho: V
\into \P$. Assume that $V$ is a complex 
submanifold of $\P$ with respect to the complex
  structure $J_\P$ and let 
  $\omega_V := \rho^*(\omega_\P)$ denote the corresponding K\"ahler
  form on $V$. Let $\chi_s: \P \to \P$ for $s$ a real parameter, $0 \leq s <
  \infty$, be a family of diffeomorphisms as in~\eqref{eq:chi_s}
  satisfying ($\chi$-1) and ($\chi$-2). Then 
 there exists a family $\{\rho_s\}_{0 \leq s < \infty}$ of embeddings
 $\rho_s: V \into \P$ such that 
\begin{enumerate} 
\item[(a)] for all $s$ with $0 \leq s < \infty$ we have
  $\rho_s^*\omega_\P \vert_V = \omega_V$, 
\item[(b)] for all $s$ with $0 \leq s < \infty$ the image $\rho_s(V)
  \subset \P$ is a complex submanifold of $(\P, \chi_s^*(J_\P))$, and 
\item[(c)] $\rho_0 = \rho$. 
\end{enumerate}
In particular, for any $s$ with $0 \leq s < \infty$ the pair $\left(\omega_V, \rho_s^*\left(\chi_s^*(J_\P) \vert_{\rho_s(V)}\right)\right)$ is a K\"ahler structure on $V$. 
Furthermore, for each choice of family $\{\chi_s\}$ as
in~\eqref{eq:chi_s}, the family of embeddings $\{\rho_s\}$ satisfying the
above conditions is unique. 
 
\end{proposition}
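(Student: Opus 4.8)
The plan is to reduce the statement to the problem of transporting complex structures along an ambient isotopy, exactly in the spirit of Moser's method. First I would observe that the key difficulty is the one flagged in the paragraph preceding the proposition: the submanifold $V$ is $J_\P$-holomorphic, but $\chi_s(V)$ need not be $\chi_s^*(J_\P)$-holomorphic — rather, it is $\chi_s(V)$ that is $\chi_s^*$-transported in the wrong way. The right object to track is the image $\chi_s(V) \subseteq \P$: since $\chi_s$ is a diffeomorphism and $V$ is $J_\P$-holomorphic, $\chi_s(V)$ is automatically a complex submanifold of $(\P, J_\P)$ \emph{if} we push forward $J_\P$, i.e. $\chi_s(V)$ is a complex submanifold of $(\P, (\chi_s)_* J_\P)$. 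But $(\chi_s)_* J_\P$ is generally not $\chi_s^* J_\P$. So the plan is not to use $\chi_s(V)$ directly but to correct it: I want to find, for each $s$, a further diffeomorphism or a modified embedding so that the image genuinely is $\chi_s^*(J_\P)$-holomorphic while the pulled-back K\"ahler form is still $\omega_V$.

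The cleaner route, and the one I expect the authors take, is via a Moser-type argument internal to $\P$. Both $\omega_\P$ and $\chi_s^*(\omega_\P)$ are K\"ahler forms on $\P$; but by ($\chi$-2), $(\P, \omega_\P, \chi_s^*(J_\P))$ is K\"ahler, meaning $\chi_s^*(J_\P)$ is compatible with the \emph{fixed} $\omega_\P$, not with $\chi_s^*\omega_\P$. In particular $\chi_s$ need not be a symplectomorphism of $\omega_\P$. I would consider instead the family of symplectic forms $\omega_\P$ and $\chi_s^*\omega_\P$ on $\P$: they lie in the same cohomology class (as $\chi_s$ is isotopic to the identity, being a continuous family starting at $\mathrm{id}$ by ($\chi$-1)), so Moser's theorem gives a family of diffeomorphisms $\psi_s: \P \to \P$ with $\psi_s^*(\chi_s^*\omega_\P) = \omega_\P$ and $\psi_0 = \mathrm{id}$, depending smoothly on $s$. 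Then set $\rho_s := \chi_s^{-1} \circ \psi_s \circ \rho$ — or rather, I need to be careful about direction: I would take $\rho_s = \chi_s \circ \psi_s^{-1} \circ \rho$, check that $\rho_s(V) = \chi_s(\psi_s^{-1}(V))$, compute $\rho_s^*\omega_\P = \rho^* (\psi_s^{-1})^* \chi_s^* \omega_\P = \rho^*(\psi_s^{-1})^*(\psi_s^*\omega_\P)$... this needs the Moser diffeomorphism arranged so that the composition lands $\omega_V$ back. The bookkeeping of which map pulls back which form is the routine-but-delicate part; the conceptual content is: Moser corrects the symplectic discrepancy, and holomorphicity of the image is then automatic because $\chi_s$ carries $J_\P$-complex submanifolds to $(\chi_s)_*J_\P = $ — here one must check the identity $(\chi_s)_* J_\P$ versus $\chi_s^* J_\P$, which are inverse-transport, so in fact the image of a $J_\P$-holomorphic $V$ under $\chi_s$ is holomorphic for $\chi_s^* J_\P$ precisely when... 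I realize the sign/direction of the transport is exactly the crux and must be pinned down at the outset by writing $\chi_s^* J_\P = d\chi_s^{-1} \circ (J_\P \circ \chi_s) \circ d\chi_s$ and checking that $\chi_s^{-1}(W)$ is $\chi_s^*J_\P$-holomorphic whenever $W$ is $J_\P$-holomorphic.

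The main obstacle, therefore, will be \textbf{establishing condition (a)}, $\rho_s^*\omega_\P|_V = \omega_V$, \emph{simultaneously} with condition (b). Conditions (b) and (c) are essentially formal once the direction of complex-structure transport is fixed: (b) holds because a diffeomorphism carries complex submanifolds of the source complex structure to complex submanifolds of the transported complex structure, and (c) is the normalization $\chi_0 = \mathrm{id}$, $\psi_0 = \mathrm{id}$. But (a) requires the Moser correction to be set up so that the \emph{net} effect of $\chi_s$ followed by the correction is symplectically trivial on $V$; one must check that the Moser vector field can be integrated on all of compact $\P$ (it can, since $\P$ is compact), that the resulting family is smooth in $s$, and — the subtle point — that restricting to $V$ and pulling back gives exactly $\omega_V$ and not merely a cohomologous form. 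Finally, \textbf{uniqueness} I would handle by a separate short argument: if $\rho_s$ and $\rho_s'$ both satisfy (a)–(c), then $\rho_s' \circ \rho_s^{-1}$ (defined on the image, or an ambient extension thereof) is a biholomorphism for $\chi_s^*(J_\P)$ that pulls $\omega_\P$ back to $\omega_\P$ and equals the identity at $s=0$; a connectedness/rigidity argument in $s$, using that the automorphism group of a compact K\"ahler manifold preserving the K\"ahler form is a compact Lie group with the isometry-at-$s=0$ being the identity, then forces $\rho_s = \rho_s'$ for all $s$. I expect the authors to simply cite \cite{HamKon} for the whole proposition, so the proof here would be a pointer to \cite[Proposition 6.1]{HamKon} together with the above Moser sketch as the essential idea.
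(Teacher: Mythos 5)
Your proposal has a genuine gap, and it sits exactly where you predicted ``(b) and (c) are essentially formal.'' They are not, with your choice of $\rho_s$. If you define $\rho_s = \chi_s^{-1}\circ\psi_s\circ\rho$ (or the other composition order), the image is $\chi_s^{-1}(\psi_s(V))$. One can check directly that $\chi_s^{-1}(W)$ is $\chi_s^*J_\P$-holomorphic if and only if $W$ is $J_\P$-holomorphic, so for (b) you need $\psi_s(V)$ to be a $J_\P$-complex submanifold. But $\psi_s$ is produced by an \emph{ambient} Moser argument (interpolating $\omega_\P$ and $\chi_s^*\omega_\P$ in $\P$), and a Moser diffeomorphism of $\P$ is neither holomorphic nor tangent to $V$; it will in general carry $V$ off of itself and off the class of $J_\P$-complex submanifolds. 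So the holomorphicity of the image, which you treat as automatic, fails for your construction. The issue is precisely the direction-of-transport bookkeeping you flagged but did not pin down.

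The proof actually used (recalled in the paper from \cite[Proposition 6.1]{HamKon}, and reproduced in the proof of Proposition~\ref{prop:rho-smooth-in-t}) resolves this by doing the Moser-type correction \emph{inside} $V$ rather than on the ambient $\P$. One sets $\psi_s=\chi_0\circ\chi_s^{-1}$, $\omega_s=(\chi_s^{-1})^*\omega_\P$ (which by ($\chi$-2) is K\"ahler for $J_\P$, hence restricts to a symplectic form on $V$), and defines a time-dependent vector field $Y_s$ \emph{on $V$} by a Moser formula $\iota_{Y_s}(\omega_s\vert_V)=-\psi_s^*(\iota_{\mathbb{V}_s}\omega_\P)\vert_V$, where $\mathbb{V}_s$ generates $\psi_s$. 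Letting $\varphi_s$ be the flow of $Y_s$ on $V$, the embedding is $\rho_s=\chi_s^{-1}\circ\rho\circ\varphi_s$ (using $\chi_0=\id$). Since $\varphi_s$ is a self-diffeomorphism of $V$, the image is $\rho_s(V)=\chi_s^{-1}(V)$ for every $s$: it never changes, so its $\chi_s^*J_\P$-holomorphicity is genuinely automatic, while the flow $\varphi_s$ only reparametrizes the embedding so as to correct the pullback and achieve $\rho_s^*\omega_\P=\omega_V$. This is the structural point your sketch misses: the symplectic correction has to be internal to $V$ so that the holomorphicity of the image is not disturbed.
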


Each fiber $X_t$ (for $0 < t \leq 1$) of our family $\X$ is a complex submanifold of $\P$
with respect to the original complex structure $J_\P$.  Hence, applying
Proposition~\ref{prop:embedding of submanifold} to each $X_t$, we obtain embeddings 
\begin{equation}\label{def:rho s t} 
\rho_{s,t}: X_t \to \P
\end{equation}
where $s, t$ are real parameters with $0 \leq s < \infty$ and $0 <
t \leq 1$. We can now define a family
$\{J_{s,t}\}$ of complex structures on $X=X_1$. 
We have the following key diagram (note that it is not commutative): 
\begin{equation}\label{eq:key diagram} 
\xymatrix{ 
& \rho_{s,t}(X_t) \ar[r]^{\subset} & (\P,\omega_\P) \ar[r]^{\chi_s} & (\P,J_\P) \\
X=X_1 \ar[r]^{\phi_{1-t}} & X_t \ar[u]^{\rho_{s,t}} \ar[r]^{\phi_t} & X_0 \ar[u]  & \\
}
\end{equation}
and we have the following. 

\begin{definition}
Let $s,t \in \R$ with $0 \leq s < \infty$ and $0 < t \leq
1$. Let $\phi_{1-t}: X \cong X_1 \to X_t$ 
be the gradient-Hamiltonian flow and let $\{\chi_s\}$ be a choice of
diffeomorphisms as in~\eqref{eq:chi_s} and
$\rho_{s,t}: X_t \into \P$ 
be the corresponding embeddings in~\eqref{def:rho s t}.
The complex structure $J_{s,t}$ on $X$ is then defined by 
\begin{equation}\label{eq:definition Jst}
J_{s,t} ;= (\rho_{s,t} \circ \phi_{1-t})^*(\chi_s^* J_\P
\vert_{\rho_{s,t}(X_t)})
\end{equation}
Equivalently, $J_{s,t}$ is the pullback
$(\chi_s \circ \rho_{s,t} \circ \phi_{1-t})^* J_\P$. 
\end{definition}

Since both $\rho_{s,t}$ and $\phi_{1-t}$ behave 
well with respect to the symplectic structures
(Propositions~\ref{prop-grad-Hamiltonian}(b) and~\ref{prop:embedding
  of submanifold}(a)), the
following is immediate.

\begin{lemma}\label{lemma:pairs Kahler}
Let $s, t \in \R$ with $0 \leq s < \infty$ and $0 < t \leq 1$. Then 
the triple $(X\cong X_1, \omega = \omega_1, J_{s,t})$
  is K\"ahler. 
\end{lemma}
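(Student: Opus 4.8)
The plan is to verify that the pair $(\omega, J_{s,t})$ satisfies the three defining properties of a Kähler structure on $X \cong X_1$: (i) $\omega$ is a symplectic form (which we already have), (ii) $J_{s,t}$ is an almost complex structure compatible with $\omega$, meaning $\omega(\cdot, J_{s,t}\cdot)$ is a positive-definite symmetric bilinear form, and (iii) $J_{s,t}$ is integrable. The key observation is that $J_{s,t}$ is, by its very definition in~\eqref{eq:definition Jst}, a \emph{pullback} of the integrable complex structure $\chi_s^* J_\P$ along the composition $\chi_s \circ \rho_{s,t} \circ \phi_{1-t}$, and this composition is a diffeomorphism onto its image (each factor is: $\phi_{1-t}$ is a diffeomorphism $X_1 \to X_t$ by Proposition~\ref{prop-grad-Hamiltonian}(a), $\rho_{s,t}$ is an embedding by Proposition~\ref{prop:embedding of submanifold}, and $\chi_s$ is a diffeomorphism of $\P$). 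A pullback of an integrable complex structure along a diffeomorphism is again integrable, so (iii) is automatic.

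For the compatibility condition (ii), the strategy is to track the symplectic forms through the same diagram. By Proposition~\ref{prop:embedding of submanifold}(a), $\rho_{s,t}^*(\omega_\P\vert_{\rho_{s,t}(X_t)}) = \omega_t$, the restriction of $\Omega$ to $X_t$; and by Proposition~\ref{prop-grad-Hamiltonian}(b), $\phi_{1-t}^*(\omega_t) = \omega_1 = \omega$ (here one should be slightly careful that $\phi_{1-t}$ maps $X_1$ to $X_t$ and is a genuine symplectomorphism between them, not just on a dense subset — this is exactly the content of the "for $s > t > 0$" clause of Proposition~\ref{prop-grad-Hamiltonian}(b), applied with the roles set so that $\phi_{1-t}: X_1 \to X_t$ for $0 < t < 1$, with the boundary case $t=1$ giving $\phi_0 = \mathrm{id}$). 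Meanwhile, the last clause of Proposition~\ref{prop:embedding of submanifold} states that the pair $\bigl(\omega_{X_t}, \rho_{s,t}^*(\chi_s^* J_\P\vert_{\rho_{s,t}(X_t)})\bigr)$ is already a Kähler structure on $X_t$. Pulling this Kähler structure back via the symplectomorphism $\phi_{1-t}$ then yields a Kähler structure on $X_1$ whose symplectic form is $\omega$ and whose complex structure is precisely $J_{s,t}$.

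Thus the proof amounts to the simple principle that a Kähler structure pulls back to a Kähler structure under a symplectomorphism, combined with bookkeeping of which maps are symplectomorphisms on the nose versus only on dense open subsets. I would phrase it as: apply Proposition~\ref{prop:embedding of submanifold} to $V = X_t$ to get that $(\omega_{X_t}, \tilde J_{s,t})$ is Kähler on $X_t$, where $\tilde J_{s,t} = \rho_{s,t}^*(\chi_s^* J_\P\vert_{\rho_{s,t}(X_t)})$; observe $\phi_{1-t}: X_1 \to X_t$ is a symplectomorphism (from $\omega_1$ to $\omega_t = \omega_{X_t}$) by Proposition~\ref{prop-grad-Hamiltonian}(b); and conclude that $\phi_{1-t}^*$ of a Kähler structure is Kähler, with $\phi_{1-t}^* \tilde J_{s,t} = J_{s,t}$ by~\eqref{eq:definition Jst}.

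The only genuine subtlety — and the "main obstacle" such as it is — is the boundary behaviour in the parameter $t$: Proposition~\ref{prop-grad-Hamiltonian}(b) gives a clean global symplectomorphism $X_s \to X_{s-t}$ only in the range $s > t > 0$, whereas we need it for $\phi_{1-t}: X_1 \to X_t$ with $0 < t \leq 1$, i.e. $s = 1$ and $1 - t$ playing the role of "$t$", so we need $1 > 1-t > 0$ (fine for $0 < t < 1$) and the degenerate endpoint $t = 1$ where $\phi_0 = \mathrm{id}$ is trivially a symplectomorphism. So in fact the full range $0 < t \le 1$ is covered, and there is no real gap; one just needs to state this carefully. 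Everything else is formal, so this lemma should be short.
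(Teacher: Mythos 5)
Your argument is correct and is essentially the paper's own proof, which simply cites Propositions~\ref{prop-grad-Hamiltonian}(b) and~\ref{prop:embedding of submanifold}(a) and calls the conclusion immediate; you have merely unpacked the standard facts (a diffeomorphism pulls an integrable complex structure back to an integrable one, and a symplectomorphism pulls a Kähler structure back to a Kähler structure) and handled the endpoint $t=1$ via $\phi_0 = \mathrm{id}$, which the paper leaves implicit.
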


In what follows, we will need two further properties of
these embeddings $\rho_{s,t}$ and complex structures $J_{s,t}$, the
first of which requires some additional hypotheses on the family $\{\chi_s\}$, as
we now explain. 
As mentioned above, the family $\{\chi_s\}$ given in~\eqref{eq:chi_s}
is not unique. However, when $\P$ is equipped with a complex torus action and
$V$ happens to be the closure in $\P$ of a torus orbit, then it turns out
that the family $\{\chi_s\}$ can be chosen in such a way that $V$
remains a complex submanifold for \emph{all} of the complex structures
$\chi_s^* J_\P$, not just the original complex structure $J_\P$. 
Before proceeding it should be noted that although the statement of
Proposition~\ref{prop:embedding of submanifold} (equivalently
\cite[Proposition 6.5]{HamKon}) contains the hypothesis that $V$ is
smooth, it is shown in the proof of \cite[Proposition 6.5]{HamKon}
that the argument for \cite[Proposition 6.1]{HamKon} can be extended
in this special case to give a well-defined embedding $\rho$ of $V$, with
analogous properties.

\begin{proposition}\label{prop:toric submanifolds don't move}
Let $\H \subset \T_\P$ be a complex subtorus, acting on $\P$ by
restriction of the standard $\T_\P$-action on $\P$. Let 
$V$ denote the (possibly singular) closure in $\P$ of the $\H$-orbit
of $[1:1:\cdots:1] \in \P$.  
Then there exists a choice of a family $\{\chi_s\}$ as
in~\eqref{eq:chi_s}, satisfying the assumptions ($\chi$-1) and ($\chi$-2),
such that 
for all $s$ with $0 \leq s < \infty$ we have $\rho_s = \rho_0$, where
$\rho_s$ is the (unique) embedding associated to $\chi_s$ constructed
in Proposition~\ref{prop:embedding of submanifold} above. In
particular, on the smooth locus of $V$, the complex structures
$\chi_s^* J_\P$ and $J_\P$ agree, for all $s$. 
\end{proposition}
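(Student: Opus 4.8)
The plan is to exploit the freedom in the choice of the family $\{\chi_s\}$ afforded by the construction of~\cite{BFMN}. Recall that this family is built from an auxiliary function $h$, smooth and strictly convex on a neighbourhood of the moment polytope $\Delta_\P\subseteq\t_\P^*$, by deforming the symplectic potential of $(\P,\omega_\P,J_\P)$ in the direction of $h$; concretely, on the open dense $\T_\P$-orbit of $\P$, identified with $\T_\P$ via $t\mapsto t\cdot[1:\cdots:1]$, the map $\chi_s$ preserves every $\T_\P$-orbit, its effect on a point $z$ being to move $z$ along the ``real'' directions of $\T_\P$ by the amount $s\,\nabla h(\mu_\P(z))$, where $\mu_\P\colon\P\to\t_\P^*$ is the moment map of the $T_\P$-action and $\nabla h\colon\Delta_\P^\circ\to\t_\P$ is its gradient (equivalently, $\chi_s$ is the time-$s$ flow of the gradient--Hamiltonian vector field of $h\circ\mu_\P$; either way the motion of $z$ stays inside $\T_\P\cdot z$ and is infinitesimally generated by $\nabla h(\mu_\P(z))\in\t_\P$). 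Writing $\h\subseteq\t_\P$ for the (real) Lie subalgebra corresponding to $\H$ — so that $\h=\iota(\t_0)$ in the application where $\H=\T_0$ — the point is that if we can choose $h$ so that $\nabla h$ carries (the relative interior of) $Q:=\mu_\P(V)$ into $\h$, then for every $z$ in the open $\H$-orbit through $[1:\cdots:1]$ we have $\mu_\P(z)$ in the relative interior of $Q$, hence $\nabla h(\mu_\P(z))\in\h$, hence $\chi_s(z)$ stays in $\H\cdot z$; since $\chi_s$ is a homeomorphism of $\P$ and $V$ is the closure of this orbit, this forces $\chi_s(V)=V$ for all $s\in[0,\infty)$, and then taking $\rho_s:=\rho_0$ will work.

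The crux is therefore to produce such an $h$, and the one genuinely geometric input is a transversality statement for $Q$. Let $\iota_\H\colon\h\hookrightarrow\t_\P$ be the inclusion, $\iota_\H^*\colon\t_\P^*\to\h^*$ its dual, and $\mu_\H:=\iota_\H^*\circ\mu_\P$, whose restriction to $V$ exhibits $V$ as a symplectic toric variety with moment polytope $\Delta_V=\iota_\H^*(Q)$. I claim that $\iota_\H^*$ is injective on $\mu_\P(\H\cdot[1:\cdots:1])$: if $q_j=\mu_\P(p_j)$ with $p_j$ in the open orbit and $\iota_\H^*q_1=\iota_\H^*q_2$, then $\mu_\H(p_1)=\mu_\H(p_2)$, so $p_1,p_2$ lie in the same fibre of $\mu_\H|_V$, hence in the same orbit of the compact subtorus of $\H$; since $\mu_\P$ is invariant under that subtorus (the torus action being Hamiltonian for an abelian group), $q_1=q_2$. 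Consequently $\iota_\H^*$ restricts to a homeomorphism of $Q$ onto $\Delta_V$, and no direction of $Q$ lies in $\ker\iota_\H^*=\Ann(\h)$.

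Given this, choose a linear section $\sigma\colon\h^*\to\t_\P^*$ of $\iota_\H^*$, write each $x\in\t_\P^*$ as $\sigma(\iota_\H^*x)+k(x)$ with $k(x)\in\ker\iota_\H^*$, and let $\gamma\colon\Delta_V\to\ker\iota_\H^*$ be the function (smooth on the relative interior, extended smoothly to a neighbourhood) presenting $Q$ as the graph $\{\sigma(\bar q)+\gamma(\bar q):\bar q\in\Delta_V\}$ — this exists precisely because of the transversality just established. Now set
\[
h:=C\,(h_1\circ\iota_\H^*)\;+\;\tfrac12\,\bigl|\,k(\,\cdot\,)-\gamma(\iota_\H^*(\,\cdot\,))\,\bigr|^2 ,
\]
where $h_1$ is a fixed positive-definite quadratic form on $\h^*$, $|\cdot|$ is an inner-product norm on $\ker\iota_\H^*$, and $C>0$ is a large constant. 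A direct computation shows $\nabla(h_1\circ\iota_\H^*)=\iota_\H\circ(\nabla h_1)\circ\iota_\H^*$ takes values in $\iota_\H(\h)=\h$, while the second term has vanishing differential along $\ker\iota_\H^*$ at every point of $Q$ (where $k(x)=\gamma(\iota_\H^*x)$); hence $\nabla h$ carries the relative interior of $Q$ into $\h$. A slightly more careful look at the Hessian — which on $Q$ is already positive definite, the only dangerous term being $\langle k-\gamma\circ\iota_\H^*,\,D^2\gamma\rangle$, controlled on the compact set $\Delta_\P$ by taking $C$ large — shows that $h$ is strictly convex on a neighbourhood of $\Delta_\P$ and hence is an admissible auxiliary function for~\cite{BFMN}; the resulting family $\{\chi_s\}$ satisfies ($\chi$-1), ($\chi$-2), and, by the first paragraph, $\chi_s(V)=V$ for all $s$.

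Finally, since $V$ is a complex submanifold of $(\P,J_\P)$ and $\chi_s(V)=V$, the subset $V$ is automatically a complex submanifold of $(\P,\chi_s^*J_\P)$ — for any diffeomorphism $\chi$, a subset $Y$ is $\chi^*J_\P$-complex exactly when $\chi(Y)$ is $J_\P$-complex. Thus the embedding $\rho_0$ (or its analogue from~\cite[Proposition 6.5]{HamKon} when $V$ is singular) satisfies conditions (a), (b), (c) of Proposition~\ref{prop:embedding of submanifold} at parameter $s$, and the uniqueness clause of that proposition forces $\rho_s=\rho_0$ for all $s$; the remaining assertion of the proposition then follows immediately. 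The main obstacle in making this rigorous is the first paragraph: one has to extract from~\cite{BFMN} the precise dependence of $\{\chi_s\}$ on $h$ and the explicit description of $\chi_s$ on the torus orbits — as was carried out for the flag-variety case in~\cite{HamKon} — and to check that this goes through for an $h$ of the above (merely strictly convex) form and for a possibly singular orbit closure $V$.
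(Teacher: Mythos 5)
The paper does not actually prove this proposition: it states explicitly that ``the technical aspects of the construction of the above family $\{\chi_s\}$ are not used in this manuscript,'' and defers to \cite{HamKon, BFMN}. So there is no in-paper argument to compare yours against, and you deserve credit for attempting a genuine proof. Your central idea --- choose the auxiliary convex function $h$ on $\Delta_\P$ entering the \cite{BFMN} construction so that its gradient lies in $\h$ along $Q=\mu_\P(V)$, which forces the deformation $\chi_s$ to move points within $\H$-orbits and hence to preserve $V=\overline{\H\cdot[1:\cdots:1]}$ --- is exactly the right mechanism, and is the mechanism used in \cite{HamKon} for the Gelfand--Tsetlin case.

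There is however a genuine gap in the construction of $h$. Your penalty term $\tfrac12\lvert k(\cdot)-\gamma(\iota_\H^*(\cdot))\rvert^2$ requires the graph function $\gamma$ presenting $Q$ over $\Delta_V$ to extend smoothly (at least $C^2$) to a neighbourhood of $\Delta_V$, but this fails precisely in the singular cases the proposition allows. For the paper's own example $V=\overline{\{[t^2:t^3:1]\}}\subset\p^2$, near the vertex coming from $r=|t|^2\to 0$ one has $\bar q\sim 2r^2$ while one of the transverse components of $Q$ behaves like $r^3\sim(\bar q/2)^{3/2}$, so $\gamma$ is $C^1$ but not $C^2$ at that boundary point. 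Then $h$ is not $C^2$, the deformed symplectic potential $g_s=g_0+sh$ is not $C^2$, its Hessian (hence the Kähler metric for $\chi_s^*J_\P$) is ill-defined, and the bound ``$D^2\gamma$ controlled on the compact set $\Delta_\P$'' in your Hessian estimate is false: $D^2\gamma$ is unbounded near that vertex. The transversality and injectivity arguments you give are only established on the relative interior of $Q$ (via $\mu_\H$ being a submersion with $T_\H$-fibres on the open orbit) and do not by themselves give boundary regularity of $\gamma$.

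The fix is also a simplification: the penalty term is not needed. In the \cite{BFMN} picture one deforms the symplectic potential as $g_s=g_0+sh$, and the requirement is that $g_s$ be strictly convex (and in the Guillemin--Abreu class); since $g_0$ already is strictly convex, it suffices that $h$ be smooth and merely convex, not strictly convex. Taking $h:=h_1\circ\iota_\H^*$ alone (a quadratic form pulled back from $\h^*$, degenerate along $\ker\iota_\H^*$) is smooth on all of $\t_\P^*$, and one computes $\nabla h=\iota_\H\circ(\nabla h_1)\circ\iota_\H^*$, which lands in $\h$ at \emph{every} point of $\t_\P^*$, not just on $Q$; no graph function is needed, no boundary regularity issue arises, and $\chi_s(V)=V$ follows by your first paragraph. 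Your final paragraph, applying the uniqueness clause of Proposition~\ref{prop:embedding of submanifold} to conclude $\rho_s=\rho_0$, is then correct as written.
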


Since the technical aspects of the
construction of the above family $\{\chi_s\}$ 
are not used in this manuscript, we do not discuss it
further here; for details see \cite{HamKon, BFMN}. 
In the setting of this manuscript, the fiber $X_0 :=
\pi^{-1}(0)$ over $0$ of our family $\X$ is by assumption the closure
of the $\T_0$-orbit of $[1:1:\cdots:1]$ for $\T_0 \subset \T_\P$ a
subtorus of $\T_\P$. Hence Proposition~\ref{prop:toric
  submanifolds don't move} applies, and we therefore obtain a family
$\{\chi_s\}$ of diffeomorphisms as in~\eqref{eq:chi_s} (satisfying the
assumptions ($\chi$-1) and ($\chi$-2)) such that the associated
$\rho_s$'s leave $X_0$ invariant,
i.e., $\rho_s=\rho_0$ on $X_0$ for all $0 \leq s < \infty$. This will
be crucial in what follows so we now record, by way of emphasis, that 
\begin{quote}
\textbf{henceforth, we assume that the family $\{\chi_s\}$ is chosen
  in such a way that the conclusion of Proposition~\ref{prop:toric
    submanifolds don't move} holds.} 
\end{quote}

Given this choice of $\{\chi_s\}$, 
in our later arguments we need to know that the maps $\rho_{s,t}$
defined in~\eqref{def:rho s t} 
satisfy some continuity conditions with respect to the parameter $t$.
We record the following.

\begin{proposition}\label{prop:rho-smooth-in-t}
  Let $s \in \R$ with $0 \leq s < \infty$. Let
  $\X \subseteq \P \times \C$ be the toric degeneration as above
  and $\{\chi_s\}$ be a family of diffeomorphisms as above, 
chosen so that the conclusion of
  Proposition~\ref{prop:toric submanifolds don't move}
  holds. Then 
\begin{enumerate}
\item the map $\X_{[0,1]} \to \P$ given by $x \mapsto
  \rho_{s,\pi(x)}(x)$ is continuous, and  
\item the map $\U_{[0,1]} \to \P \times \C$ given by $x \mapsto
  (\rho_{s,\pi(x)}(x), \pi(x))$ is a diffeomorphism onto its image. 

\end{enumerate}
\end{proposition}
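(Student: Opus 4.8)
The plan is to reduce both continuity statements to a single input: that $\rho_{s,t}$, for the chosen family $\{\chi_s\}$, varies continuously (in fact smoothly) as the fiber $X_t$ varies inside the family $\X_{[0,1]}$. The key geometric fact that makes this work is the conclusion of Proposition~\ref{prop:toric submanifolds don't move}: since $X_0$ is the closure of a $\T_0$-orbit through $[1:\cdots:1]$, we have $\rho_{s,0}=\rho_0$ for all $s$; that is, the $s$-deformation does nothing to the central fiber. Thus, near $t=0$, the embedding $\rho_{s,t}$ limits onto the inclusion $X_0 \hookrightarrow \P$, giving continuity of $x \mapsto \rho_{s,\pi(x)}(x)$ across the special fiber. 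Away from $t=0$, i.e. on $\X_{(0,1]}$, we can invoke the construction of $\rho_{s,t}$ from Proposition~\ref{prop:embedding of submanifold} applied fiberwise, together with the smooth dependence of the fibers $X_t$ on $t$ (the family $\X\to\C$ is smooth away from $X_0$, by the standing assumptions) and the smooth dependence of the Moser-type flow that defines $\rho_{s,t}$ on its parameters.

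For part~(1), I would argue in two regions. On $\X_{(0,1]}$: the embeddings $\rho_{s,t}$ are produced by a Moser isotopy correcting the symplectic form after applying $\chi_s$, and both the family of K\"ahler forms $\chi_s^*\omega_\P|_{X_t}$ and the original $\omega_t$ depend smoothly on $(t,x)$; since the Moser construction depends smoothly (and hence continuously) on such parametrized data, $x \mapsto \rho_{s,\pi(x)}(x)$ is continuous (smooth) on $\X_{(0,1]}$. At $t=0$: here $\rho_{s,0}=\rho_0$ is just the inclusion, and one checks that $\rho_{s,t}(x) \to \rho_0(x_0)$ as $x\to x_0 \in X_0$ in $\X_{[0,1]}$. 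The cleanest way is to note that the defect $\chi_s^*\omega_\P|_{X_t} - \omega_t$ tends to $0$ (in $C^\infty$ on compacts, appropriately interpreted on the smooth locus) as $t\to 0$, because the limiting object on $X_0$ is $\T_0$-invariant and $\chi_s$ is built to be compatible with the $\T_\P$-action in that limit, so the correcting isotopy tends to the identity; hence $\rho_{s,t}$ tends to the inclusion. Combining the two regions gives continuity on all of $\X_{[0,1]}$. (A mild technical point: $\X_{[0,1]}$ may be singular at points of $X_0$, so ``continuous'' here is continuity of the map into $\P$ on the topological space $\X_{[0,1]}$; smoothness need only be claimed on $\X_{(0,1]}$ and on the smooth locus of $X_0$.)

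For part~(2), the map in question is $\Psi_s\colon x\mapsto (\rho_{s,\pi(x)}(x),\pi(x))$ on $\U_{[0,1]}$. By part~(1) it is continuous, and by the smoothness discussion above it is smooth on the open dense locus $\U_{(0,1]}$ and on the smooth locus of $U_0$. It is a bijection onto its image: injectivity in the $\C$-direction is automatic since it preserves $\pi$, and on each fiber $\rho_{s,t}$ is an embedding, hence injective; surjectivity onto its image is tautological. That its inverse is continuous follows because on $\U_{(0,1]}$ the fiberwise maps $\rho_{s,t}$ are diffeomorphisms onto their images depending smoothly on $t$, so $\Psi_s^{-1}$ is given fiberwise by smoothly varying inverses, and across $t=0$ one uses again that $\rho_{s,0}=\rho_0$ together with the openness of $\U_{[0,1]}$ in $\X_{[0,1]}$. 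To upgrade ``homeomorphism onto image'' to ``diffeomorphism onto image'', one verifies that $d\Psi_s$ is everywhere injective: in the fiber directions this is the injectivity of $d\rho_{s,t}$, and the extra $\partial_t$-direction maps to something with nonzero $\C$-component, so the differential is injective; since $\dim \U_{[0,1]} = 2n+2 = \dim(\P\times\C)$ wherever the map is smooth, $d\Psi_s$ is an isomorphism onto its image and $\Psi_s$ is an immersion, hence (being an injective immersion that is a homeomorphism onto its image) a diffeomorphism onto its image.

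The main obstacle will be the behavior at $t=0$: making rigorous the claim that $\rho_{s,t}$ converges to the inclusion $X_0\hookrightarrow\P$ as $t\to 0$, given that $X_0$ (and the total space $\X$ near it) may be singular and that the K\"ahler data on $X_t$ degenerates. The honest way to handle this is to restrict attention to the smooth locus, use the explicit control over $\{\chi_s\}$ coming from the construction in \cite{HamKon,BFMN} (in particular the fact, encoded in Proposition~\ref{prop:toric submanifolds don't move}, that $\{\chi_s\}$ is chosen so that the toric fiber $X_0$ is fixed), and then extend continuously across the singular set by the density and properness arguments already used in Proposition~\ref{prop-grad-Hamiltonian}(c). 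Everything else is a parametrized-Moser-theorem bookkeeping exercise that I would not spell out in detail.
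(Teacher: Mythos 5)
Your overall strategy is on the right track — reduce both parts to smooth dependence of the fiberwise Moser construction on the parameter $t$, handle $t=0$ using $\rho_{s,0}=\rho_0$ from Proposition~\ref{prop:toric submanifolds don't move}, and in part~(2) exploit the triangular structure of the derivative. The injectivity and derivative arguments for part~(2) are essentially what the paper does. But there are two substantive issues.

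The crucial technical step in part~(1) — that the fiberwise Moser-type construction of $\rho_{s,t}$ ``depends smoothly on parametrized data'' — is asserted, not verified. This is not a standard parametrized-Moser lemma that you can just cite, because the time-dependent vector field $Y_s$ on $X_t$ is defined implicitly by an equation $\iota_{Y_s}(\omega_s|_{X_t}) = -\psi_s^*(\iota_{\mathbb{V}_s}\omega_\P)|_{X_t}$ in which \emph{both} the form and the right-hand side are restricted to the moving submanifold $X_t$; there is no a priori reason the resulting $Y_s$ varies smoothly in $t$, and at $t=0$ the ambient family $\X$ becomes singular. The paper's proof closes this gap by globalizing the construction: it defines $\widehat{\mathbb{Y}}_s$ on all of $\X_{smooth}$ by an analogous equation using $\Omega$ on $\P\times\C$, proves the key identity~\eqref{eq:form is a pullback from base} showing the defining $1$-form is a pullback from $\P$, and then observes that $Y_s$ on each fiber is precisely the symplectic-orthogonal projection of $\widehat{\mathbb{Y}}_s$ onto the vertical subbundle $\mathcal{V}$ — which is a smooth bundle map. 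This is the missing idea in your proposal, and without it the continuity claim has no actual proof.

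Relatedly, your treatment of $t=0$ is circular: you claim ``the defect $\chi_s^*\omega_\P|_{X_t}-\omega_t$ tends to $0$ as $t\to 0$, so the correcting isotopy tends to the identity, hence $\rho_{s,t}$ tends to the inclusion'' — but ``the correcting isotopy tends to the identity as $t\to 0$'' \emph{is} the continuity statement you are trying to prove, so this does not help. The paper instead gets continuity at $X_0$ by invoking the argument from the proof of \cite[Proposition~6.1]{HamKon} that the globally-smooth flow extends continuously to $X_0$. Finally, a small slip in part~(2): $\dim(\P\times\C)=2N+2 \neq 2n+2=\dim\U_{[0,1]}$, so the derivative is not an isomorphism onto $T(\P\times\C)$; you only need (and the paper only proves) that $d\Psi_s$ is injective. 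This does not affect your conclusion, but the dimension count as stated is wrong.
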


\begin{proof}

We first review the construction of the map $\rho_s$ from~\cite{HamKon}.  
Let $\chi_s\colon \P \to \P$ be the diffeomorphisms from~\eqref{eq:chi_s}.  
Let $\psi_s = \chi_0 \circ \chi_s\inv \colon \P \to \P$ 
and let $\omega_s = (\chi_s\inv)^* \omega_\P$. By property ($\chi$-2)
of the family $\{\chi_s\}$ we know that $(\omega_s, J_\P)$ is a
K\"ahler structure on $\P$. Thus, any submanifold $V$ of $\P$
which is complex with respect to $J_\P$ is also a symplectic
submanifold with respect to $\omega_s$. 
Hence we can 
define a time-dependent vector field 
$\mathbb{V}_s$ on $\P$ by 
\[
(\mathbb{V}_s)_{\psi_s(p)} = \frac{d}{d\tau} \psi_{s+\tau} (p) \Bigr\rvert_{\tau=0} 
\]
for $p\in \P$.  
Following~\cite{HamKon} we further define a vector field $Y_s$ on $V$ by 
\begin{equation}\label{eq:Y_s}
\iota_{Y_s}\bigl(\omega_s\vert_V\bigr) 
= -\psi_s^* \bigl(\iota_{\mathbb{V}_s}\omega_\P\bigr)\bigr\rvert_V.
\end{equation}
Letting $\varphi_s$ denote the corresponding flow of the vector field
$Y_s$, again following \cite{HamKon} we finally define $\rho_s$ by 
\[
\rho_s := \chi_s^{-1} \circ \rho_0 \circ \varphi_s \circ \chi_0
\bigr\rvert_{V}. 
\]
The construction just recounted deals with a single submanifold $V
\subseteq \P$. To prove the proposition we must show that this
construction can be extended to one on a family $\X \subseteq \P
\times \C$ in a way which guarantees the claimed smoothness and
continuity properties with respect to the extra parameter. To
do this, we first define $\widehat{\chi}_s: \P \times \C \to \P \times \C$
by $\widehat{\chi}_s(x,t) = (\chi_s(x), t)$ for $(x,t) \in \P \times \C$
and $\widehat{\psi}_s := \widehat{\chi}_0 \circ \widehat{\chi}_s^{-1}$. We then
define a time-dependent vector field $\widehat{\mathbb{V}}_s$ on $\P
\times \C$ by 
\[
(\widehat{\mathbb{V}}_s)_{(\psi_s(p),t)} :=  \frac{d}{d\tau}
\widehat{\psi}_{s+\tau}(p,t) \Bigr\rvert_{\tau=0}
\]
and a vector field $\widehat{\mathbb{Y}}_s$ on $\X_{smooth}$, the smooth
locus of $\X$, by 
\begin{equation}\label{eq:def hat Y_s}
\iota_{\widehat{\mathbb{Y}}_s}(\widehat{\omega}_s) =
\iota_{\widehat{\mathbb{Y}}_s}((\widehat{\chi}_s^{-1})^* \Omega) = -
\widehat{\psi}_s^*(\iota_{\widehat{\mathbb{V}}_s} \Omega) \bigr\vert_{\X_{smooth}}
\end{equation}
where $\Omega$ is the product K\"ahler structure on $\mathcal{P} \times \C$. 
Note that $\widehat{\mathbb{V}}_s = (\mathbb{V}_s,0)$ by definition of
$\widehat{\psi}_s$ and by construction $\widehat{\mathbb{Y}}_s$ is a smooth
vector field on $\X_{smooth}$. We
wish to analyze the relation between $Y_s$, defined via the above
construction from \cite{HamKon} on each $X_t$ separately, and
$\widehat{\mathbb{Y}}_s$, for which we need some preliminaries. Let $pr_1:
\P \times \C \to \P$ be the projection to the first factor and
$\mathcal{V} \subseteq T\X_{smooth}$ denote the vertical subbundle of
$T\X_{smooth}$ with respect to $pr_1$, i.e., $\mathcal{V}_x :=
\ker(d(pr_1)_x)$ for each $x \in \X_{smooth}$. Then $\mathcal{V}$ is a
smooth symplectic subbundle of $T\X_{smooth}$ with respect to $\Omega
\vert_{\X_{smooth}}$, so there is a canonical decomposition
$T\X_{smooth} \cong \mathcal{V} \oplus \mathcal{V}^{\Omega}$ and the
projection $T\X_{smooth} \to \mathcal{V}$ is smooth. 

We now claim that 
\begin{equation}
  \label{eq:form is a pullback from base}
  - \widehat{\psi}_s^*(\iota_{\widehat{\mathbb{V}}_s} \Omega) = pr_1^* \big( -
  \Psi_s^*(\iota_{\mathbb{V}_s} \omega_{\P}) \big)
\end{equation}
as $1$-forms on $\P \times \C$. Indeed, for any $w \in T(\P \times \C)
\cong T\P \oplus T\C$, we may decompose $w=(w_\P, w_\C)$ into its two
factors and compute 
\begin{equation*}
  \begin{split}
\widehat{\psi}_s^*(\iota_{\widehat{\mathbb{V}}_s} \Omega)(w) & =
\Omega(\widehat{\mathbb{V}}_s, (\widehat{\psi}_s)_*(w)) \\
 & = \omega_{\P}(\mathbb{V}_s, ((\widehat{\psi}_s)_*w)_{\P})  \textup{
   since } \widehat{\mathbb{V}}_s = (\mathbb{V}_s, 0) \\
 & = \omega_{\P}(\mathbb{V}_s, (\Psi_s)_*(\omega_{\P})) \textup{ since
   $\widehat{\psi}_s$ acts as the identity on the $\C$ factor} \\
 & = (\Psi_s)_*(\iota_{\mathbb{V}_s} \omega_{\P})(w_{\P}) \\
 & = pr_1^*(\Psi_s^*(\iota_{\mathbb{V}_s} \omega_{\P}))(w).    
  \end{split}
\end{equation*}
Now suppose $Z \in \mathcal{V} \subseteq T\X_{smooth}$, so $Z = (Z_\P,
0)$ where $Z_{\P} \in T\P  \cap T\X_{smooth}$. We have 
\begin{equation*}
  \begin{split}
    \iota_{\widehat{\mathbb{Y}}_s}(\widehat{\omega}_s)(Z) & = -
    \widehat{\psi}_s^*(\iota_{\widehat{\mathbb{V}}_s} \Omega)(Z) \\ 
   & = pr_1^*(- \Psi_s^*(\iota_{\mathbb{V}_s} \omega_{\P}))(Z) \\
 & = - \Psi_s^*(\iota_{\mathbb{V}_s} \omega_{\P})(Z_\P) \\
 & = \iota_{Y_s}(\omega_s)(Z_\P)
  \end{split}
\end{equation*}
where the first equality is by~\eqref{eq:def hat Y_s}, the second
by~\eqref{eq:form is a pullback from base},
and the last is the definition~\eqref{eq:Y_s} of $Y_s$ on each fiber. From
this it follows that the symplectic-orthogonal projection
$(\widehat{Y}_s)_{vert}$ of $\widehat{Y}_s$ to the vertical subbundle
$\mathcal{V} \subseteq T\X_{smooth}$ agrees, fiberwise, with the
vector field $Y_s$ defined using the original construction from
\cite{HamKon}. Since the symplectic-orthogonal projection is smooth,
as argued above, it follows that the vector field $Y_s$, considered
together on all of $\X_{smooth}$, is smooth on $\X_{smooth}$. Let
$\widehat{\varphi}_s$ denote the flow corresponding to
$(\widehat{Y}_s)_{vert}$, which exists since it exists fiberwise for each
$X_t$ with $t\neq 0$, and for $X_0$, the argument in the proof of
\cite[Proposition 6.1]{HamKon} shows that a flow exists and extends
continuously to all of $X_0$. 
Hence the statement (1) of
the proposition now follows. 

To prove (2), we first observe that the above argument shows that the
map $\mathcal{U}_{[0,1]} \to \P \times \C$ given by $x \mapsto
(\rho_{s, \pi(x)}(x), \pi(x))$ is smooth. So it suffices to show that
this map is smoothly invertible. We know that for each fixed $t \in
\C$ with $t \neq 0$, the map $\rho_{s,t}: X_t \to \P$ is an embedding. Moreover,
Proposition~\ref{prop:toric submanifolds don't move} implies that on $U_0 \subseteq X_0$ we have
$\rho_{s,0}=\rho_0$, hence $\rho_{s,0}$ is also an embedding. It
follows that $x \mapsto (\rho_{s,\pi(x)}(x), \pi(x))$ is injective, so
it is bijective on its image. It remains to show that the inverse map
is also smooth. Since $\mathcal{U}_{[0,1]}$ lies in $\X_{smooth}$ we
may decompose $T\mathcal{U}_{[0,1]}$ into the vertical subbundle
$\mathcal{V}$ and its complement $\mathcal{V}^{\Omega}$. With respect
to this decomposition and the standard decomposition $T\P \oplus T\C$
of $\P \times \C$, the derivative of the above map at a point in
$\pi^{-1}(t)$ is of the form 
\[
\begin{bmatrix} (\rho_{s,t})_* & \star \\ 0 & I \end{bmatrix} 
\]
where $I$ is an isomorphism and $(\rho_{s,t})_*$ is injective. Thus
the whole derivative is also injective, and it follows that the
inverse mapping is smooth.  
\end{proof}

\subsection{Pullbacks of prequantum data}\label{subsec:lifts}

The main result of this manuscript deals with quantizations, and in
particular with sections of certain prequantum line bundles. 
In this section, we show that the gradient-Hamiltonian flows
$\phi_{1-t}$ and the embeddings $\rho_{s,t}$ from
Section~\ref{ss:varying-cplx-str} lift to the total spaces of the
relevant line bundles. This will be crucial for our constructions
below. Recall that we have the prequantum data
$(L_\P, \nabla_\P, h_\P)$ and $(L_\X, \nabla_\X, h_\X)$ respectively
on $\P$ and $\X$ and that the latter restricts to give prequantum data
on the fibers $X_t$, which we denote by $(L_t, \nabla_t, h_t)$.

We first recall that the horizontal lift of the gradient-Hamiltonian
flow with respect to the connection $\nabla_\X$ 
preserves the connections and Hermitian metrics on
each fiber \cite[Proposition 4.3]{HamKon}. 

\begin{lemma}\label{lemma:lift-grH}(\cite[Proposition 4.3]{HamKon}) 

Let $s, t \in \R$ with $s\geq t > 0$. 
\begin{enumerate} 

\item If $s>t$, there exists a unique horizontal lift $\tilde{\phi}_t:
  L_s \to L_{s-t}$ of the gradient-Hamiltonian flow $\phi_t: X_s \to
  X_{s-t}$ to the total spaces of the prequantum line bundles. The
  lift $\tilde{\phi}_t$ is an isomorphism of line bundles and also preserves the fiberwise connections and
Hermitian structures, i.e., $\tilde{\phi}_t^* \nabla_{s-t} = \nabla_s$ 
and $\tilde{\phi}_t^* h_{s-t} = h_s$.

\item If $s=t$, there there exists a unique horizontal lift $\tilde{\phi}_{s=t}:
  L_t \vert_{U_t} \to L_0 \vert_{U_0}$ of the gradient-Hamiltonian flow $\phi_t: U_t \to
  U_0$ to the total spaces of the (restricted) prequantum line bundles. The
  lift $\tilde{\phi}_t$ is an isomorphism of line bundles and also preserves the fiberwise connections and
Hermitian structures, i.e., $\tilde{\phi}_t^* \nabla_{0} = \nabla_t$ 
and $\tilde{\phi}_t^* h_{0} = h_t$ (restricted to $U_t$
and $U_0$). 

\item 
The map
$L_0 \vert_{U_0} \times [0,1] \to L_\X$ given by $(x,t) \mapsto
\tilde{\phi}_t^{-1}(x)$ is smooth, where the domain $L_0 \vert_{U_0}
\times [0,1]$ is given the standard smooth structure induced from the
product structure. 
\end{enumerate}

\end{lemma}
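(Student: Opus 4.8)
The plan is to realize each $\tilde\phi_t$ as the flow of the horizontal lift of the gradient-Hamiltonian vector field $V_\pi$ from~\eqref{def-grad-Hamiltonian} with respect to the connection $\nabla_\X$, and then to deduce every claimed property from standard facts about parallel transport; this is precisely the strategy of \cite[Proposition~4.3]{HamKon}, and under assumptions (a)--(d) all the structures needed are in place, so the work is in checking that the cited argument applies in our (slightly more general) setting.

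First I would let $\tilde V_\pi$ be the $\nabla_\X$-horizontal lift of $V_\pi$ to the total space of $L_\X$ restricted to $\X\setminus Z$. This is a smooth vector field, and since parallel transport along a curve exists for as long as the curve itself is defined, the time-$t$ flow of $\tilde V_\pi$ (respectively $-\tilde V_\pi$) is defined exactly over the region where $\phi_t$ (respectively $\phi_t^{-1}$) is defined. Setting $\tilde\phi_t$ equal to this flow gives a lift of $\phi_t$, and it is the unique \emph{horizontal} lift because the horizontal lift of a curve through a prescribed point is unique. For $s>t>0$, Proposition~\ref{prop-grad-Hamiltonian}(a) says $\phi_t\colon X_s\to X_{s-t}$ is a diffeomorphism, so $\tilde\phi_t$ is a diffeomorphism of the total spaces of $L_s$ and $L_{s-t}$; it is linear (indeed isometric) on fibres because parallel transport is a linear isometry of fibres, so it is an isomorphism of Hermitian line bundles. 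The identity $\tilde\phi_t^*\nabla_{s-t}=\nabla_s$ holds because the flow of a horizontal lift carries horizontal subspaces to horizontal subspaces, and $\tilde\phi_t^*h_{s-t}=h_s$ holds because $\nabla_\X$-parallel transport preserves $h_\X$ (this is part of the definition of prequantum data). For the case $s=t$, the base flow $\phi_t$ is only defined on the dense open $U_t$ and maps it symplectomorphically onto $U_0\subset X_0$ by Proposition~\ref{prop-grad-Hamiltonian}(c); running the same construction over $U_t$ (which lies away from $Z$) yields $\tilde\phi_t\colon L_t|_{U_t}\to L_0|_{U_0}$ with the analogous properties.

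For part (3), I would note that $(x,t)\mapsto\tilde\phi_t^{-1}(x)$ is literally the flow map $(x,t)\mapsto(\text{flow of }-\tilde V_\pi)_t(x)$ of a single smooth vector field, started at points of $L_0|_{U_0}$; such flow maps depend smoothly jointly on the initial point and the time throughout their domain of definition, and that domain contains $L_0|_{U_0}\times[0,1]$ because the corresponding statement holds downstairs for $\phi_t^{-1}$ on $U_0$. The only genuinely delicate point is the behaviour as $t\to 0$, i.e.\ near the (possibly singular) special fibre $X_0$: one must know that the lifted flow extends continuously there and that the joint smoothness survives down to $t=0$. I expect this endpoint analysis near $X_0$ to be the main obstacle; it is handled in \cite{HamKon} by combining the smoothness of the base flow near $X_0$ (Proposition~\ref{prop-grad-Hamiltonian}(c) and its proof) with the horizontal-lift construction, and I would import that argument, checking that nothing in it used that $X$ is a flag manifold.
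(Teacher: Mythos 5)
Your proposal is correct and uses essentially the same approach as the paper: parts (1) and (2) follow the $\nabla_\X$-horizontal-lift argument of \cite[Proposition~4.3]{HamKon} (the paper simply cites this), and for part (3) both you and the paper observe that $\tilde\phi_t^{-1}$ is the flow of the horizontal lift of the (smooth) gradient-Hamiltonian field on $L_\X|_{\X\setminus Z}$, which depends smoothly and jointly on the base point and time. One small remark: the "delicate endpoint analysis near $X_0$" you flag is not actually an issue here, since $U_0$ is contained in $\X\setminus Z$ (it lies in the smooth locus and away from $\Crit(\Re\pi)$), so $\tilde V_\pi$ is already a smooth vector field on an open neighbourhood of $L_0|_{U_0}$ and the standard joint smoothness of flows applies directly at $t=0$; the delicate extension to the full singular fibre $X_0$ in Proposition~\ref{prop-grad-Hamiltonian}(c) concerns points outside $U_0$ and is not needed for this lemma.
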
 

\begin{proof}
  The argument is essentially the same as in \cite{HamKon}. For the
  statement in (3) we note that the gradient-Hamiltonian flow on $\X
  \setminus Z$ is smooth and hence its horizontal lift is also
  smooth. Now an argument similar to that in the proof of
  Proposition~\ref{prop:rho-smooth-in-t} yields the result. 
\end{proof}

We remark that it follows from the above lemma that the following
diagram commutes for $t$ with $0 < t < 1$:
\[
\xymatrix{ 
L_1 \ar[d] \ar[r]^{\tilde{\phi}_{1-t}} & L_{t} \ar[d] \\
X\cong X_1 \ar[r]^{\phi_{1-t}} & X_{t} \\
}
\]

We will also need a similar statement for the embeddings
$\rho_{s,t}$ \cite[Proposition6.3(1)]{HamKon}. 

\begin{lemma}\label{lemma:lift-rhos} (\cite[Proposition
  6.3(1)]{HamKon}) 
There exists a lift $\tilde{\rho}_{s,t}: L_t \to L_\P
\vert_{\rho_{s,t}(X_t)}$ of $\rho_{s,t}$ to
the total spaces of the line bundles $L_t$ and $L_\P
\vert_{\rho_{s,t}(X_t)}$ which identifies the prequantum
data.  
In particular, the following diagram commutes:
\[
\xymatrix{ 
L_t \ar[d] \ar[r]^{\tilde{\rho}_{s,t}} & L_\P \vert_{\rho_{s,t}(X_t)} \ar[d] \\
X_t \ar[r]^{\rho_{s,t}} & \rho_{s,t}(X_t) \\
}
\]
\end{lemma}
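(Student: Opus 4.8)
The plan is to adapt the argument of \cite[Proposition 6.3(1)]{HamKon} to the submanifold $V = X_t$. Recall from the proof of Proposition~\ref{prop:rho-smooth-in-t} that, since $\chi_0 = \operatorname{id}$, we have $\rho_{s,t} = \chi_s^{-1}\circ \rho_{0,t}\circ \varphi_s$, where $\rho_{0,t}\colon X_t \hookrightarrow \P$ is the original holomorphic embedding and $\varphi_s$ is the flow of the vector field $Y_s$ defined in~\eqref{eq:Y_s}. Because $X_t$ is a complex submanifold of $\P$ for the original structure $J_\P$, the line bundle $L_t = L_\X\vert_{X_t}$ is, by construction, the restriction $\rho_{0,t}^*L_\P$ equipped with the pulled-back connection $\rho_{0,t}^*\nabla_\P$ and Hermitian metric $\rho_{0,t}^*h_\P$; hence there is a tautological lift $\tilde\rho_{0,t}\colon L_t \to L_\P\vert_{\rho_{0,t}(X_t)}$ identifying all prequantum data. (For $t=0$ one moreover has $\rho_{s,0} = \rho_{0,0}$ by Proposition~\ref{prop:toric submanifolds don't move}, so $\tilde\rho_{s,0} := \tilde\rho_{0,0}$ already works; assume $t\neq 0$ from now on.) The idea is then to produce $\tilde\rho_{s,t}$ by lifting the isotopy $s\mapsto \rho_{s,t}$, starting from $\tilde\rho_{0,t}$ at $s=0$.

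By Proposition~\ref{prop:embedding of submanifold}(a), the pullback $\rho_{s,t}^*\omega_\P = \omega_t$ is independent of $s$, so $\{\rho_{s,t}\}_s$ is an isotopy of symplectic embeddings of $(X_t,\omega_t)$ into $(\P,\omega_\P)$, generated by a time-dependent vector field $W_s$ along the moving submanifold $\rho_{s,t}(X_t)$, i.e.\ $\tfrac{d}{ds}\rho_{s,t} = W_s\circ \rho_{s,t}$. The key step is the computation — obtained by differentiating $\rho_{s,t} = \chi_s^{-1}\circ \rho_{0,t}\circ \varphi_s$ and feeding in the defining relation~\eqref{eq:Y_s} for $Y_s$ together with the definitions of $\psi_s$, $\mathbb{V}_s$ and $\omega_s = \psi_s^*\omega_\P$ from the proof of Proposition~\ref{prop:rho-smooth-in-t} — that the two resulting contributions to $\iota_{W_s}\omega_\P$ cancel exactly along $\rho_{s,t}(X_t)$, so that
\[
\iota_{W_s}(\omega_\P)\vert_{\rho_{s,t}(X_t)} = 0 .
\]
In particular the ``flux'' one-form of this symplectic isotopy vanishes, so there is no holonomy obstruction: extending $W_s$ to an ambient vector field on $\P$ and taking its $\nabla_\P$-horizontal lift, the time-$s$ flow restricts to a bundle isomorphism $L_\P\vert_{\rho_{0,t}(X_t)} \to L_\P\vert_{\rho_{s,t}(X_t)}$ covering the transport map $\rho_{s,t}\circ \rho_{0,t}^{-1}$, independent of the chosen extension. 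Horizontal transport always preserves $h_\P$, and since $\iota_{W_s}\omega_\P$ vanishes along the submanifold the induced connection on the restricted bundle is preserved as well. Composing this isomorphism with $\tilde\rho_{0,t}$ yields the desired $\tilde\rho_{s,t}\colon L_t \to L_\P\vert_{\rho_{s,t}(X_t)}$, which covers $\rho_{s,t}$ and identifies the prequantum data; commutativity of the square in the statement is then immediate.

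The main obstacle is precisely the vanishing $\iota_{W_s}(\omega_\P)\vert_{\rho_{s,t}(X_t)} = 0$ — equivalently, the triviality of the flux of the isotopy $\{\rho_{s,t}\}_s$. This is exactly what the Moser-type construction of $Y_s$ in~\eqref{eq:Y_s} is designed to guarantee, so the computation carried out in \cite{HamKon} for a single submanifold carries over verbatim with $V = X_t$; the rest is bookkeeping with horizontal lifts. Finally, smooth dependence of $\tilde\rho_{s,t}$ on $s$ — and, combining with Proposition~\ref{prop:rho-smooth-in-t} and the argument of Lemma~\ref{lemma:lift-grH}(3), also on the family parameter $t$ — follows from the smooth dependence on parameters of the solutions of the ODE defining the horizontal flow.
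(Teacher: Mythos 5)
Your proof is correct and is, at bottom, the same construction as the one the paper quotes from \cite[Proposition~6.3(1)]{HamKon} and later recalls in the proof of Proposition~\ref{prop:rho-tilde-smooth-in-t}: one transports $L_\P$ horizontally along the $s$-isotopy $\rho_{s,t}$ starting from the tautological lift at $s=0$. The paper itself gives no argument (it is a direct citation), while you spell out the key point — that the Moser-type defining relation~\eqref{eq:Y_s} for $Y_s$ forces $\iota_{W_s}\omega_\P$ to vanish on $T\rho_{s,t}(X_t)$, so that horizontal transport preserves not only $h_\P$ but also the restricted connection. The only cosmetic difference is that you extend $W_s$ to an ambient vector field on $\P$ and take its $\nabla_\P$-horizontal lift, whereas \cite{HamKon} (as recounted in Proposition~\ref{prop:rho-tilde-smooth-in-t}) pulls the prequantum data back to $X_t\times[0,\infty)$ and horizontally lifts $\partial/\partial s$ there; these are equivalent, since the flow lines through $\rho_{0,t}(X_t)$ are determined by $W_s$ along $\bigcup_s\rho_{s,t}(X_t)$ regardless of the extension, and the same flux computation is needed in either formulation.
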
 

We need a smoothness property for the map $\widetilde{\rho}_{s,t}$,
analogous to Proposition~\ref{prop:rho-smooth-in-t} for the $\rho_{s,t}$. 

\begin{proposition}\label{prop:rho-tilde-smooth-in-t}
  Let $s \in \R$, $0 \leq s < \infty$ be fixed. Let
  $\X \subseteq \P \times \C$ be the toric degeneration as above
  and $\{\chi_s\}$ be the family of diffeomorphisms as above (in
  particular chosen so that the conclusion of
  Proposition~\ref{prop:toric submanifolds don't move}
  holds). For any $0 \leq t \leq 1$, let $\rho_{s,t}:
  X_t \into \P$ be the embedding defined in
  Proposition~\ref{prop:embedding of submanifold}, and 
let $\tilde{\rho}_{s,t}$ be the lifting of $\rho_{s,t}$ as defined in 
Lemma~\ref{lemma:lift-rhos}. Let $L_\U$ denote the restriction of the
line bundle $L_\X$ to the open subset $\U_{[0,1]}$ defined in~\eqref{eq:X-and-U-fam}. 
Then the map
\begin{equation}\label{eq:formula for tilderhos}
\tilde{\rho}_s: L_\U \to L_\P \times \C, \quad 
(x,\xi) \mapsto (\tilde{\rho}_{s,\pi(x)}(\xi), \pi(x))
\end{equation} 
where the pair $(x,\xi)$ consists of a point $x \in \U$ and $\xi \in
L_x$, is a diffeomorphism onto its image. 
\end{proposition}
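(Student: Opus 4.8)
The plan is to mirror, at the level of total spaces of line bundles, the strategy used to prove Proposition~\ref{prop:rho-smooth-in-t}, replacing the vertical-projection argument there with its horizontal-lift counterpart. First I would recall that, by Lemma~\ref{lemma:lift-rhos}, for each fixed $t$ with $0 < t \leq 1$ the lift $\tilde\rho_{s,t}\colon L_t \to L_\P\vert_{\rho_{s,t}(X_t)}$ is the unique horizontal lift of the embedding $\rho_{s,t}$ that identifies the prequantum data, and that for $t=0$, since $\rho_{s,0}=\rho_0$ by Proposition~\ref{prop:toric submanifolds don't move}, the lift $\tilde\rho_{s,0}$ is simply the (fixed, $s$-independent) lift $\tilde\rho_0$. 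So, just as in Proposition~\ref{prop:rho-smooth-in-t}, the map $\tilde\rho_s$ in~\eqref{eq:formula for tilderhos} is fiberwise well-defined and injective on each fiber $L_t$, and since the base map $x \mapsto (\rho_{s,\pi(x)}(x),\pi(x))$ is already known to be injective on $\U_{[0,1]}$, the total map $\tilde\rho_s$ is a bijection onto its image. It then remains only to check smoothness of $\tilde\rho_s$ and of its inverse.

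For smoothness of $\tilde\rho_s$, I would give a family-level construction of the horizontal lift, parallel to the construction of $\widehat{\mathbb{Y}}_s$ and $\widehat{\varphi}_s$ in the proof of Proposition~\ref{prop:rho-smooth-in-t}. Recall from that proof that $\rho_{s,t} = \chi_s^{-1}\circ\rho_0\circ\varphi_s\circ\chi_0\vert_{X_t}$, where $\varphi_s$ is the flow of the vertical vector field $Y_s$ which we showed is the fiberwise restriction of a globally smooth vector field $(\widehat Y_s)_{\mathrm{vert}}$ on $\X_{smooth}$. The lift $\tilde\rho_{s,t}$ is built from the same ingredients lifted to line bundles: the diffeomorphisms $\chi_s$ lift to $L_\P$ (these lifts preserve $\nabla_\P,h_\P$, as in~\cite{BFMN,HamKon}), the embedding $\rho_0$ lifts to $\tilde\rho_0$, and the flow $\varphi_s$ lifts to the horizontal flow $\tilde\varphi_s$ of the horizontal lift of $Y_s$ with respect to $\nabla_\X$. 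Since a horizontal lift of a smooth vector field on the total space of a smooth line bundle is smooth, and since $(\widehat Y_s)_{\mathrm{vert}}$ is smooth on all of $\X_{smooth}$, its horizontal lift to $L_\U$ is a single smooth vector field whose flow restricts fiberwise to the $\tilde\varphi_s$; over $X_0$ one invokes, exactly as before, the argument in the proof of \cite[Proposition 6.1]{HamKon} to see the flow extends continuously (indeed smoothly on $U_0$). Composing the lifted $\chi_s^{-1}$, $\tilde\rho_0$, this smooth flow, and the lifted $\chi_0$, we realize $\tilde\rho_s$ on $L_\U$ as a composite of smooth maps, hence smooth.

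For smoothness of the inverse, I would argue as in part (2) of Proposition~\ref{prop:rho-smooth-in-t}: it suffices to show the derivative of $\tilde\rho_s$ is injective everywhere, since we already have a set-theoretic inverse. Using the decomposition of $T L_\U$ into the part projecting to the vertical bundle $\mathcal V \subseteq T\X_{smooth}$, the line-bundle fiber directions, and the $T\C$ direction, and the analogous decomposition on $L_\P \times \C$, the derivative of $\tilde\rho_s$ at a point over $\pi^{-1}(t)$ is block upper-triangular with diagonal blocks $(\rho_{s,t})_*$ (injective, since $\rho_{s,t}$ is an embedding, or equals $\rho_0$ for $t=0$), an isomorphism on the line-bundle fibers (since $\tilde\rho_{s,t}$ is a bundle isomorphism onto its image), and the identity on $T\C$; hence the whole derivative is injective. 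Therefore $\tilde\rho_s$ is a smooth injective immersion that is a bijection onto its image, so it is a diffeomorphism onto its image. The main obstacle I anticipate is the bookkeeping at the singular fiber $t=0$: one must check that the horizontal lift of $(\widehat Y_s)_{\mathrm{vert}}$ behaves well near $X_0$ and that the $s$-independence $\rho_{s,0}=\rho_0$ propagates to the lifts $\tilde\rho_{s,0}=\tilde\rho_0$, so that the continuity across $t=0$ claimed in Proposition~\ref{prop:rho-smooth-in-t}(1) has its line-bundle analogue; this is where one leans hardest on Proposition~\ref{prop:toric submanifolds don't move} and on Lemma~\ref{lemma:lift-grH}(3).
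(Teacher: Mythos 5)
The paper's proof takes a genuinely different route from yours. Rather than lifting each factor of the decomposition $\rho_{s,t}=\chi_s^{-1}\circ\rho_0\circ\varphi_s\circ\chi_0\vert_{X_t}$, the paper globalizes the $s$-parameter construction directly: it defines $\Theta\colon\X_{smooth}\times[0,\infty)\to\P\times\C$, $(x,s)\mapsto(\rho_{s,\pi(x)}(x),\pi(x))$, sets $(L',\nabla',h')=\Theta^*(L_{\P\times\C},\nabla_{\P\times\C},h_{\P\times\C})$, takes the horizontal lift $\mathcal{Z}$ of $\partial/\partial s$ with respect to $\nabla'$, and lets its time-$s$ flow produce $\tilde\rho_s$. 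Smoothness and invertibility follow because $\mathcal{Z}$ is a single smooth vector field on the (smooth) total space $L'$, and agreement with Lemma~\ref{lemma:lift-rhos} follows because restricting $\Theta$ to each fiber $X_t\times[0,\infty)$ reproduces exactly the map used in \cite[Claim~6.4]{HamKon} to define $\tilde\rho_{s,t}$.

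Your proposal has a genuine gap at the central step. You assert that the lifts $\tilde\chi_s$ ``preserve $\nabla_\P,h_\P$,'' and you need this (together with the analogous claim for $\tilde\varphi_s$) to conclude that your composite preserves the prequantum data, hence coincides with the $\tilde\rho_{s,t}$ from Lemma~\ref{lemma:lift-rhos}. But this is not what Lemma~\ref{lemma:chi-tilde} says; it asserts only that $\nabla_\P$ is the Chern connection for $(L_\P,h_\P,\tilde\chi_s^*\overline{\partial})$, which is a compatibility statement, not invariance. In fact $\tilde\chi_s^*\nabla_\P\neq\nabla_\P$ for generic $s$: if it were preserved, we would get $\omega_\P=\mathrm{curv}(\nabla_\P)=\mathrm{curv}(\tilde\chi_s^*\nabla_\P)=\chi_s^*\omega_\P$, i.e., $\chi_s$ would be a symplectomorphism, whereas property ($\chi$-2) forces only compatibility of $\omega_\P$ with $\chi_s^*J_\P$, not $\chi_s^*\omega_\P=\omega_\P$. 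A parallel problem afflicts your proposed $\tilde\varphi_s$: the flow of the $\nabla_\X$-horizontal lift of $Y_s$ preserves $\nabla_t$ only if $\mathcal{L}_{Y_s}\omega_t=0$, but $Y_s$ is defined by contraction against the $s$-dependent form $\omega_s\vert_V$ and is not $\omega_t$-symplectic, so this fails. Consequently you have no argument that your composite equals $\tilde\rho_{s,t}$. The paper avoids this entirely by taking the horizontal lift with respect to the pulled-back connection $\nabla'$ on the family $\X_{smooth}\times[0,\infty)$, which by construction identifies $\rho_0^*L_\X$ with $\rho_s^*L_\X$ preserving all prequantum data, and then verifies the fiberwise agreement with \cite{HamKon} explicitly. (Your derivative-injectivity argument for the inverse is fine, but it can only be invoked once the map itself is correctly identified and shown smooth.)
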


\begin{proof}
  We recall the construction of $\tilde{\rho}_{s,t}$ from~\cite{HamKon}
  and globalize it to the family $\X_{smooth}$.
Let $\Theta\colon \X_{smooth}\cross [0,\infty) \to \P\cross\C$ be the map
$(x,s)\mapsto \bigl( \rho_{s,\pi(x)}(x), \pi(x)\bigr)$.
Let $(L',\nabla',h')=\Theta^*(L_{\P\cross\C}, \nabla_{\P\cross\C}, h_{\P\cross\C})$.
Note that $L'\vert_{\X_{smooth}\cross\{s_0\}} = \rho_{s_0}^* L_{\X}$.

Let $\mathcal{Z}\in \operatorname{Vect}(L')$ be the horizontal
lift to $L'$ with respect to $\nabla'$ of the vector
field $\dd{}{s}$ on $\X_{smooth}\cross [0,\infty)$. 
Then the flow of $\mathcal{Z}$ through time $s$ induces a
diffomorphism between the bundles 
$\rho_0^* L_{\X}$ and $\rho_{s}^* L_{\X}$ over 
$\X_{smooth}\cross\{0\}$ and $\X_{smooth}\cross\{s\}$. 
This is the
same as a diffeomorphism $L_{\X} \to L_{\rho_{s_0}(\X)}$ lifting the
map $\rho_{s_0}$, and we denote it by $\tilde{\rho}_s$.
Since $\mathcal{U}_{[0,1]}$ is a subset of $\X_{smooth}$ by construction, the map $\tilde{\rho}_s$
is defined on $L_{\mathcal{U}}$,
and is a diffeomorphism onto its image.

If we restrict the map $\Theta$ to the (smooth locus in the)
fibre $X_t\cross [0,\infty)$ we obtain
a map $\Theta_t \colon (x,s) \mapsto \rho_{s,t}(x)$,
which agrees with the map used in~\cite[Claim 6.4]{HamKon} 
to construct the lift of $\rho_{s,t}$ over the submanifold $X_t$.
Furthermore, it is clear 
that $\Theta_t^*(L_{\P\cross\C}, \nabla_{\P\cross\C}, h_{\P\cross\C})$
restricts to $(L',\nabla',h')$ on $X_t\cross [0,\infty)$; 
this agrees with the data used in the construction in~\cite{HamKon}.
Therefore the lifting $\tilde{\rho_s}$ constructed above
agrees on $U_t$ with the map $\tilde{\rho}_{s,t}$
as constructed in~\cite{HamKon},
and the formula given in~\eqref{eq:formula for tilderhos} agrees with the map
$\tilde{\rho}_s$ constructed in the previous paragraph.

\end{proof}

Finally, we analyze the behavior of the diffeomorphisms $\chi_s: \P
\to \P$ of~\eqref{eq:chi_s} with respect to the prequantum data. 
Recall that $L_\P$ is a holomorphic line bundle with respect to
the canonical complex structure $J_\P$ on $\P$ (i.e. its transition
functions are holomorphic), and hence there exists a differential operator
$\overline{\partial}$ defining the space of 
holomorphic sections $H^0(\P, L_\P, \overline{\partial})$ of
$L_\P$ over $(\P, J_\P)$. 
We recall the following \cite[Theorem 5.3(A)]{HamKon}. 

\begin{lemma} (\cite[Theorem 5.3(A)]{HamKon}) \label{lemma:chi-tilde}
There exists a lift $\tilde{\chi}_s$ of $\chi_s$ to
an isomorphism of the line bundle $L_\P$
such that the following diagram commutes 
\[
\xymatrix{ 
L_\P \ar[d] \ar[r]^{\tilde{\chi}_{s}} & L_\P \ar[d] \\
\P \ar[r]^{\chi_{s}} & \P \\
}
\]
and such that the connection $\nabla_\P$ is the canonical Chern connection for
  the Hermitian holomorphic line bundle $(L_\P, h_\P,
  \tilde{\chi}_s^*\overline{\partial})$. 
\end{lemma}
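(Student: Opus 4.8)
Since this lemma is \cite[Theorem 5.3(A)]{HamKon}, one option is simply to quote it; but I would record a short self-contained argument for the statement as written, deferring to \cite{BFMN, HamKon} only for the finer properties (unitarity and smoothness in $s$) actually needed in later sections.

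First I would construct the new holomorphic structure. By ($\chi$-2) the triple $(\P,\omega_\P,\chi_s^*J_\P)$ is K\"ahler, so $\mathrm{curv}(\nabla_\P)$ — which is $\omega_\P$ up to the usual constant — is of type $(1,1)$ with respect to $\chi_s^*J_\P$. Hence the $(0,1)$-part $\bar\partial_s$ of $\nabla_\P$ with respect to $\chi_s^*J_\P$ satisfies $\bar\partial_s^2=0$, because $\bar\partial_s^2$ is the $(0,2)$-component of that curvature; by the Koszul--Malgrange integrability theorem $\bar\partial_s$ therefore defines a holomorphic line bundle structure on $L_\P$ over $(\P,\chi_s^*J_\P)$. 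Since $\nabla_\P$ is a metric connection for $h_\P$ (its parallel transport preserves $h_\P$) and $(\nabla_\P)^{0,1}=\bar\partial_s$ by construction, uniqueness of the Chern connection gives that $\nabla_\P$ is the Chern connection of the Hermitian holomorphic line bundle $(L_\P,h_\P,\bar\partial_s)$.

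Next I would produce the lift. It is enough to find an isomorphism $\tilde\chi_s\colon L_\P\to L_\P$ of complex line bundles covering $\chi_s$ with $\tilde\chi_s^*\bar\partial=\bar\partial_s$; then $(L_\P,h_\P,\tilde\chi_s^*\bar\partial)=(L_\P,h_\P,\bar\partial_s)$ and the previous paragraph completes the proof, while commutativity of the diagram is automatic. Since $\chi_s\colon(\P,\chi_s^*J_\P)\to(\P,J_\P)$ is a biholomorphism, such a $\tilde\chi_s$ is the same datum as a holomorphic isomorphism from $(L_\P,\bar\partial_s)$ to the holomorphic pullback $\chi_s^*(L_\P,\bar\partial)$, both regarded over $(\P,\chi_s^*J_\P)$. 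These two holomorphic line bundles have the same first Chern class: that of $(L_\P,\bar\partial_s)$ is $c_1(L_\P)$, since its underlying topological bundle is $L_\P$; that of $\chi_s^*(L_\P,\bar\partial)$ is $\chi_s^*c_1(L_\P)=c_1(L_\P)$, since $\chi_s$ is smoothly isotopic to the identity (by ($\chi$-1) and continuity of the family $\{\chi_s\}$) and so acts as the identity on $H^2(\P;\Z)$. Because $(\P,\chi_s^*J_\P)$ is biholomorphic to $\p^N$, on which holomorphic line bundles are classified by their first Chern class, the two bundles are holomorphically isomorphic, and any such isomorphism provides the desired $\tilde\chi_s$.

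The main obstacle — which is what \cite{BFMN} and \cite{HamKon} actually address — is not this bare existence but the construction of a lift $\tilde\chi_s$ that is additionally an $h_\P$-isometry and depends smoothly on $s$, as required by the convergence arguments later in the paper. One cannot take $\tilde\chi_s$ unitary for the naive choices of metric, since $(L_\P,h_\P,\bar\partial_s)$ and $\chi_s^*(L_\P,h_\P,\bar\partial)$ have curvatures $\omega_\P$ and $\chi_s^*\omega_\P$ respectively, which differ because $\chi_s$ is not symplectic; the correct lift is instead built from an explicit construction adapted to the special form of the $\chi_s$, for which I would simply invoke \cite[Theorem 5.3(A)]{HamKon}.
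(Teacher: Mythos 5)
The paper itself offers no proof of this lemma; it simply quotes \cite[Theorem 5.3(A)]{HamKon}. Your proposal goes further and supplies a correct, self-contained soft argument for the statement as literally written, and it is worth noting the two ingredients you use are exactly the right ones. First, using ($\chi$-2) to conclude that $\mathrm{curv}(\nabla_\P)$ is of type $(1,1)$ for $\chi_s^*J_\P$, hence $\bar\partial_s := (\nabla_\P)^{0,1}$ is integrable (Koszul--Malgrange) and $\nabla_\P$ is its Chern connection: this is the standard way to see that $\nabla_\P$ induces the holomorphic structure on $L_\P$ over $(\P,\chi_s^*J_\P)$, and it cleanly reduces the lemma to producing a $\tilde\chi_s$ with $\tilde\chi_s^*\bar\partial=\bar\partial_s$. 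Second, the existence of such a $\tilde\chi_s$ via equality of first Chern classes and the classification $\mathrm{Pic}(\p^N)\cong H^2(\p^N;\Z)$ is valid, including the observation that $\chi_s$ is isotopic to the identity and so acts trivially on $H^2$.

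The one caveat, which you already flag explicitly, is that this abstract existence argument does not pin down $\tilde\chi_s$; later sections of the paper implicitly use a \emph{specific} lift with additional regularity (continuity in $s$, and the compatibility with $h_\P$ needed in the $L^1$-norm estimates). Your argument cannot produce those features by pure softness, and your diagnosis that this is the genuine content of \cite[Theorem 5.3(A)]{HamKon}---and that one cannot hope for a unitary lift because the curvatures $\omega_\P$ and $\chi_s^*\omega_\P$ differ---is accurate. So: your proof is correct for the lemma as stated and is strictly more informative than the paper's bare citation, while correctly deferring the refined construction to the reference.
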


From now on we notate 
\[
\overline{\partial}_s := \tilde{\chi}_s^*(\overline{\partial}) 
\]
and denote by $H^0(\P, L_\P, \overline{\partial}_s)$ the corresponding
space of sections.
From the definitions of the respective holomorphic structures, it is
immediate that the pullback by $\tilde{\chi}_s^*$ of a section which
is holomorphic with respect to $\overline{\partial}$ is holomorphic
with respect to $\overline{\partial}_s$. 

\begin{lemma} 
The pullback $\tilde{\chi}_s^*(\sigma)$ of a
 section $\sigma \in H^0(\P, L_\P, \overline{\partial})$
 is an element of $H^0(\P, L_\P,
 \overline{\partial}_s)$.
\end{lemma}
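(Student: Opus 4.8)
The plan is to simply unwind the definition of $\overline{\partial}_s$ as the conjugate of $\overline{\partial}$ by the bundle isomorphism $\tilde{\chi}_s$ and to observe that this conjugation is precisely what makes the pullback operation on sections intertwine the two Dolbeault-type operators.

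First I would fix notation for the pullback of a smooth section $\tau$ of $L_\P$ by $\tilde{\chi}_s$: since $\tilde{\chi}_s \colon L_\P \to L_\P$ is a bundle isomorphism covering the diffeomorphism $\chi_s \colon \P \to \P$, one sets $\tilde{\chi}_s^*\tau := \tilde{\chi}_s^{-1} \circ \tau \circ \chi_s$, which is again a smooth section of $L_\P$; the assignment $\tau \mapsto \tilde{\chi}_s^*\tau$ is a bijection on smooth sections with inverse $(\tilde{\chi}_s^{-1})^*$. The same recipe, using $\chi_s^*$ on the form component, gives a pullback on $L_\P$-valued $(0,1)$-forms. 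With this in place, $\overline{\partial}_s := \tilde{\chi}_s^*(\overline{\partial})$ is by definition the operator $\tau \mapsto \tilde{\chi}_s^*\big(\overline{\partial}\big((\tilde{\chi}_s^{-1})^*\tau\big)\big)$, i.e.\ $\overline{\partial}$ conjugated by the pullback; that this is a genuine holomorphic structure, compatible with the complex structure $\chi_s^* J_\P$ and with $\nabla_\P$ as its Chern connection, is exactly the content of Lemma~\ref{lemma:chi-tilde}.

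The verification is then a one-line computation: for $\sigma \in H^0(\P, L_\P, \overline{\partial})$ we have $\overline{\partial}_s\big(\tilde{\chi}_s^*\sigma\big) = \tilde{\chi}_s^*\big(\overline{\partial}\big((\tilde{\chi}_s^{-1})^*\big(\tilde{\chi}_s^*\sigma\big)\big)\big) = \tilde{\chi}_s^*\big(\overline{\partial}\sigma\big) = \tilde{\chi}_s^*(0) = 0$, so $\tilde{\chi}_s^*\sigma$ lies in $H^0(\P, L_\P, \overline{\partial}_s)$. Equivalently, and without writing any conjugation, one can say that $\overline{\partial}_s$ was defined so that $\tilde{\chi}_s^* \colon (\P, L_\P, \overline{\partial}) \to (\P, L_\P, \overline{\partial}_s)$ is a biholomorphic bundle isomorphism, hence carries holomorphic sections to holomorphic sections — this is the viewpoint already flagged in the sentence preceding the lemma.

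There is essentially no obstacle here. The only points requiring minor care are the bookkeeping of directions (pullback along $\chi_s$ versus along $\chi_s^{-1}$, so that $(\tilde{\chi}_s^{-1})^*$ is a genuine two-sided inverse of $\tilde{\chi}_s^*$ on sections and on $(0,1)$-forms) and the remark that $\tilde{\chi}_s^*\sigma$ is smooth, which is immediate since $\tilde{\chi}_s$ and $\chi_s$ are diffeomorphisms.
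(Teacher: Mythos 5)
Your proof is correct and matches the paper's approach: the paper itself gives no written proof, only the sentence preceding the lemma stating that the claim is ``immediate from the definitions,'' and your one-line conjugation computation is precisely the unwinding of that remark.
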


In fact, in our arguments below we will need to pull back sections to
the original variety $X$ via the diagram 
\begin{equation}\label{eq:big diagram for pulling back sections}
\xymatrix{ 
L \ar[r]^{\tilde{\phi}_{1-t}} \ar[d] & L_t \ar[r]^{\tilde{\rho}_{s,t}}
  \ar[d] & L_\P \ar[d] \ar[r]^{\tilde{\chi}_{s}} & L_\P \ar[d] \\
X \ar[r]^{\phi_{1-t}} & X_t \ar[r]^{\rho_{s,t}} & \P \ar[r]^{\chi_{s}} & \P \\
}
\end{equation}
obtained by composing the three diagrams above. We record the
following. 

\begin{lemma}
  Let $s, t \in \R$ with $0 \leq s< \infty$ and $0
  \leq t < 1$. Following notation as above, the pullbacks 
  $\tilde{\chi}^*_s$, $\tilde{\rho}_{s,t}^*$ and
  $\tilde{\phi}^*_{1-t}$ preserve holomorphic sections, so in
  particular there is a well-defined map 
\[
\tilde{\phi}^*_{1-t} \circ \tilde{\rho}_{s,t}^* \circ
\tilde{\chi}^*_s: H^0(\P, L_\P, \overline{\partial}_0) \to H^0(X, L,
\overline{\partial}_{s,t})
\]
where $\overline{\partial}_0$ denotes the
standard holomorphic structure on $(\P=\p^N, \omega_\P, J_\P)$. 
\end{lemma}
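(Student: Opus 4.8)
The plan is to push a section $\sigma \in H^0(\P, L_\P, \overline{\partial}_0)$ through the three arrows of~\eqref{eq:big diagram for pulling back sections}, checking at each stage that holomorphicity is preserved relative to the complex and holomorphic structures that naturally occur there, and then to identify the holomorphic structure produced at the end with $\overline{\partial}_{s,t}$. Here I take $\overline{\partial}_{s,t}$ to be the \emph{unique} holomorphic structure on $L$ over $(X, J_{s,t})$ for which $\nabla$ is the Chern connection of $h$; this exists and is unique because $(X,\omega,J_{s,t})$ is K\"ahler by Lemma~\ref{lemma:pairs Kahler} and $\operatorname{curv}(\nabla)=\omega$ is of type $(1,1)$ with respect to $J_{s,t}$. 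Apart from this, the only inputs are two standard facts of complex geometry: (i) the restriction of a holomorphic section of a Hermitian holomorphic line bundle to a complex submanifold is holomorphic, and the Chern connection restricts to the Chern connection; and (ii) if $f\colon Y \to Z$ is a biholomorphism and $\tilde f\colon E \to F$ a bundle isomorphism covering $f$ that preserves the Hermitian metrics and connections, then $\tilde f^{*}$ carries holomorphic sections to holomorphic sections and the Chern connection to the Chern connection.

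For $\tilde{\chi}_s^{*}$ this is precisely the lemma immediately preceding: it maps $H^0(\P, L_\P, \overline{\partial}_0)$ into $H^0(\P, L_\P, \overline{\partial}_s)$, and by Lemma~\ref{lemma:chi-tilde} the operator $\overline{\partial}_s = \tilde{\chi}_s^{*}\overline{\partial}_0$ is the Dolbeault operator of a holomorphic structure on $L_\P$ over $(\P, \chi_s^{*}J_\P)$ whose Chern connection with respect to $h_\P$ is $\nabla_\P$ (using that $\chi_s\colon(\P,\chi_s^*J_\P)\to(\P,J_\P)$ is biholomorphic). Next, for $\tilde{\rho}_{s,t}^{*}$ with $0<t<1$: Proposition~\ref{prop:embedding of submanifold}(b) says $\rho_{s,t}(X_t)$ is a complex submanifold of $(\P, \chi_s^{*}J_\P)$, so by~(i) the restriction of a section in $H^0(\P, L_\P, \overline{\partial}_s)$ to $L_\P\vert_{\rho_{s,t}(X_t)}$ is holomorphic; transporting back along the metric- and connection-preserving bundle isomorphism $\tilde{\rho}_{s,t}$ of Lemma~\ref{lemma:lift-rhos} lands us in $H^0(X_t, L_t, \overline{\partial}')$, where $\overline{\partial}'$ is the holomorphic structure on $L_t$ compatible with $(h_t, \nabla_t)$ and the complex structure $\rho_{s,t}^{*}\bigl(\chi_s^{*}J_\P\vert_{\rho_{s,t}(X_t)}\bigr)$ on $X_t$. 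Finally, for $\tilde{\phi}_{1-t}^{*}$: by the defining formula~\eqref{eq:definition Jst} of $J_{s,t}$ the map $\phi_{1-t}\colon (X\cong X_1, J_{s,t}) \to \bigl(X_t,\ \rho_{s,t}^{*}(\chi_s^{*}J_\P\vert_{\rho_{s,t}(X_t)})\bigr)$ is biholomorphic --- it is a diffeomorphism by Proposition~\ref{prop-grad-Hamiltonian}(a) --- while $\tilde{\phi}_{1-t}$ preserves connections and Hermitian metrics by Lemma~\ref{lemma:lift-grH}(1). Hence by~(ii), $\tilde{\phi}_{1-t}^{*}$ sends $H^0(X_t, L_t, \overline{\partial}')$ to the holomorphic sections of $L$ for a holomorphic structure compatible with $(h,\nabla)$ and $J_{s,t}$; by the uniqueness noted above this structure is $\overline{\partial}_{s,t}$. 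Composing the three maps yields
\[
\tilde{\phi}^*_{1-t} \circ \tilde{\rho}_{s,t}^* \circ \tilde{\chi}^*_s \colon H^0(\P, L_\P, \overline{\partial}_0) \longrightarrow H^0(X, L, \overline{\partial}_{s,t}).
\]

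The step where I expect genuine work, rather than formal bookkeeping, is the endpoint $t=0$: there $X_0$ is only a (possibly singular) toric variety, $\phi_1\colon X \to X_0$ is merely continuous and is a symplectomorphism only on the dense open set $U$ (Proposition~\ref{prop-grad-Hamiltonian}(c)), with $\rho_{s,0}=\rho_0$ on the smooth locus of $X_0$ by Proposition~\ref{prop:toric submanifolds don't move} (so the relevant structures there reduce to the standard ones on $X_0$), and correspondingly $J_{s,0}$ and $\overline{\partial}_{s,0}$ are \emph{a priori} only controlled over $U$. The argument above then produces a section of $L$ holomorphic over $U$, and to promote it to a global holomorphic section one uses that $U$ is dense, that the section is bounded in the Hermitian norm near $X\setminus U$ (it is pulled back from a smooth section over $\P$ along metric-preserving maps and the continuous extensions of Lemma~\ref{lemma:lift-grH}(2)--(3)), and a Riemann-type removable-singularity argument across $X\setminus U$, together with the continuity and smoothness statements of Propositions~\ref{prop:rho-smooth-in-t} and~\ref{prop:rho-tilde-smooth-in-t}. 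For all $0<t<1$, and at $t=1$ where $\phi_0$ is the identity, the statement is an essentially formal consequence of the three diagrams and facts~(i)--(ii).
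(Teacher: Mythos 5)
Your proof is correct and takes essentially the same approach as the paper's, which is a one-line observation that $\overline{\partial}_{s,t}$ is by definition the Dolbeault operator of the pullback complex structure $(\chi_s \circ \rho_{s,t} \circ \phi_{1-t})^* J_\P$, so the composite pullback of a holomorphic section is automatically $\overline{\partial}_{s,t}$-holomorphic; your version is more explicit and, usefully, records the Chern-connection characterization of $\overline{\partial}_{s,t}$ that reconciles this pullback definition with the wording in the statement of the main theorem. Your concern about $t=0$ is well-founded but is pointing at a typo: the range $0 \leq t < 1$ stated in the lemma is inconsistent with the definitions of $J_{s,t}$ and $\sigma^m_{s,t}$ (which use $0 < t \leq 1$) and with the fact that $J_{s,0}$ is defined only on the dense open set $U$, so the intended range is $0 < t \leq 1$ and the $t=0$ paragraph of your argument is not needed.
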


\begin{proof} 
The differential operator 
$\overline{\partial}_{s,t}$ is associated to the complex structure 
$\tilde{\phi}_{1-t}^* \tilde{\rho}_{s,t}^* \tilde{\chi}_s^*(J_\P)$
obtained by pullback, so the result is immediate from the definitions. 
\end{proof}

We also record the important fact that the Bohr-Sommerfeld fibres
of the integrable system on $X$ correspond to those on $X_0$.
In particular, the Bohr-Sommerfeld set is the set $W_0$
defined in~\eqref{eq:def W_0}.

\begin{proposition}\label{prop:w0-bohr-s}
  Let $W_0 := \iota^*(\Delta_\P \cap \Z^N) \subseteq \Delta_0 \cap \Z^n$
  as defined in~\eqref{eq:def W_0}.
  Then the Bohr-Sommerfeld fibres in $X$ are precisely the preimages of
  points in $W_0$ under the integrable system $\mu$ constructed
  in~\eqref{eq:definition mu_t}.
\end{proposition}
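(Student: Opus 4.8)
The plan is to reduce the Bohr--Sommerfeld computation on $X$ to the corresponding computation on the toric central fibre $X_0$, using the fact that the integrable system $\mu$ on $X$ was constructed in~\eqref{eq:definition mu_t} by pulling back the standard toric integrable system $\mu_0$ on $X_0$ via the gradient-Hamiltonian flow $\phi_1: X \cong X_1 \to X_0$, together with the observation that this flow lifts to the prequantum line bundles (Lemma~\ref{lemma:lift-grH}). First I would recall the definition: a fibre $\mu^{-1}(m)$ is Bohr--Sommerfeld precisely when the restriction of $(L,\nabla)$ to that fibre admits a nonzero leafwise-flat (covariantly constant along the leaf) section defined on the whole fibre. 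Since $\phi_1$ restricts to a symplectomorphism $U \xrightarrow{\sim} U_0$ on open dense subsets (Proposition~\ref{prop-grad-Hamiltonian}(c)) intertwining $\mu$ with $\mu_0$, and since by Lemma~\ref{lemma:lift-grH}(2) the lift $\tilde{\phi}_1$ carries $(\nabla_1,h_1)$ to $(\nabla_0,h_0)$, holonomy of $\nabla$ around loops in a leaf $\mu^{-1}(m)$ matches the holonomy of $\nabla_0$ around the corresponding loop in $\mu_0^{-1}(m)$, at least for the generic (torus-orbit) fibres lying in $U$. Thus the Bohr--Sommerfeld condition transfers: $\mu^{-1}(m)$ is Bohr--Sommerfeld for $(X,\mu)$ iff $\mu_0^{-1}(m)$ is Bohr--Sommerfeld for $(X_0,\mu_0)$.

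The second ingredient is the classical toric computation of the Bohr--Sommerfeld set on $X_0$. For the ambient projective space $\P \cong \p^N$ with its standard Hamiltonian $T_\P$-action, it is well known (see e.g.~\cite{Ham-toric}) that the Bohr--Sommerfeld fibres of the moment map $\mu_\P$ are exactly the fibres over $\Delta_\P \cap \Z^N$, in bijection with the natural monomial basis of $H^0(\P, L_\P)$. I would then argue that the Bohr--Sommerfeld fibres of $X_0 \subseteq \P$, which is by assumption (e) the closure of the $\T_0$-orbit of $[1:\cdots:1]$, are exactly the fibres of $\mu_0 = \iota^* \circ \mu_\P|_{X_0}$ over $\iota^*(\Delta_\P \cap \Z^N) = W_0$. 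This follows because the line bundle $L_0 = L_\X|_{X_0}$ with connection $\nabla_0$ is the restriction of $(L_\P, \nabla_\P)$ to $X_0$, and the curvature/holonomy of the restricted connection over a $T_0$-orbit fibre is governed by the pairing of $T_0$-weights with the lattice $(\t_0^*)_\Z$; a fibre over a point $n \in \Delta_0$ supports a global flat section iff all relevant holonomies are trivial, which happens precisely when $n$ lifts to a lattice point of $\Delta_\P$ under $\iota^*$, i.e. $n \in W_0$. (Here assumption (g), surjectivity of $\iota^*$ on lattices, ensures the compatibility of the lattice structures so that this statement is clean.) Combining the two reductions gives that the Bohr--Sommerfeld fibres in $X$ are exactly $\mu^{-1}(W_0)$.

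The main obstacle I anticipate is the singular fibres and the non-generic locus: $\phi_1$ is only a symplectomorphism on the open dense set $U$, and both $X_0$ and the degeneration may be singular, so the holonomy-transfer argument is clean only for fibres $\mu^{-1}(m)$ lying inside $U$. One must check that the potentially problematic fibres --- those over boundary points of $\Delta_0$ or over $W_0$-points whose fibres meet $X \setminus U$ --- are handled correctly; by Theorem~\ref{theorem:HarKav}(3), $\mu^{-1}(\Delta_0^\circ) \subseteq U$, so all interior lattice-point fibres are unambiguous, and for boundary points one invokes the continuity of $\phi_1$ (Proposition~\ref{prop-grad-Hamiltonian}(c)) together with the fact that the Bohr--Sommerfeld condition is closed in an appropriate sense, or else argues directly on $X_0$ using the toric structure near its boundary strata. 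A secondary technical point is being careful that "Bohr--Sommerfeld" is insensitive to the choice of flat connection representative within the gauge class, which is immediate since $\tilde{\phi}_1$ and $\tilde{\rho}$-type lifts were constructed to preserve $(\nabla,h)$ exactly, not just up to gauge.
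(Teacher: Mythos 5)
Your proof follows the same two-step reduction as the paper: (1) transport the Bohr--Sommerfeld condition from $X$ to $X_0$ via the prequantum lift of the gradient--Hamiltonian flow (Lemma~\ref{lemma:lift-grH}), and (2) identify the Bohr--Sommerfeld set of $(X_0,L_0,\mu_0)$ with $W_0$. Your step (1) is essentially the paper's argument, written out with more care about the open dense sets $U$, $U_0$ and about what continuity of $\phi_1$ buys for boundary fibres. The paper disposes of step (2) in a single sentence with no justification; you try to supply one via a holonomy computation, and this is where a gap appears.

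Carried out carefully, that holonomy computation does not deliver the stated condition. For an interior point $m$, the fibre $\mu_0^{-1}(m)$ is a single $T_0$-orbit lying in some $T_\P$-orbit $\mu_\P^{-1}(\tilde p)$, and the holonomy of $\nabla_0 = \nabla_\P\vert_{X_0}$ around the loop generated by $\xi\in(\t_0)_{\Z}$ is $e^{\pm 2\pi i\langle \tilde p, \iota(\xi)\rangle} = e^{\pm 2\pi i\langle m,\xi\rangle}$, since $\iota^*\tilde p = m$. Triviality for all $\xi$ yields $m\in(\t_0^*)_{\Z}\cong\Z^n$, i.e.\ $m\in\Delta_0\cap\Z^n$ --- and nothing finer. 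You then assert this happens ``precisely when $n$ lifts to a lattice point of $\Delta_\P$,'' i.e.\ $m\in W_0$, but that is a \emph{strictly stronger} condition: by hypothesis (g) every $m\in\Z^n$ lifts to some $\tilde m\in\Z^N$, yet $\tilde m$ need not lie in $\Delta_\P$. The paper's own remark just after~\eqref{eq:def W_0} (the closure of $t\mapsto[t^2:t^3:1]$ in $\p^2$) gives $W_0=\{0,2,3\}\subsetneq\{0,1,2,3\}=\Delta_0\cap\Z$; for $m=1$ the holonomy is $e^{\pm 2\pi i}=1$, so the fibre passes your holonomy test although $1\notin W_0$. Thus your reasoning actually establishes that the interior Bohr--Sommerfeld set is $\Delta_0^\circ\cap\Z^n$, which can be strictly larger than $W_0$ when $X_0$ is non-normal. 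In fairness, the paper's own proof asserts the $W_0$ identification without argument, so the missing step is shared; but the holonomy count you wrote down does not by itself establish what Proposition~\ref{prop:w0-bohr-s} claims, and some additional input (e.g.\ normality of $X_0$, under which $W_0=\Delta_0\cap\Z^n$, or a refined argument about what ``fibre'' means over points in $(\Delta_0\cap\Z^n)\setminus W_0$) would be needed to close it.
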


\begin{proof}
Since the gradient-Hamiltonian flow lifts to the line bundle
preserving the connection, 
it follows that the gradient-Hamiltonian flow maps Bohr-Sommerfeld
fibres in $X$ to Bohr-Sommerfeld fibres in $X_0$.
Since the Bohr-Sommerfeld fibres of the torus moment
map are the preimages of $W_0$, and
the integrable system $\mu\colon X \to \R^n$ was constructed by pulling back
the moment map for the torus action on $X_0$,
we obtain the result.
\end{proof}

\subsection{Varying bases of sections $\{\sigma^m_{s,t}\}$ of
  $H^0(X, L, \overline{\partial}_{s,t})$}\label{subsec:varying bases}

Our main result, Theorem~\ref{theorem:main}, asserts the existence of
a basis of sections $\{\sigma^m_s\}$, indexed by $m \in W_0$ and
dependent on a real parameter $s$, where each $\sigma^m_s$ is
holomorphic with respect to the complex structure $J_s$. In this
section, we take a step in this direction by using the results of Section~\ref{subsec:lifts} to
define sections $\sigma^m_{s,t}$ in 
$H^0(X,L,\overline{\partial}_{s,t})$.

The sections $\sigma^m_{s,t}$ are constructed using certain standard
sections of $(\P, L_\P)$ which we now recall. It is well-known that
$\P$ is a toric variety with respect to the standard action of its
torus $T_\P$. For any integer lattice point in $\Delta_\P \cap \Z^N$,
there is a well-known method (see e.g. \cite{Ham-toric})
to associate to it a holomorphic section in
$H^0(\P, L_\P, \overline{\partial}_0)$.  
In fact, this association
yields a bijective correspondence between $\Delta_\P \cap \Z^N$ and a
basis for $H^0(\P, L_\P, \overline{\partial}_0)$ which we denote as 
\[
\tilde{m} \in \Delta_\P \cap \Z^N \mapsto \sigma^{\tilde{m}} \in
H^0(\P, L_\P, \overline{\partial}_0).
\]
Now recall that $W_0 := \iota^*(\Delta_\P \cap \Z^N)$ is defined to be
precisely the lattice points in $\Delta_0$ which lie in the image of
$\iota^*$ of $\Delta_\P \cap \Z^N$. Thus for any $m \in W_0$, by
assumption 
there exists a preimage $\tilde{m}$ of $m$ under
$\iota^*$.

We can now define our set of sections $\sigma^m_{s,t}$. The sections depend on a choice of preimage
$\tilde{m}$ for each $m \in W_0$, but their essential properties -
such as those asserted in Theorem~\ref{theorem:main} - are independent
of these choices. For this reason and for simplicity we suppress this
choice from the notation. Specifically, we have the following. 
(It may be helpful for
the reader to refer to the diagram~\eqref{eq:big diagram for pulling
  back sections}.) 

\begin{definition}\label{definition:sigma s t}
For each $m \in W_0$, let $\tilde{m} \in \Delta_\P \cap \Z^N$ denote a
fixed choice of preimage of $m$ under $\iota^*$. Let $s, t \in \R$
with $0 \leq s < \infty$ and $0 < t \leq 1$. We define 
\begin{equation}
  \label{eq:def sections}
  \sigma^{m}_{s,t} := \tilde{\phi}^*_{1-t} \tilde{\rho}^*_{s,t}
  \tilde{\chi}^*_s \sigma^{\tilde{m}} \in H^0(X, L,
  \overline{\partial}_{s,t}). 
\end{equation}
\end{definition}

In the next section we will find an appropriate
function $t=t(s)$ so that the bases $\{\sigma^m_s :=
\sigma^m_{s,t=t(s)}\}$ will satisfy the convergence conditions asserted in
Theorem~\ref{theorem:main} with respect to the complex structures $J_s
:= J_{s,t=t(s)}$ and $\overline{\partial}_s :=
\overline{\partial}_{s,t=t(s)}$.

\section{Proof of the main theorem}\label{sec:proof}

We now proceed to a proof of the main result of this manuscript,
Theorem~\ref{theorem:main}. Much of this section is devoted to proving
results about the sections $\sigma^m_{s,t}$ defined in
Section~\ref{subsec:varying bases}, under certain hypotheses on the
parameters $s$ and $t$. At the end of this section we choose an
appropriate function $t=t(s)$ so that the sections $\sigma^m_s := \sigma^m_{s,t(s)}$
depend only on the single parameter $s$ and have the correct
convergence properties. 

We begin our discussion with a statement about supports. 
Specifically, part of the assertion of Theorem~\ref{theorem:main} is that a certain
(normalized) 
section 
weakly 
converges to a dirac-delta
function on the corresponding Bohr-Sommerfeld fiber. 
In particular,
the support of $\sigma^m_{s,t=t(s)}$ must concentrate on a
neighborhood of the Bohr-Sommerfeld fiber as $s$ gets large. We make
this precise in the proposition below, for which we need some
preliminaries. 
Let $m \in W_0 \subset \t_0^*$. For a real number
$\eta>0$, let $B_\eta(m)$ denote the open ball of radius $\eta$ around
$m$ with respect to the usual metric on $\t_0^*$. 
We introduce the following notation:
\begin{equation}\label{eq:definition B_eta}
B_\eta(m) := \mu^{-1}(B_\eta(m)), \quad 
B_{\eta,t}(m) := \mu_t^{-1}(B_\eta(m)), \quad
B_{\eta,0}(m) := \mu_0^{-1}(B_\eta(m))
\end{equation}
where $\mu, \mu_t$ and $\mu_0$ are the moment maps for the
integrable systems on $X, X_t$ and $X_0$ respectively. When the point
$m$ is clear from context, we sometimes write $B_{\eta,t} =
B_{\eta,t}(m)$, etc. 
We also let $d(vol)$ denote 
the symplectic volume form on $X_t$ and $\rho_{s,t}(X_t)$ for all
$s$ and $t$; since the
relevant maps between these spaces preserve symplectic structures, the
ambiguity in this notation does not pose problems.
We let $\sabs{\cdot}$ denote the norm with respect to the
hermitian metric on all the line bundles;
again, all relevant maps preserve the hermitian metric so there
is no ambiguity.
We let $\abs{\cdot}_{L^1(\cdot)}$ 
denote the $L^1$-norm of a section over some space;
for the sake of space and readability we will occasionally omit the
explicit mention of the space in the notation and write simply $\abs{\cdot}$.

We can now state and prove the following.

\begin{proposition}\label{prop:support on fibers}
Let $m \in W_0$ be an interior point and let $\sigma^m_{s,t} \in H^0(X, L,
\overline{\partial}_{s,t})$ be the section defined in~\eqref{eq:def sections}. 
Then there exists 
a continuous function $t' = t'(s): [0,\infty) \to [0,1]$ such that
for every $\epsilon>0$ and $\eta>0$, there exists $s_0 > 0$ such that 
\begin{equation}\label{eq:support on fibers estimate}
\int_{X \setminus B_\eta} \bigg\lvert \frac{\sigma^m_{s, t}}
    {\abs{\sigma^m_{s, t}}_{L^1(X)}} \bigg\rvert d(vol) < \epsilon
\end{equation}
for all $s > s_0$ and $0 \leq t \leq t'(s)$, and moreover, $\lim_{s
  \to \infty} t'(s) = 0$. 

\end{proposition}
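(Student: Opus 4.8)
The plan is to transport the concentration statement on $X$ back to the toric special fibre $X_{0}$, where it becomes the toric situation treated in \cite{BFMN}, by letting the degeneration parameter $t$ tend to $0$, and then to extract $t'(s)$ by a diagonal argument. For the first step I would unwind Definition~\ref{definition:sigma s t} and diagram~\eqref{eq:big diagram for pulling back sections}: since the line-bundle maps $\tilde\chi_{s}$, $\tilde\rho_{s,t}$, $\tilde\phi_{1-t}$ preserve the Hermitian norms while $\rho_{s,t}$ and $\phi_{1-t}$ are symplectomorphisms onto their images, for any measurable $A\subseteq X$ we have
\[
\int_{A}\lvert\sigma^{m}_{s,t}\rvert\,d(vol)=\int_{\rho_{s,t}(\phi_{1-t}(A))}\lvert\sigma^{\tilde m}\rvert\circ\chi_{s}\,d(vol),
\]
the right-hand integral being over a submanifold of $(\P,\omega_{\P},\chi_{s}^{*}J_{\P})$ with its induced symplectic volume. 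Using $\mu=\phi_{1-t}^{*}\mu_{t}$, $\mu_{t}=\phi_{t}^{*}\mu_{0}$, the fact that $\phi_{t}\colon U_{t}\to U_{0}$ is a symplectomorphism (Proposition~\ref{prop-grad-Hamiltonian}), and discarding the measure-zero sets $X\setminus U$ and $X_{0}\setminus U_{0}$, the left-hand side of~\eqref{eq:support on fibers estimate} becomes
\[
R_{\eta}(s,t):=\int_{X\setminus B_{\eta}}\frac{\lvert\sigma^{m}_{s,t}\rvert}{\abs{\sigma^{m}_{s,t}}_{L^{1}(X)}}\,d(vol)=\frac{\int_{U_{0}\setminus B_{\eta,0}}\lvert\sigma^{\tilde m}\rvert\circ\chi_{s}\circ\rho_{s,t}\circ\phi_{t}^{-1}\,d(vol)}{\int_{U_{0}}\lvert\sigma^{\tilde m}\rvert\circ\chi_{s}\circ\rho_{s,t}\circ\phi_{t}^{-1}\,d(vol)} ,
\]
where we use that $m$ is an interior point so that $B_{\eta,0}\subset U_{0}$ for $\eta$ small (it suffices to treat small $\eta$, since $R_{\eta}$ is non-increasing in $\eta$), and the denominator is positive because $\sigma^{\tilde m}$ does not vanish identically on $X_{0}$.

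The second, and main, step is the limit $t\to0$. By Propositions~\ref{prop:rho-smooth-in-t} and~\ref{prop:rho-tilde-smooth-in-t}, Lemma~\ref{lemma:lift-grH}(3), and Proposition~\ref{prop:toric submanifolds don't move}, the map $(s,y,t)\mapsto\rho_{s,t}(\phi_{t}^{-1}(y))$ is continuous on $[0,\infty)\times U_{0}\times[0,1]$ and equals $(s,y)\mapsto\rho_{0}(y)=y$ at $t=0$; since $\lvert\sigma^{\tilde m}\rvert\circ\chi_{s}$ is a bounded continuous function on $\P$ depending continuously on $s$ and $U_{0}$ has finite symplectic volume, dominated convergence shows that $R_{\eta}$ extends to a jointly continuous function on $[0,\infty)\times[0,1]$, with
\[
R_{\eta}(s,0)=\frac{\int_{U_{0}\setminus B_{\eta,0}}\lvert\sigma^{\tilde m}\rvert\circ\chi_{s}\,d(vol)}{\int_{U_{0}}\lvert\sigma^{\tilde m}\rvert\circ\chi_{s}\,d(vol)}=:R_{\eta}(s) .
\]
By Proposition~\ref{prop:toric submanifolds don't move}, the restriction to $X_{0}$ of $\lvert\sigma^{\tilde m}\rvert\circ\chi_{s}$ is the Hermitian norm of the standard toric section of $X_{0}$ indexed by $m=\iota^{*}(\tilde m)$, deformed by the family of complex structures that $\{\chi_{s}\}$ induces on $X_{0}$; hence the toric-variety analysis of \cite{BFMN} (see also \cite{HamKon}), applied to $X_{0}$ and the interior point $m$, gives $R_{\eta}(s)\to0$ as $s\to\infty$ for every $\eta>0$. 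I expect this step to be the main obstacle: one must combine the concentration in $s$ (which \cite{BFMN} controls only on the undeformed fibre) with the degeneration of $X_{t}$ to $X_{0}$ as $t\to0$, keeping track of the fact that the maps involved are diffeomorphisms only over the open dense loci $U_{t}$ and that $X_{0}$ may be singular --- which is exactly why one works on $U_{0}$, discards measure-zero sets, and leans on the smoothness-in-$t$ statements and on Proposition~\ref{prop:toric submanifolds don't move}.

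For the last step, choose $S_{1}<S_{2}<\cdots\to\infty$ with $R_{1/k}(s)<\tfrac1{2k}$ for $s\ge S_{k}$. Joint continuity of $R_{1/k}$ and $R_{1/k}(\cdot,0)<\tfrac1{2k}$ on $[S_{k},\infty)$ yield a continuous positive $\tau_{k}\colon[S_{k},\infty)\to(0,1)$ with $R_{1/k}(s,t)<\tfrac1k$ whenever $s\ge S_{k}$ and $0\le t\le\tau_{k}(s)$ (for instance $\tau_{k}(s):=\sup\{\tau\in[0,1]:R_{1/k}(s',t')<\tfrac1k\text{ whenever }s'\ge S_{k},\ \lvert s'-s\rvert\le\tau,\ t'\le\tau\}$ is positive and $1$-Lipschitz). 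Replacing $\tau_{k}$ by $\min(\tau_{1},\dots,\tau_{k})$ (still valid for level $k$) and interpolating between consecutive $S_{k}$'s produces a continuous $t'\colon[0,\infty)\to(0,1]$ with $t'(s)\le\tau_{k}(s)$ and $t'(s)\le\tfrac1k$ for all $s\ge S_{k}$, hence $\lim_{s\to\infty}t'(s)=0$. Finally, given $\epsilon,\eta>0$, pick $k$ with $\tfrac1k<\min(\epsilon,\eta)$ and set $s_{0}=S_{k}$: for $s>s_{0}$ and $0\le t\le t'(s)\le\tau_{k}(s)$ we obtain $R_{\eta}(s,t)\le R_{1/k}(s,t)<\tfrac1k<\epsilon$, which is~\eqref{eq:support on fibers estimate}. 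This last step is routine real analysis.
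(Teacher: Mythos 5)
Your approach is correct in outline but genuinely different from the paper's. The paper works with a fixed $s$, first establishing (via uniform continuity of $\sAbs{\tilde\chi_s^*\sigma^{\tilde m}}$ on $\P$ and the distance estimate of Lemma~\ref{lemma:rho-st-close}) that the integrand on $X$ is pointwise close to the corresponding integrand on $X_0$, and then carrying out explicit constant-chasing through the $L^1$ normalizations (the $C_0, C_1$ estimates in~\eqref{eq:sigma-m-s-close}--\eqref{eq:normalized-estimate-rho-phi-sigma}) before finally appealing to Lemma~\ref{lemma:support-section-converges-X0}. You instead push the whole normalized quantity to a single ratio $R_\eta(s,t)$ of integrals over $U_0$, argue this ratio is jointly continuous in $(s,t)$, identify $R_\eta(s,0)$ with the $X_0$ concentration result, and then extract $t'(s)$ by a $\tau_k/S_k$ diagonal. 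The reformulation as a ratio of integrals lets you avoid the $C_0,C_1$ bookkeeping entirely, and your diagonal construction of $t'(s)$ is both cleaner and more careful about the $\forall\epsilon\,\forall\eta$ quantifier than the paper's invocation of Tao's monotone-function decomposition (which only discusses a single $\epsilon,\eta$ at a time and is vague about how $t_0(s)$ can be made non-increasing while remaining positive).

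The one place you lean on something the paper does not actually provide is the \emph{joint} continuity of $R_\eta$ on $[0,\infty)\times[0,1]$, and in particular continuity in the $s$-direction. Propositions~\ref{prop:rho-smooth-in-t}, \ref{prop:rho-tilde-smooth-in-t}, Lemma~\ref{lemma:lift-grH}(3), and Proposition~\ref{prop:toric submanifolds don't move} each give regularity in $(x,t)$ \emph{for a fixed} $s$, not joint regularity in $(s,x,t)$. Continuity in $s$ requires that $s\mapsto\chi_s$ and $s\mapsto\rho_{s,t}$ are continuous families, which is morally true (the $\chi_s$ are a one-parameter flow in \cite{BFMN} and the $\rho_{s,t}$ solve an ODE in $s$) but is not extracted as a lemma in the paper — and indeed the paper's own proof avoids this by working with one $s$ at a time. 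To make your argument airtight you should either establish this joint continuity explicitly or, following the paper, prove the estimate $R_\eta(s,t)<\epsilon$ fiberwise in $s$ and then argue the existence of a suitable continuous envelope $t'(s)$ without assuming joint continuity (e.g. via your $\tau_k$ construction applied to a $\liminf$ rather than directly to $R_{1/k}$). A second, minor gap: you should confirm that $\sigma^{\tilde m}\vert_{X_t}\not\equiv0$ (i.e. $X_t$ does not lie in the zero divisor of $\sigma^{\tilde m}$) for $t$ in a neighborhood of $0$ so that the denominator of $R_\eta(s,t)$ is positive there; the paper notes this explicitly and it does follow from nonvanishing at $t=0$ plus continuity, but it needs saying.
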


\begin{remark} 
We believe the result of Proposition~\ref{prop:support on fibers} 
holds for boundary points as well as interior points but we restrict ourselves to interior points 
for the purposes of this paper.  
\end{remark} 

The proof of Proposition~\ref{prop:support on fibers} requires
several steps, the first of which states that the analogous result is
true for the 
special fiber $X_0$. We quote the following. 

\begin{lemma}\label{lemma:support-section-converges-X0}
(\cite[Proposition 6.6, (3) and (4)]{HamKon}) 
  Let $m \in W_0$ be an interior point
  and let $\tilde{m} \in \Delta_\P \cap \Z^N$ denote the
  preimage of $m$ fixed in Definition~\ref{definition:sigma s t}. 
  For any $\epsilon>0$ and $\eta>0$, there exists $s_0>0$
  such that 
\[
\int_{X_0 \setminus B_{\eta,0}} \bigg\lvert
\frac{\tilde{\chi}^*_s(\sigma^{\tilde{m}})}
{\abs{\tilde{\chi}^*_s(\sigma^{\tilde{m}})}_{L^1(X_0)}}
\bigg\rvert d(vol) < \epsilon
\]
for all $s>s_0$. 
\end{lemma}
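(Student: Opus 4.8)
Since this is \cite[Proposition 6.6, (3) and (4)]{HamKon}, the plan is to recall the structure of that argument -- which itself adapts \cite{BFMN} -- and to check that it applies verbatim in our setting. The entire computation takes place on the central fibre $X_0$, which by assumption (e) is the closure of the $\T_0$-orbit through $[1:1:\cdots:1]$, hence a (possibly singular) projective toric variety with $T_0$-moment polytope $\Delta_0 = \mu_0(X_0)$. First I would record the two structural facts that turn this into a purely toric problem: (i) by Proposition~\ref{prop:toric submanifolds don't move} the chosen family $\{\chi_s\}$ is compatible with $X_0$, so the lift $\tilde\chi_s$ of Lemma~\ref{lemma:chi-tilde} restricts to a well-defined isomorphism of $L_0 := L_\P\vert_{X_0}$ preserving $h\vert_{X_0}$, and $(\chi_s,\tilde\chi_s)$ restricted to $(X_0, L_0)$ is exactly a Baier--Florentino--Mour\~{a}o--Nunes deformation of the toric K\"ahler data on $X_0$ attached to a strictly convex function on $\Delta_0$; (ii) since $\sigma^{\tilde m}$ is a $\T_\P$-weight section and $\iota^*(\tilde m) = m$, the restriction $\sigma^{\tilde m}\vert_{X_0}$ is, up to a nonvanishing bounded positive factor, the standard toric monomial section of $(X_0, L_0)$ indexed by $m \in \Delta_0 \cap \Z^n$. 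Granting these, the statement becomes precisely the BFMN concentration estimate for the monomial section attached to an interior lattice point of $\Delta_0$.

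Next I would carry out that estimate on the smooth open dense torus orbit $X_0^\circ \subseteq X_0$. In symplectic action--angle coordinates adapted to the $T_0$-action -- so that $\mu_0$ is the action coordinate, with image the open polytope $\Delta_0^\circ$ -- the BFMN computation shows that $\sabs{\tilde\chi_s^*\sigma^{\tilde m}}$ has on $X_0^\circ$ the form of an $s$-independent bounded positive smooth factor times $e^{-s\,\psi^s_m}$, where $\psi^s_m$ is a convex function of the action coordinate with minimum value $0$ attained exactly along $\mu_0^{-1}(m)$, and with transverse Hessian bounded below, uniformly on compact subsets of $\Delta_0^\circ$, by $s$ times a fixed positive-definite form. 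Because $m$ is an interior point, $\mu_0^{-1}(m)$ lies in $X_0^\circ$, and for $\eta$ small its neighbourhood $B_{\eta,0}(m)$ does too; outside that neighbourhood one has $\psi^s_m \geq c(\eta) > 0$, so $\int_{X_0\less B_{\eta,0}} \sabs{\tilde\chi_s^*\sigma^{\tilde m}}\, d(vol) \leq C e^{-s\,c(\eta)}$, while a standard Laplace-method lower bound on $\int_{B_{\eta,0}} \sabs{\tilde\chi_s^*\sigma^{\tilde m}}\, d(vol)$ -- using that the transverse Hessian grows like $s$ -- gives $\abs{\tilde\chi_s^*\sigma^{\tilde m}}_{L^1(X_0)} \geq c' s^{-n/2}$. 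Dividing, the left-hand side of the asserted inequality is $O\!\bigl(s^{n/2} e^{-s\,c(\eta)}\bigr)$, which is $< \epsilon$ once $s$ exceeds some $s_0$.

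Finally I would dispose of the singular and boundary strata: $X_0 \less X_0^\circ$ is a finite union of lower-dimensional torus-invariant subvarieties, hence $d(vol)$-null, so it affects none of the integrals above, and $\sabs{\tilde\chi_s^*\sigma^{\tilde m}}$ is bounded on the compact space $X_0$, so nothing degenerates in passing to the closure. I expect the only genuine subtlety -- and it is exactly the point addressed in \cite{HamKon} -- to be that the action--angle coordinates and the BFMN norm formula are classically developed for smooth Delzant toric manifolds, whereas $X_0$ may be singular; the resolution is to run the whole concentration argument on the smooth open orbit $X_0^\circ$, where (for interior $m$) all of the $L^1$-mass already accumulates, and to absorb the remainder using only the crude global bounds just mentioned (boundedness of the hermitian norm, finiteness of the symplectic volume of $X_0$).
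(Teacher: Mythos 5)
Your proposal is correct and is essentially the paper's approach: the paper proves this lemma purely by citation to \cite[Proposition 6.6, (3) and (4)]{HamKon}, and your sketch is a faithful reconstruction of that cited BFMN-style argument (reduction to the open torus orbit of $X_0$, exponential decay of $\sabs{\tilde{\chi}_s^*\sigma^{\tilde m}}$ away from $\mu_0^{-1}(m)$, a Laplace-type polynomial lower bound on the $L^1$-norm, and dismissal of the singular strata as measure zero), together with the same compatibility inputs (assumption (e) and Proposition~\ref{prop:toric submanifolds don't move}) that the paper invokes when it uses the companion estimate in Lemma~\ref{lemma:estimate on X_0}.
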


The following estimate will also be useful. Roughly, it says that for
any fixed $s>0$ and any $\epsilon>0$, there are fibers $X_t$
of the family which are sufficiently close to $X_0$ such that the two
maps $\rho_{s,t}$ and $\phi_t$ do not differ on $X_t$ by more than distance
$\epsilon$. 

\begin{lemma}\label{lemma:rho-st-close}
 Let $s, t \in \R$ with $0 < s < \infty$ and $0 \leq t
 \leq 1$, let $\rho_{s,t}: X_t \to \P$ be the embedding in
Proposition~\ref{prop:embedding of submanifold}, let 
$\phi_t \colon X_t \to X_0$ denote the gradient-Hamiltonian flow, and
let $\epsilon>0$. Then there exists $t_0>0$ 
such that for any $x\in X_t$ with $0<t<t_0$, 
\[
d_\P(\rho_{s,t}(x), \phi_{t}(x))<\epsilon
\]
where $d_\P$ denotes the distance function on $\P$ induced from the
K\"ahler metric on $(\P, \omega_\P, J_\P)$. 
\end{lemma}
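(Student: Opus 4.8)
\textbf{Proof proposal for Lemma~\ref{lemma:rho-st-close}.}

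The plan is to reduce the statement to a compactness argument on the total space $\X_{[0,1]}$, using the two continuity statements already established: Proposition~\ref{prop-grad-Hamiltonian}(c), which says that the gradient-Hamiltonian flow $\phi_t \colon X_t \to X_0$ extends continuously to the family $\X_{[0,1]}$ and in particular agrees with $\rho_{s,0}$ on $X_0$ (where $\rho_{s,0} = \rho_0$ by Proposition~\ref{prop:toric submanifolds don't move}), and Proposition~\ref{prop:rho-smooth-in-t}(1), which says that the map $\X_{[0,1]} \to \P$ given by $x \mapsto \rho_{s,\pi(x)}(x)$ is continuous. First I would fix $s$ and consider the two maps $G_1, G_2 \colon \X_{[0,1]} \to \P$ defined by $G_1(x) = \rho_{s,\pi(x)}(x)$ and $G_2(x) = \phi_{\pi(x)}(x)$ (the latter being the continuous extension of the fiberwise gradient-Hamiltonian flows to $X_0$, where it equals $\rho_0$ by Proposition~\ref{prop-grad-Hamiltonian}(c) combined with the identification of $\phi_0$ as the identity-like limit). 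Both $G_1$ and $G_2$ are continuous on $\X_{[0,1]}$, and on the fiber $X_0 = \pi^{-1}(0)$ they agree: $G_1|_{X_0} = \rho_{s,0} = \rho_0$ and $G_2|_{X_0} = \rho_0$ as well.

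The key step is then the following: the function $x \mapsto d_\P(G_1(x), G_2(x))$ is continuous and nonnegative on the compact set $\X_{[0,1]}$, and it vanishes identically on the compact subset $X_0$. By continuity and compactness, for any $\epsilon > 0$ the set $\{x \in \X_{[0,1]} : d_\P(G_1(x), G_2(x)) \geq \epsilon\}$ is a closed (hence compact) subset of $\X_{[0,1]}$ disjoint from $X_0$. Its image under the continuous proper map $\pi$ is a compact subset of $(0,1]$, hence bounded away from $0$; let $t_0 > 0$ be a lower bound. Then for any $x \in X_t$ with $0 < t < t_0$ we have $d_\P(\rho_{s,t}(x), \phi_t(x)) = d_\P(G_1(x), G_2(x)) < \epsilon$, which is exactly the claim. (If $\X_{[0,1]}$ fails to be compact in some degenerate situation, one restricts attention to a compact neighborhood of $X_0$ inside $\X_{[0,1]}$, which suffices since the statement is only about small $t$; I would flag this but expect compactness of $\X_{[0,1]} = \pi^{-1}([0,1])$ to hold since $\X$ is projective over $\C$.)

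The main obstacle I anticipate is a careful treatment of the singular locus: the gradient-Hamiltonian vector field $V_\pi$ is only defined on $\X \setminus Z$, so $\phi_t$ is a priori only defined on $X_t \cap (\X \setminus Z)$ and the extension to all of $X_t$ (and the continuity of $G_2$ across $X_0$) relies on Proposition~\ref{prop-grad-Hamiltonian}(c). I would need to check that the continuous extension asserted there patches together into a genuinely continuous map $G_2$ on all of $\X_{[0,1]}$, not just fiberwise — but this is precisely the content of the facts assembled from \cite{HarKav} in Proposition~\ref{prop-grad-Hamiltonian}, together with the smoothness statement in Lemma~\ref{lemma:lift-grH}(3) applied at the level of base spaces. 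A secondary point is that $\rho_{s,0} = \rho_0$ is only asserted on the \emph{smooth} locus $U_0$ of $X_0$ and via the extended construction for singular $V$ mentioned after Proposition~\ref{prop:embedding of submanifold}; I would use the continuous extension of $\rho_{s,0}$ to all of $X_0$ to match $G_2|_{X_0}$, noting that both extensions are forced by continuity from $U_0$, which is dense. With these identifications in place, the compactness argument above goes through without further calculation.
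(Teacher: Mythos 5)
Your proof is correct and establishes the lemma, but the route is genuinely different from the paper's. You define $G_1(x)=\rho_{s,\pi(x)}(x)$ and $G_2(x)=\phi_{\pi(x)}(x)$, observe that the continuous nonnegative function $x\mapsto d_\P(G_1(x),G_2(x))$ vanishes identically on $X_0$, and conclude by a compactness argument on the superlevel set $\{d_\P(G_1,G_2)\geq\epsilon\}\subset\X_{[0,1]}$. The paper instead argues in two steps using uniform continuity: first it shows $d_\P(\phi_t(x),x)<\delta$ for small $t$ (uniform continuity of the map $\Psi(x,t)=(\phi_t(x),\pi(x)-t)$ on a compact set $B$), then it uses the identity $f_s(\phi_t(x))=\phi_t(x)$ (a consequence of $\rho_{s,0}=\mathrm{id}$ from Proposition~\ref{prop:toric submanifolds don't move}) together with uniform continuity of $f_s(x)=\rho_{s,\pi(x)}(x)$ to convert $d_\P(\phi_t(x),x)<\delta$ into $d_\P(\rho_{s,t}(x),\phi_t(x))=d_\P(f_s(x),f_s(\phi_t(x)))<\epsilon$. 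Your approach is somewhat more direct and conceptually cleaner; the paper's is more explicitly quantitative. Both ultimately rest on the same two inputs: continuity of $x\mapsto\rho_{s,\pi(x)}(x)$ on $\X_{[0,1]}$ (Proposition~\ref{prop:rho-smooth-in-t}(1)) and \emph{joint} continuity of the gradient-Hamiltonian flow in the pair $(x,t)$, including down to $X_0$.

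One caution about the second input. You cite Proposition~\ref{prop-grad-Hamiltonian}(c) for the continuity of $G_2$, but that statement only asserts that, for each \emph{fixed} $t$, the map $\phi_t\colon X_t\to X_0$ extends continuously; it does not by itself give continuity of the joint map $x\mapsto\phi_{\pi(x)}(x)$ on $\X_{[0,1]}$. You correctly flag this as something to check, but the reference you propose (Lemma~\ref{lemma:lift-grH}(3)) concerns the inverse flow at the line-bundle level over $U_0\times[0,1]$ and does not directly supply the needed base-level joint continuity near the singular fiber. The paper closes this gap by invoking \cite[Part 1, Theorem 4.1]{HarKav} to get continuity of $\Psi$ on the compact set $B$, and you would need the same (or an equivalent) citation to make $G_2$ genuinely continuous on $\X_{[0,1]}$. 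Once that is in place, your superlevel-set compactness argument goes through as written.
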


\begin{proof}
First, we show that we can choose $t$ small enough that $\phi_t(x)$ 
is close to $x$, uniformly in $X_t$. Note that this part of the argument is
independent of the parameter $s$. 
Let 
\[ 
B := \Bigl\{ (x,t) \in \X \cross [0,1] \st \pi(x) \in [0,1],\  
t\leq \pi(x) \Bigr\} 
\subseteq \P \times [0,1] \times [0,1] \subseteq \P \times \C \times [0,1].
\]
Then $B$ is a closed subset of the compact space $\P \cross [0,1]
\times [0,1]$, and is therefore 
compact.  
Consider the map $\Psi: B \to B$ given by $\Psi(x,t) = (\phi_t(x),
\pi(x)-t)$. Note that $\Psi$ is well-defined since $t \leq \pi(x)$ by
assumption so $\pi(x)-t \geq 0$. 
It follows from 
\cite[Part 1, Theorem 4.1]{HarKav} that $\Psi$ is continuous as a
function from $B$ to itself, 
and hence uniformly continous. 
In particular, this implies that for any $\delta>0$, there exists a
$t_0>0$ such that for any $t<t_0$ we have 
$d_\P(\phi_t(x), x) <\delta$ for any $x \in X_t$.

We next analyze the embeddings 
$\rho_{s,t} \colon X_t \into \mathcal{P}$.  
Recall that $\X_{[0,1]}$ denotes $\pi\inv([0,1])\subset \X$. 
For a fixed $s$, let $f_s$ denote the map 
$\X_{[0,1]} \to \P$ given by $x \mapsto \rho_{s,\pi(x)}(x)$.
Then $f_s$ is continuous by Proposition~\ref{prop:rho-smooth-in-t},
and therefore also uniformly continuous 
since $\X_{[0,1]}$ is compact.  
Recall from Proposition~\ref{prop:toric submanifolds don't move} that
$\rho_{s,0}=\id$ for all $s$.  
Thus for any $x \in X_t$ we have 
\[ f_s(\phi_t(x)) = \rho_{s,0}(\phi_t(x)) = \phi_t(x) \]
since $\phi_t(x) \in X_0$. 
Now let $\epsilon>0$ be given.  
Choose $\delta>0$ such that $d_\P(x,y)<\delta$ implies 
$d_\P(f_s(x),f_s(y))<\epsilon$ for any $x$, $y \in \X_{[0,1]}$.  
Then choose $t_0$ so that $0<t<t_0$ implies $d_\P(\phi_t(x),x)<\delta$ 
for all $x \in X_t$, as above. 
Then for all $x\in X_t$ with $0<t<t_0$, 
\[
d_\P(\rho_{s,t}(x) , \phi_t(x)) = 
d_\P(f_s(x) , f_s(\phi_t(x))) < \epsilon
\]
as required.
\end{proof}

We are now ready to prove Proposition~\ref{prop:support on fibers}.
The idea of the proof is to combine two separate estimates, as we now
sketch.  On the one hand, we will use Lemma~\ref{lemma:rho-st-close}
to argue that we can make the integral over $X$ close to the analogous
integral over $X_0$. On the other hand, we know from
Lemma~\ref{lemma:support-section-converges-X0} that the integral over
$X_0$ can be made arbitrarily small. Putting these estimates together
gives the result. We make this precise in the proof below.

\begin{proof}[Proof of Proposition~\ref{prop:support on fibers}]

We first note that by Lemma~\ref{lemma:support-section-converges-X0} 
there exists $s_0>0$ such that for any $s>s_0$ we have 
\begin{equation}\label{eq:integral on X_0}
\int_{X_0 \setminus B_{\eta,0}} \bigg\lvert
\frac{\tilde{\chi}^*_s(\sigma^{\tilde{m}})}
{\|\tilde{\chi}^*_s(\sigma^{\tilde{m}})\|_{L^1(X_0)}}
\bigg\rvert \hsm d(vol) < 
\frac{\epsilon}{2}.
\end{equation}

Next we notice that since $\tilde{\chi}_s^*(\sigma^{\tilde{m}})$ is a
holomorphic section of a hermitian line bundle,  
its norm $\sabs{\tilde{\chi}_s^*(\sigma^{\tilde{m}})}$ is a
continuous function on $\P$.
Since $\rho_{s,t}$ is continuous in $t$, as noted in
Proposition~\ref{prop:rho-smooth-in-t}, the function
$t\mapsto \abs{\tilde{\chi}_s^*\sigma^{\tilde{m}}\circ \rho_{s,t}}_{L^1(X_t)}$
is a continous function on $t$.
Define $C_0$ and $C_1$, respectively, to be the minumum and maximum
values of
$\abs{\tilde{\chi}_s^* \sigma^{\tilde{m}} \circ \rho_{s,t}}_{L^1(X_t)}$
for $t\in [0,1]$, and note that $C_0\neq 0$ for sufficiently small $t$
because 
$\tilde{\chi}_s^*(\sigma^{\tilde{m}})$ is not identically
  zero \cite[Theorem 5.3 (a5)]{HamKon}, and thus
$\tilde{\chi}_s^* \sigma^{\tilde{m}} \circ \rho_{s,t}$
is not the zero section for sufficiently small $t$. 

Similarly, 
since $\tilde{\chi}_s^*(\sigma^{\tilde{m}})$ is a holomorphic section
of a hermitian line bundle on the compact set $\P$, 
its norm $\sabs{\tilde{\chi}_s^*(\sigma^{\tilde{m}})}$ is a continuous,
and hence uniformly continuous, function on $\P$.
Thus for any $\epsilon>0$, there exists a $\delta>0$ such that if
$d_\P(x,x') < \delta$ for $x, x' \in \P$, then
\[
\bigg\lvert \sabs{\tilde{\chi}_s^*(\sigma^{\tilde{m}})}(x)
- \sabs{\tilde{\chi}^*_s(\sigma^{\tilde{m}})} (x') \bigg\rvert <
\frac{\epsilon C_0 \|\tilde{\chi}_s^* \sigma^{\tilde{m}}\|_{L^1(X_0)}}
  {4 C_1 \vol(X_t)}.
\]
Moreover, by Lemma~\ref{lemma:rho-st-close}, for a fixed $s$ with
$s>s_0$ as above, we know 
there exists 
$t_0=t_0(s)>0$ such that for any $t$ with $0<t<t_0=t_0(s)$ and any $x \in X_t$ we
have 
\[
d_\P(\rho_{s,t}(x), \phi_t(x)) < \delta.
\]
For what follows we will also choose $t_0(s)$ sufficiently small so
that $\tilde{\chi}_s^*(\sigma^{\tilde{m}}) \circ \rho_{s,t}$ is not
identically zero for $0 < t < t_0$, so in particular $C_0 > 0$. 
Now, we have that for all $x \in X_t$ with $0 < t < t_0=t_0(s)$,
\begin{equation}\label{eq:sigma-m-s-close}
\bigg\lvert 
  \sAbs{\tilde{\chi}_s^*(\sigma^{\tilde{m}})}\bigl(\rho_{s,t}(x)\bigr)
  - \sAbs{\tilde{\chi}^*_s(\sigma^{\tilde{m}})}\bigl(\phi_{t}(x)\bigr) \bigg\rvert
< \frac{\epsilon C_0 \|\tilde{\chi}_s^* \sigma^{\tilde{m}}\|_{L^1(X_0)}}
  {4 C_1 \vol(X_t)}
\leq \frac{\epsilon \|\tilde{\chi}_s^* \sigma^{\tilde{m}}\|_{L^1(X_0)}}{4 \vol(X_t)}
\end{equation}
where the last inequality is because $C_0/C_1\leq 1$.

Next we recall that the sections $\sigma^m_{s,t}$ on $X$ in
Definition~\ref{definition:sigma s t} are given by a sequence of
pullbacks. 
In particular, since both $\tilde{\phi}_{1-t}$ and
$\tilde{\rho}_{s,t}$ preserve the 
hermitian metric and $\phi_{1-t}$ preserves
symplectic structures, 
we have that 
\begin{equation}\label{eq:pullback integrals}
\int_{X \setminus B_\eta} \sabs{\sigma^m_{s,t}} \hsm d(vol) = 
\int_{X_t\setminus B_{\eta,t}} 
\sAbs{\tilde{\rho}^*_{s,t} \tilde{\chi}^*_s(\sigma^{\tilde{m}})} \hsm d(vol) =
\int_{X_t \setminus B_{\eta,t}} \sAbs{\tilde{\chi}_s^*(\sigma^{\tilde{m}})}
   \circ \rho_{s,t} \hsm d(vol), 
\end{equation}
where we also use that $\phi_{1-t}^{-1}(B_{\eta, t}) = B_\eta$ by
construction~\eqref{eq:definition mu_t} of the moment maps $\mu_t$;
for the same reason, the $L^1$-norms satisfy
\begin{equation}\label{eq:sigma^m norm equality}
  \big\| \sigma^m_{s,t} \big\|_{L^1(X)} =
  \int_{X} \sabs{\sigma^m_{s,t}} \hsm d(vol) = 
  \int_{X_t} \sAbs{\tilde{\chi}_s^*(\sigma^{\tilde{m}})} \circ \rho_{s,t}
    \hsm d(vol) =
\big\|\tilde{\chi}_s^*(\sigma^{\tilde{m}}) \circ \rho_{s,t}\big\|_{L^1(X_t)}.
\end{equation}
Similarly, since $\phi_t$ and $\tilde{\phi}_t$ preserve the
relevant structures we have 
\begin{equation}\label{eq:pullbacks to X_0 and X_t}
\int_{X_0 \setminus B_{\eta,0}} 
\sAbs{\tilde{\chi}_s^*(\sigma^{\tilde{m}})} \hsm d(vol) =
\int_{X_t\setminus B_{\eta,t}}
\sAbs{\tilde{\phi}_t^*\tilde{\chi}_s^*(\sigma^{\tilde{m}})}\hsm d(vol) =
\int_{X_t\setminus B_{\eta,t}} \sAbs{\tilde{\chi}_s^*(\sigma^{\tilde{m}})}
  \circ \phi_t \hsm d(vol).
\end{equation}
In this case, the $L^1$-norms satisfy
\begin{equation}\label{eq:norms-phi-t}
  \big\|\tilde{\chi}_s^*(\sigma^{\tilde{m}}) \big\|_{L^1(X_0)} =
  \int_{X_0} \sAbs{\tilde{\chi}_s^*(\sigma^{\tilde{m}})} \hsm d(vol)
  = \int_{X_t} \sAbs{\tilde{\chi}_s^*(\sigma^{\tilde{m}})}
  \circ \phi_t \hsm d(vol)
  = \big \| \tilde{\chi}_s^*(\sigma^{\tilde{m}}) \circ \phi_t \big\|_{L^1(X_t)};
\end{equation}
however, because $\phi_t$ preserves the symplectic structure, 
the above norms are equal for all $t\in [0,1]$.  

Because of the normalizing factors in the denominators, we will need
to show that
$\abs{\tilde{\chi}_s^* \sigma^{\tilde{m}}\circ \rho_{s,t}}_{L^1(X_t)}$
is close to
$\abs{\tilde{\chi}_s^* \sigma^{\tilde{m}}\circ \phi_t}_{L^1(X_t)}$,
which we do as follows:
For $s>s_0$ and for all $x$ in $X_t$, $0<t\leq t_0(s)$,
\begin{equation}\label{eq:estimate-on-norms}
\begin{split}
\Big\lvert \abs{\tilde{\chi}_s^* \sigma^{\tilde{m}}\circ \rho_{s,t}} 
- \abs{\tilde{\chi}_s^* \sigma^{\tilde{m}}\circ \phi_t} 
\Big\rvert    
& \leq 
\int_{X_t}
\Big\lvert \sAbs{\tilde{\chi}_s^* \sigma^{\tilde{m}}\circ \rho_{s,t}}
- \sAbs{\tilde{\chi}_s^* \sigma^{\tilde{m}}\circ \phi_t} 
\Big\rvert    \hsm d(vol) \\
& \leq 
\int_{X_t} \frac{\epsilon C_0 \|\tilde{\chi}_s^* \sigma^{\tilde{m}}\|_{L^1(X_0)}}
    {4 C_1 \vol(X_t)} \hsm d(vol)
= \frac{\epsilon C_0 \|\tilde{\chi}_s^* \sigma^{\tilde{m}}\|_{L^1(X_0)}}{4 C_1}
\end{split}
\end{equation}
where the last inequality comes from~\eqref{eq:sigma-m-s-close}.

Now, using the fact that
\[
\aabs{\frac{a}{h} - \frac{b}{k}} = \aabs{\frac{a(k-h) + h(a-b)}{hk}}
\leq \frac{\sabs{a}\sabs{k-h}}{\sabs{hk}} + \frac{\sabs{a-b}}{\sabs{k}}
\] 
we obtain that, for our fixed value of $s>s_0$ and for all $x \in X_t$
with $0<t\leq t_0(s)$, the expression 
\begin{equation*}
\left\lvert
\frac{\sabs{\tilde{\chi}_s^*\sigma^{\tilde{m}}}\bigl(\rho_{s,t}(x)\bigr)}
  {\abs{\tilde{\chi}_s^* \sigma^{\tilde{m}}\circ \rho_{s,t}}}
- \frac{\sabs{\tilde{\chi}_s^* \sigma^{\tilde{m}}}\bigl(\phi_t(x)\bigr)}
  {{\abs{\tilde{\chi}_s^* \sigma^{\tilde{m}}\circ \phi_t}}}
\right\rvert 
\end{equation*} 
is less than or equal to  
\begin{equation}\label{eq:huge-ugly-mess-with-normalizations}
\begin{split}
\frac{\sAbs{\tilde{\chi}_s^* \sigma^{\tilde{m}}\bigl(\rho_{s,t}(x)\bigr)}}
  {\abs{\tilde{\chi}_s^* \sigma^{\tilde{m}}\circ \rho_{s,t}}
  \abs{\tilde{\chi}_s^* \sigma^{\tilde{m}}\circ \phi_t}} \;
\Big\lvert 
\abs{\tilde{\chi}_s^* \sigma^{\tilde{m}}\circ \rho_{s,t}} 
- \abs{\tilde{\chi}_s^* \sigma^{\tilde{m}}\circ \phi_t}
\Big\rvert 
+  \frac{\Bigl\lvert
  \sAbs{\tilde{\chi}_s^*(\sigma^{\tilde{m}})}\bigl(\rho_{s,t}(x)\bigr)
  - \sAbs{\tilde{\chi}^*_s(\sigma^{\tilde{m}})}\bigl(\phi_{t}(x)\bigr)
  \Bigr\rvert}
{{\abs{\tilde{\chi}_s^* \sigma^{\tilde{m}}\circ \phi_t}}}.
\end{split}
\end{equation}
Using \eqref{eq:estimate-on-norms} on the first term, 
the second estimate in~\eqref{eq:sigma-m-s-close} on the second term,
the fact that 
$\sAbs{\tilde{\chi}_s^*(\sigma^{\tilde{m}})\bigl(\rho_{s,t}(x)\bigr)} \leq
\frac{C_1}{\vol(X_t)}$
by the definition of $C_1$, the fact that $C_0$ is the minimum value
of $\abs{\tilde{\chi}_s^*\sigma^{\tilde{m}} \circ \rho_{s,t}}$
(and hence
$\frac{1}{\abs{\tilde{\chi}_s^*\sigma^{\tilde{m}} \circ \rho_{s,t}}} \leq \frac{1}{C_0}$), and the equality~\eqref{eq:norms-phi-t}, this becomes  
\begin{equation}\label{eq:normalized-estimate-rho-phi-sigma}
\begin{split}
\left\lvert
\frac{\sAbs{\tilde{\chi}_s^*\sigma^{\tilde{m}}}\bigl(\rho_{s,t}(x)\bigr)}
  {\abs{\tilde{\chi}_s^* \sigma^{\tilde{m}}\circ \rho_{s,t}}}
- \frac{\sAbs{\tilde{\chi}_s^* \sigma^{\tilde{m}}}\bigl(\phi_t(x)\bigr)}
  {{\abs{\tilde{\chi}_s^* \sigma^{\tilde{m}}\circ \phi_t}}}
\right\rvert 
& \leq 
\frac{C_1}{C_0 \vol(X_t)
  \abs{\tilde{\chi}_s^* \sigma^{\tilde{m}}\circ \phi_{t}}}
\frac{\epsilon C_0 \|\tilde{\chi}_s^* \sigma^{\tilde{m}}\|}{4 C_1}
+ 
\frac{1}{{\abs{\tilde{\chi}_s^* \sigma^{\tilde{m}}\circ \phi_t}}} 
\frac{\epsilon \|\tilde{\chi}_s^* \sigma^{\tilde{m}}\|}{4 \vol(X_t)} \\
& = \frac{\epsilon}{2\vol(X_t)}.
\end{split}
\end{equation}

Putting everything together, for $s>s_0$ and $t<t_0$ for
the chosen $s_0, t_0(s)$ as above, we have 
\begin{equation}\label{eq:final estimate for supports}
\begin{split}
\int_{X \setminus B_\eta}
\frac{\sabs{\sigma^m_{s,t}}}{\abs{\sigma^m_{s,t}}} \hsm d(vol) & = 
\int_{X \setminus B_\eta}
\frac{\sabs{\sigma^m_{s,t}}}{\abs{\sigma^m_{s,t}}} \hsm d(vol)  - 
\int_{X_0 \setminus B_{\eta,0}} 
\frac{\sabs{\tilde{\chi}^*_s(\sigma^{\tilde{m}})}}
     {\abs{\tilde{\chi}^*_s(\sigma^{\tilde{m}})}}
\hsm d(vol) + 
\int_{X_0 \setminus B_{\eta,0}} 
\frac{\sabs{\tilde{\chi}^*_s(\sigma^{\tilde{m}})}}
     {\abs{\tilde{\chi}^*_s(\sigma^{\tilde{m}})}}
\hsm d(vol) \\
 & = \int_{X_t \setminus B_{\eta,t}} 
\frac{\sabs{\tilde{\chi}^*_s(\sigma^{\tilde{m}})} \circ \rho_{s,t}}
{\Abs{\tilde{\chi}_s^*(\sigma^{\tilde{m}}) \circ \rho_{s,t}}}
\hsm  d(vol) - 
\int_{X_t \setminus B_{\eta,t}} 
\frac{ \sabs{\tilde{\chi}^*_s(\sigma^{\tilde{m}})} \circ \phi_t}
     {\Abs{\tilde{\chi}^*_s(\sigma^{\tilde{m}})\circ \phi_t}}
\hsm d(vol)
 + \int_{X_0 \setminus B_{\eta,0}} 
 \frac{\sabs{ \tilde{\chi}^*_s(\sigma^{\tilde{m}}) }}
     {\Abs{\tilde{\chi}^*_s(\sigma^{\tilde{m}})}}
 \hsm d(vol) \\
& \leq \bigg\lvert 
\int_{X_t \setminus B_{\eta,t}} 
\frac{\sabs{\tilde{\chi}^*_s(\sigma^{\tilde{m}})} \circ \rho_{s,t}}
{\Abs{\tilde{\chi}_s^*(\sigma^{\tilde{m}}) \circ \rho_{s,t}}}
\hsm  d(vol) - 
\int_{X_t \setminus B_{\eta,t}} 
\frac{\sabs{\tilde{\chi}^*_s(\sigma^{\tilde{m}})} \circ \phi_t}
     {\abs{\tilde{\chi}^*_s(\sigma^{\tilde{m}})\circ \phi_t}}
\hsm d(vol) \bigg \rvert + 
\int_{X_0 \setminus B_{\eta,0}} 
 \frac{\sabs{\tilde{\chi}^*_s(\sigma^{\tilde{m}})}}
     {\abs{\tilde{\chi}^*_s(\sigma^{\tilde{m}})}}
 \hsm d(vol) \\
& \leq 
\int_{X_t \setminus B_{\eta,t}} \bigg \lvert 
\frac{\sabs{\tilde{\chi}^*_s(\sigma^{\tilde{m}})} \circ \rho_{s,t}}
{\Abs{\tilde{\chi}_s^*(\sigma^{\tilde{m}}) \circ \rho_{s,t}}} - 
\frac{\sabs{\tilde{\chi}^*_s(\sigma^{\tilde{m}})} \circ \phi_t}
     {\abs{\tilde{\chi}^*_s(\sigma^{\tilde{m}})\circ \phi_t}}
\bigg\rvert  \hsm d(vol) + 
\int_{X_0 \setminus B_{\eta,0}} 
 \frac{\sabs{\tilde{\chi}^*_s(\sigma^{\tilde{m}})}}
     {\abs{\tilde{\chi}^*_s(\sigma^{\tilde{m}})}}
 \hsm d(vol) \\
& \leq 
\int_{X_t \setminus B_{\eta,t}} \frac{\epsilon}{2 \vol(X_t)} \hsm
d(vol) + 
\int_{X_0 \setminus B_{\eta,0}} 
 \frac{\sabs{\tilde{\chi}^*_s(\sigma^{\tilde{m}})}}
     {\abs{\tilde{\chi}^*_s(\sigma^{\tilde{m}})}} \hsm d(vol)\\
& \leq \frac{\epsilon}{2 \vol(X_t)} \cdot \vol(X_t) +
\frac{\epsilon}{2} = \epsilon
\end{split}
\end{equation}
as required, where the second equality uses~\eqref{eq:pullback
  integrals}, \eqref{eq:sigma^m norm equality},
\eqref{eq:pullbacks to X_0 and X_t}, and~\eqref{eq:norms-phi-t}, 
the second-to-last inequality uses~\eqref{eq:normalized-estimate-rho-phi-sigma}, and the last inequality
uses~\eqref{eq:integral on X_0}.

Finally, we wish to prove that there exists $t'=t'(s): [0, \infty) \to
[0,1]$ a continuous function of $s$ such that~\eqref{eq:final estimate
  for supports} holds for $\sigma^m_{s,t}$ for all $s$ and all $t$
with $0 \leq t \leq t'(s)$, and also such that $t'(s) \to 0$ as $s \to
\infty$. 
To see this, notice first that 
immediately before~\eqref{eq:sigma-m-s-close} we made a choice of 
$t_0(s)$ which depended on $s$. In the subsequent argument we proved 
statements that hold 
for all $t$ with $0<t<t_0$. In particular,  
these statements are still true if we replace $t_0$ by a smaller 
choice of $t_0$.  From this it follows that we may assume without loss
of generality that $t_0 = t_0(s)$ is a monotone non-increasing
function of $s$. 
 
A standard result from real analysis (see e.g. \cite[Lemma 1.6.31(iii)]{Tao})
says that a bounded, non-decreasing monotone function $F\colon \R \to \R$ 
can be written as $F=F_c + F_{pp}$, 
where $F_c$ is a continuous monotone non-decreasing function 
and $F_{pp}$ is a jump function.  
Looking at the definition of $F_{pp}$ in the proof in~\cite{Tao},
it is obvious that $F_{pp}$ is nonnegative; 
a little more thought shows that if $F(x)$ is nonnegative
then $F_{pp}(x) \leq F(x)$ for all $x$, 
so if $F(x)$ is nonnegative then so is $F_c$.  
Taking $F(s) = t_0(-s)$ for $s\leq 0$ (and constant for $s\geq 0$), 
we can apply this decomposition to obtain that $t_0(s)$ for $s>0$ 
is the sum of 
a nonnegative jump function and 
a continuous positive non-increasing function $t'(s)$ 
which therefore satisfies $0\leq t'(s) \leq t_0(s)$.  
By shrinking $t'(s)$ if necessary, we can arrange that $t'(s)\to 0$ 
as $s\to\infty$.  
Since the inequalities at each stage are true for the chosen $s$ 
and for all $t$ with $0<t\leq t_0(s)$, 
they are true for $0<t<t'(s)$, and we are finished.
\end{proof}


Next, the fiber $\mu^{-1}_0(m)$ is diffeomorphic to a torus and lies entirely
within the open dense torus orbit of $X_0$, and thus it is possible to
obtain much more refined information about the behavior of the family
$\{\sigma^m_{s,t}\}$ in the limit. Specifically, 
let $\Gamma(X, L^*)$ denote 
the space of \emph{smooth} (not necessarily holomorphic) sections
of the dual complex line bundle and let $\langle \cdot, \cdot \rangle$
denote the usual pairing 
between $L^*$ and $L$. For 
$\sigma \in \Gamma(X,L^*)$ we let $\| \sigma \|_{L^1(X)}$
denote its $L^1$-norm with respect to the Hermitian metric on $L$,
i.e. $\| \sigma \|_{L^1(X)} = \int_X \lvert \sigma \rvert \hsm
d(vol)$. 
We have the following. 

\begin{proposition}\label{proposition:interior} 
Let $m \in \Delta_0 \cap \Z^n$
be an interior lattice point. Let $\tau \in \Gamma(X_1, \L_1^*)$. Then 
  there exist a covariantly constant section $\delta_m$ of $(L
  \vert_{\mu^{-1}(m)}, \nabla \vert_{\mu^{-1}(m)})$, a measure
  $d\theta_m$ on $\mu^{-1}(m)$, and a continuous function $t=t(s)$ satisfying $\lim_{s \to
    \infty} t(s)=0$ such that 
\begin{equation}\label{eq:main}
\lim_{s \to \infty} \int_{X} \bigg\langle \tau, \frac{\sigma^m_{s,
    t(s)}}{\| \sigma^m_{s,t(s)}\|_{L^1(X)}} \bigg \rangle \hsm
d(vol) = 
\int_{\mu^{-1}(m)} \langle \tau, \delta_m \rangle \hsm d\theta_m. 
\end{equation}
\end{proposition}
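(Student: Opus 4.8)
The plan is to reduce the claim to the analogous — and already understood — statement on the special fibre $X_0$, where the toric methods of \cite{BFMN} (and \cite{HamKon}) apply because for $m$ interior the fibre $\mu_0^{-1}(m)$ lies in the smooth open dense torus orbit of $X_0$, and then to transport this statement to $X\cong X_1$ along the gradient-Hamiltonian flow. The sections $\sigma^m_{s,t}$ of Definition~\ref{definition:sigma s t} differ from the transported special-fibre objects only in that the first arrow of the diagram~\eqref{eq:big diagram for pulling back sections} passes through $\rho_{s,t}$ rather than through $\phi_t$; since $\rho_{s,0}=\id$ by Proposition~\ref{prop:toric submanifolds don't move}, the two agree at $t=0$, and the function $t=t(s)$ will be produced by a diagonal argument — choosing $t$ small relative to $s$ — of exactly the kind already used in the proof of Proposition~\ref{prop:support on fibers}.

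\emph{Steps 1 and 2 (special fibre and transport).} First I would record the weak-convergence statement on $X_0$ of which Lemma~\ref{lemma:support-section-converges-X0} is the support-localization part: for $\tilde m$ the fixed preimage of $m$, there exist a covariantly constant section $\delta^0_m$ of $L_0\vert_{\mu_0^{-1}(m)}$ and a measure $d\theta^0_m$ on $\mu_0^{-1}(m)$ with
\[
\lim_{s\to\infty}\int_{X_0}\Big\langle \tau_0, \frac{\tilde\chi_s^*(\sigma^{\tilde m})}{\|\tilde\chi_s^*(\sigma^{\tilde m})\|_{L^1(X_0)}}\Big\rangle\, d(vol) = \int_{\mu_0^{-1}(m)}\langle \tau_0,\delta^0_m\rangle\, d\theta^0_m
\]
for every smooth section $\tau_0$ of $L_0^*$; this is established by the same analysis as in \cite{HamKon} (cf. \cite[Proposition 6.6]{HamKon}), which is the toric case of \cite{BFMN}. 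Next, set $\sigma^{m,0}_s:=\tilde\phi_1^*\big(\tilde\chi_s^*(\sigma^{\tilde m})\vert_{X_0}\big)$, which is precisely the $t=0$ specialization $\sigma^m_{s,0}$ of the family $\{\sigma^m_{s,t}\}$ (using $\rho_{s,0}=\id$). Since $m$ is interior, $\mu^{-1}(m)\subseteq U$ (Theorem~\ref{theorem:HarKav}(3)) and $\phi_1=\phi_{s=t}$ restricts to a symplectomorphism of a neighbourhood of $\mu^{-1}(m)$ onto one of $\mu_0^{-1}(m)$ with $\mu=\mu_0\circ\phi_1$; its lift $\tilde\phi_1$ preserves $\nabla$ and $h$ (Lemma~\ref{lemma:lift-grH}), so $\delta_m:=\tilde\phi_1^*\delta^0_m$ is covariantly constant, and we put $d\theta_m:=\phi_1^*d\theta^0_m$. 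Because $\phi_1$ preserves symplectic volume off a measure-zero set and $\tilde\chi_s^*(\sigma^{\tilde m})\vert_{X_0}$ is not identically zero (so $\|\sigma^{m,0}_s\|_{L^1(X_1)}=\|\tilde\chi_s^*(\sigma^{\tilde m})\|_{L^1(X_0)}>0$), Step 1 pushes forward to the statement of the proposition with $\sigma^m_{s,t(s)}$ replaced by $\sigma^{m,0}_s$ — with no choice of $t$ yet needed.

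\emph{Step 3 (comparison and diagonalization).} It remains to choose a continuous $t=t(s)$ with $t(s)\to0$ for which
$\big\|\,\sigma^m_{s,t(s)}/\|\sigma^m_{s,t(s)}\|_{L^1(X_1)}-\sigma^{m,0}_s/\|\sigma^{m,0}_s\|_{L^1(X_1)}\,\big\|_{L^1(X_1)}\to 0$;
testing against $\tau$ (bounded, being smooth on the compact $X$) and combining with Step 2 then gives the result. Since $\tilde\phi_{1-t}$ is an isometry of line bundles over the symplectic-volume-preserving $\phi_{1-t}$, we have $\|\sigma^m_{s,t}\|_{L^1(X_1)}=\|\tilde\chi_s^*(\sigma^{\tilde m})\circ\rho_{s,t}\|_{L^1(X_t)}$ and the difference above reduces to comparing $\tilde\rho_{s,t}^*(\tilde\chi_s^*\sigma^{\tilde m})$ with $\tilde\phi_t^*(\tilde\chi_s^*\sigma^{\tilde m}\vert_{X_0})$ as sections of $L_t$ over $X_t$. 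For each fixed $s$, $\tilde\chi_s^*(\sigma^{\tilde m})$ is a fixed smooth section on the compact $\P$; from the $C^0$-estimate $d_\P(\rho_{s,t},\phi_t)\to0$ as $t\to0$ (Lemma~\ref{lemma:rho-st-close}), the smooth dependence on $t$ of the embeddings and of their lifts (Propositions~\ref{prop:rho-smooth-in-t},~\ref{prop:rho-tilde-smooth-in-t} and Lemma~\ref{lemma:lift-grH}(3)), and $\rho_{s,0}=\id$, one obtains $\sigma^m_{s,t}\to\sigma^m_{s,0}$ in $L^1(X_1)$ and $\|\sigma^m_{s,t}\|_{L^1(X_1)}\to\|\sigma^m_{s,0}\|_{L^1(X_1)}>0$ as $t\to0^+$. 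Hence for each $s$ there is $t_1(s)>0$ so that the displayed $L^1$-difference is $<1/(1+s)$ whenever $0<t\le t_1(s)$; arranging $t_1$ monotone non-increasing and applying the decomposition of a bounded monotone function from \cite[Lemma 1.6.31]{Tao}, exactly as at the end of the proof of Proposition~\ref{prop:support on fibers}, produces a continuous $t=t(s)\le\min\{t'(s),t_1(s)\}$ with $t(s)\to0$, which completes the argument.

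The main obstacle is Step 3: the comparison of $\sigma^m_{s,t}$ with its $t=0$ limit must be organized so that it survives the diagonal choice $t=t(s)$. The delicate point is that $\tilde\chi_s^*(\sigma^{\tilde m})$ becomes an ever more concentrated ``bell curve'' as $s\to\infty$, so one must fix $s$ before taking $t$ small and only then extract the continuous envelope $t(s)$; the normalizing factors $\|\cdot\|_{L^1}$, which may tend to $0$, are controlled via continuity in $t$ together with the non-vanishing of $\tilde\chi_s^*(\sigma^{\tilde m})$, precisely as in Proposition~\ref{prop:support on fibers}.
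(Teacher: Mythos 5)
Your overall strategy coincides with the paper's: establish the weak convergence on the special fibre $X_0$, transport $\delta_{m,0}$ and $d\theta_{m,0}$ to $X_1$ through $\phi_1$ and $\tilde\phi_1$, and then compare $\sigma^m_{s,t}$ with the $t=0$ object $\sigma^m_{s,0}=\tilde\phi_1^*\tilde\chi_s^*\sigma^{\tilde m}$ via a diagonal choice $t=t(s)$. Where you genuinely diverge from the paper is in Step~3. The paper organizes the comparison through the compact subsets $K_t$ constructed in Lemma~\ref{lemma:ut-kt}: it bounds $\int_{X\setminus K}$ using Proposition~\ref{prop:support on fibers}, and establishes a \emph{uniform} $C^0$ bound on $K_t$ via the compactness argument of Lemma~\ref{last-bloody-technical-argument-left}. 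You instead propose to show, for each fixed $s$, that $\sigma^m_{s,t}\to\sigma^m_{s,0}$ in $L^1(X_1)$ as $t\to 0^+$, which — combined with $\|\sigma^m_{s,0}\|_{L^1}>0$ and the algebraic identity for $\lvert a/h - b/k\rvert$ — gives $L^1$ convergence of the normalized sections; testing against a bounded $\tau$ then finishes. This is a valid and somewhat lighter route: the uniform boundedness of the integrand by $2\sup_\P\sabs{\tilde\chi_s^*\sigma^{\tilde m}}$ together with the pointwise limit $h_s(\phi_{1-t}(x))\to h_s(\phi_1(x))=0$ a.e.\ on $X_1$ (from the joint continuity supplied by Propositions~\ref{prop:rho-smooth-in-t}, \ref{prop:rho-tilde-smooth-in-t} and Lemma~\ref{lemma:lift-grH}(3), together with $\rho_{s,0}=\id$) gives the $L^1$ convergence by dominated convergence on the finite-volume $X_1$. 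In effect you replace the compact-set-plus-uniform-estimate machinery with a dominated-convergence argument, which makes Proposition~\ref{prop:support on fibers} (and its function $t'(s)$) unnecessary in this proof.

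Two small cautions. First, the citation of the $C^0$ estimate from Lemma~\ref{lemma:rho-st-close} does not by itself deliver what you want in Step~3: that lemma compares the maps $\rho_{s,t}$ and $\phi_t$ in $\P$, which only controls the \emph{norms} $\sabs{\tilde\chi_s^*\sigma^{\tilde m}\circ\rho_{s,t}}$ versus $\sabs{\tilde\chi_s^*\sigma^{\tilde m}\circ\phi_t}$; to compare the lifted sections $\tilde\rho_{s,t}^*\tilde\chi_s^*\sigma^{\tilde m}$ and $\tilde\phi_t^*\tilde\chi_s^*\sigma^{\tilde m}$ themselves (phases included) you need precisely the joint continuity of the lifts that you cite in the same sentence, and it is that continuity — not the $C^0$ estimate on base maps — that furnishes the pointwise limit. (The paper is careful about this distinction: Proposition~\ref{prop:support on fibers} uses the $C^0$ estimate because it only involves norms, while Lemma~\ref{last-bloody-technical-argument-left}, which compares sections, uses the compactness argument instead.) Second, you should say explicitly that the dominated-convergence argument is the mechanism; as written, ``one obtains $\sigma^m_{s,t}\to\sigma^m_{s,0}$ in $L^1(X_1)$'' glosses over the step that carries most of the weight.
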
 

The idea of the proof is similar to that of
Proposition~\ref{prop:support on fibers} above and requires a
number of steps.  Namely, we will relate the LHS
of~~\eqref{eq:main} to a limit of integrals on $X_{t(s)}$ and then
approximate the integral on $X_{t(s)}$ by one on $X_0$. We then use
the fact that the analogous statement to
Proposition~\ref{proposition:interior} is already known on $X_0$; this
is the content of the following lemma.

\begin{lemma}\label{lemma:estimate on X_0}
Let $U_0$ denote the open dense $\T_0$-orbit in $X_0$ and let $m \in
\Delta_0 \cap \Z^n$ be an interior lattice point. Let $\tilde{m}$ be
the fixed choice of preimage of $m$ under $\iota^*$ as in
Definition~\ref{definition:sigma s t}. Then there exists a
covariantly constant section $\delta_{m,0}$ of $(L_\P
\vert_{\mu_0^{-1}(m)}, \nabla \vert_{\mu_0^{-1}(m)})$ over
$\mu_0^{-1}(m)$ and a measure $d\theta_{m,0}$ on $\mu_0^{-1}(m)$ such
that, for any smooth section $\tau \in \Gamma(U_0,
  L^*_\P \vert_{V_0})$ 
of the dual line bundle, we have 
\begin{equation}
\lim_{s \to \infty} \int_{X_0} 
\bigg\langle \tau, \frac{\tilde{\chi}^*_s(\sigma^{\tilde{m}})}
  {\|\tilde{\chi}^*_s(\sigma^{\tilde{m}}) \|_{L^1(X_0)}} \bigg \rangle
\hsm d(vol) = 
\int_{\mu_0^{-1}(m)} \langle \tau, \delta_{m,0} \rangle \hsm d\theta_{m,0}.
\end{equation}
\end{lemma}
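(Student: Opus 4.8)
The statement is essentially a restatement, in the notation of the present paper, of the analysis carried out by Baier--Florentino--Mourao--Nunes \cite{BFMN} for toric varieties and reproved in the flag-variety setting by the first author and Konno \cite{HamKon}. The plan is therefore to reduce to (a slight sharpening of) results already established in those references. The toric variety $X_0 \subseteq \P$ is, by assumption (e), the closure of a single $\T_0$-orbit, so $U_0 \cong \T_0 \cong (\C^*)^n$ is an open dense torus orbit carrying the restriction of the prequantum data. First I would fix holomorphic coordinates $(z_1,\dots,z_n)$ on $U_0$ coming from the $\T_0$-action, write the standard section $\tilde\chi_s^*(\sigma^{\tilde m})$ restricted to $U_0$ explicitly in terms of these coordinates, and invoke the key property of the family $\{\chi_s\}$ from \cite{BFMN} (cf.\ Lemma~\ref{lemma:chi-tilde} and \cite[Theorem 5.3]{HamKon}): in the logarithmic coordinates $u = \log|z|^2$ the potential governing $\overline\partial_s$ is a Legendre-type deformation whose effect is that $|\tilde\chi_s^*(\sigma^{\tilde m})|^2$, as a function on $U_0$, concentrates around the level set $\mu_0 = m$ as $s\to\infty$, with Gaussian decay transverse to the fibre $\mu_0^{-1}(m)$ in the $u$-variables.

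Given this, the second step is to extract the limit measure and the covariantly constant section. On the fibre $\mu_0^{-1}(m)$, which is a Lagrangian torus contained in $U_0$, the line bundle $L_\P$ is flat; since $m$ is an integer lattice point, $\mu_0^{-1}(m)$ is Bohr-Sommerfeld (this is the content of Proposition~\ref{prop:w0-bohr-s} applied to $X_0$), so there exists a nonzero covariantly constant section $\delta_{m,0}$ of $(L_\P|_{\mu_0^{-1}(m)},\nabla|_{\mu_0^{-1}(m)})$, unique up to a constant of modulus one; I would normalize it to have unit pointwise norm. The measure $d\theta_{m,0}$ is then the Liouville/Haar measure on the torus fibre $\mu_0^{-1}(m)$, suitably normalized so that the total mass equals $1$, matching the $L^1$-normalization of $\tilde\chi_s^*(\sigma^{\tilde m})$. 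The third step is the actual limit computation: writing the integral over $X_0$ as an integral over $U_0$ (the complement has measure zero), changing to the $(u,\theta)$ coordinates where $u$ ranges over (an affine image of) $\Delta_0^\circ$ and $\theta$ over the torus, one sees that $\langle \tau, \tilde\chi_s^*(\sigma^{\tilde m})/\|\cdot\|_{L^1}\rangle\, d(vol)$ is, up to lower-order terms, a normalized Gaussian in $u$ centred at $m$ times the smooth function $\theta \mapsto \langle \tau, \delta_{m,0}\rangle$ integrated against $d\theta$; standard Laplace-method/approximate-identity arguments then give convergence to $\int_{\mu_0^{-1}(m)}\langle\tau,\delta_{m,0}\rangle\,d\theta_{m,0}$.

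The main obstacle I anticipate is not any single hard estimate but rather the bookkeeping needed to make the reduction to \cite{BFMN, HamKon} precise in the present generality: one must check that the section $\sigma^{\tilde m}$ chosen here (a $\T_\P$-weight section on $\P$ restricted to $X_0$) corresponds, under the inclusion $X_0 \hookrightarrow \P$ and the projection $\iota^*$, to the toric section on $X_0$ indexed by $m$ that is analyzed in those papers, and that the deformation $\tilde\chi_s$ restricts compatibly — this is where assumption (e) and the identity $\iota^*(\Delta_\P) = \Delta_0$ are used. There is also a minor technical point that $X_0$ may be singular, so the concentration and Gaussian-decay statements should be stated on the open dense smooth orbit $U_0$ and one must observe that the $L^1$-mass outside $U_0$ is zero and that the test section $\tau$ is only required to be defined on $U_0$ (equivalently, on $V_0$, the smooth locus), as in the statement. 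Once these identifications are in place the convergence is the same Laplace-asymptotics argument as in \cite[Proposition 6.6]{HamKon}, and I would cite that result for the conclusion rather than reproduce its proof.
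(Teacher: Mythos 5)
Your approach matches the paper's: the lemma is proved by reducing to \cite[Proposition 6.6(4)]{HamKon}, identifying $X_0$ with the $V_{\mathrm{symp}}$ and $V_{\mathrm{comp}}$ of that reference and $\T_0$, $\T_\P$ with their $T^{\ell}_{\C}$, $T^{n}_{\C}$, exactly as you propose. One small correction: the second hypothesis the paper invokes to make that reduction legitimate is surjectivity of $\iota^*$ on the \emph{lattices}, i.e.\ assumption (g), not the polytope-level identity $\iota^*(\Delta_\P)=\Delta_0$ that you cite — the latter already follows from assumption (e) and, as the remark following~\eqref{eq:def W_0} points out, does not by itself guarantee the lattice surjectivity needed for the correspondence between weight sections on $\P$ and toric sections on $X_0$.
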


\begin{proof}
This is essentially the content of an argument given in \cite{HamKon}. 
Specifically, the $V_{symp}$ and $V_{comp}$ in the proof of
\cite[Proposition 6.6(4)]{HamKon} can be identified with our $X_0$.
Similarly, their $T^\ell_\C$ (respectively $T^n_\C$) is our $\T_0$
(respectively $\T_\P$). 
Finally, to apply the argument in \cite{HamKon} it is necessary that $X_0$ is
the closure of the $\T_0$-orbit of $[1:1:\cdots:1]$ and that $\iota^*$
is surjective on lattices, which hold by our
assumptions (e) and (g), respectively, as stated in
Section~\ref{sec-main-result}. Thus, the argument of
\cite{HamKon} applies. 
\end{proof}

In order to translate the previous lemma to a statement concerning
other fibers, we need some additional information. The next lemma
recalls some results from \cite{HarKav} and also constructs
compact subsets $K_t$ which will be useful for obtaining estimates. 
Let $\Delta^{\circ}_0$ denote the interior of the moment polytope. 

\begin{lemma}\label{lemma:ut-kt}
Let $t \in [0,1]$. Then there exists an open subset $U_t \subseteq
X_t$ and a compact subset $K_t \subseteq U_t$ such that: 
\begin{enumerate}
\item\label{l:u0-smooth} 
For $t=0$, $U_0$ equals $\mu_0^{-1}(\Delta^{\circ}_0) \subseteq X_0$,
the open dense $\T_0$-orbit in $X_0$. In particular, $U_0$ lies in the
smooth locus of $X_0$.

\item\label{l:gr-H-ut} 
The gradient-Hamiltonian flow $\phi_s\colon U_t \to U_{t-s}$ 
is a diffeomorphism for all $0\leq s\leq t$.  
In particular, $\phi_t$ is a diffeomorphism from $U_t$ to $U_0$.

\item\label{l:gr-H-kt} 

  The flow $\phi_s: U_t \to U_{t-s}$ in (2) identifies $K_t$ with
$K_{t-s}$, i.e., $\phi_s(K_t) = K_{t-s}$ for all $0\leq s \leq t$.

\item\label{l:k-contains-bs} 
The subset $K_t$ contains a neighborhood of every interior
Bohr-Sommerfeld fiber. More precisely, 
there exists some $\eta>0$ such that, for any $m \in \Delta^{\circ}_0\cap \Z^n$,
the neighborhood $B_{\eta,t}(m)$ 
as in~~\eqref{eq:definition B_eta} is contained in $K_t$.
\end{enumerate}
\end{lemma}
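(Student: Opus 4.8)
The plan is to define, for each $t \in [0,1]$, the open set $U_t := \mu_t^{-1}(\Delta_0^\circ) \subseteq X_t$, where $\mu_t$ is the moment map from~\eqref{eq:definition mu_t} and $\mu_0$ is the toric moment map on $X_0$, and then to take $K_t$ to be the $\mu_t$-preimage of a single fixed compact neighborhood of the interior lattice points. With this definition, part~\eqref{l:u0-smooth} is immediate: for $t=0$ the set $U_0 = \mu_0^{-1}(\Delta_0^\circ)$ is exactly the open dense $\T_0$-orbit of the toric variety $X_0$, which lies in the smooth locus of $X_0$.

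For part~\eqref{l:gr-H-ut} I would first record the compatibility $\mu_{t-s}\circ\phi_s = \mu_t$, which follows from $\mu_t = \mu_0\circ\phi_t$ (using the continuous extension of $\phi_t$ from Proposition~\ref{prop-grad-Hamiltonian}(c)) together with the flow identity $\phi_{t-s}\circ\phi_s = \phi_t$; this gives $\phi_s^{-1}(U_{t-s}) = U_t$. For $0 < s < t$ the map $\phi_s\colon X_t \to X_{t-s}$ is already a diffeomorphism of the full fibers by Proposition~\ref{prop-grad-Hamiltonian}(a), so it restricts to a diffeomorphism $U_t \to U_{t-s}$; the case $s=0$ is trivial. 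For the remaining case $s=t$, I would argue that the continuous extension $\phi_t$ restricts to a diffeomorphism from $U_t = \mu_t^{-1}(\Delta_0^\circ)$ onto $U_0 = \mu_0^{-1}(\Delta_0^\circ)$: injectivity and the local-diffeomorphism property come from $\phi_t$ being a symplectomorphism on a dense open subset of $X_t$ containing $\mu_t^{-1}(\Delta_0^\circ)$ (Theorem~\ref{theorem:HarKav}(1) and (3), stated there for $t=1$ but whose proof in \cite{HarKav} applies to each fiber $X_t$), and surjectivity onto $U_0$ follows from surjectivity of $\phi_t\colon X_t\to X_0$ together with the identity $\phi_t^{-1}(U_0) = U_t$.

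For parts~\eqref{l:gr-H-kt} and~\eqref{l:k-contains-bs}, since $\Delta_0$ is a bounded polytope the set $\Delta_0^\circ \cap \Z^n$ is finite, so I can fix $\eta > 0$ small enough that $\overline{B_\eta(m)} \subseteq \Delta_0^\circ$ for every $m \in \Delta_0^\circ\cap\Z^n$. Setting $P_\eta := \bigcup_{m \in \Delta_0^\circ\cap\Z^n}\overline{B_\eta(m)}$, a compact subset of $\Delta_0^\circ$, I define $K_t := \mu_t^{-1}(P_\eta)$. Then $K_t$ is compact since $X_t$ is compact and $\mu_t$ continuous; $K_t \subseteq U_t$ since $P_\eta \subseteq \Delta_0^\circ$; and $B_{\eta,t}(m) = \mu_t^{-1}(B_\eta(m)) \subseteq K_t$ for every interior lattice point $m$, which is~\eqref{l:k-contains-bs}. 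Finally, the compatibility $\mu_{t-s}\circ\phi_s = \mu_t$ together with the bijectivity of $\phi_s\colon U_t \to U_{t-s}$ from part~\eqref{l:gr-H-ut} shows that for $x \in U_t$ one has $x \in K_t$ if and only if $\phi_s(x) \in K_{t-s}$, whence $\phi_s(K_t) = K_{t-s}$, which is~\eqref{l:gr-H-kt}.

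The only step that is not essentially formal is the case $s=t$ of part~\eqref{l:gr-H-ut}: Proposition~\ref{prop-grad-Hamiltonian}(c) by itself only produces a symplectomorphism between \emph{some} dense open subsets of $X_t$ and $X_0$, so the real content is that this dense open subset can be taken to be precisely $\mu_t^{-1}(\Delta_0^\circ)$, with image the dense orbit $U_0$. This identification is exactly what the refined statements of Theorem~\ref{theorem:HarKav}(3) provide, so once those are unwound no further difficulty remains, and the construction and verification of the $K_t$ is then routine point-set topology.
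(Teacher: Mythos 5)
Your proposal is correct and takes essentially the same approach as the paper: $U_t := \mu_t^{-1}(\Delta_0^\circ)$ equals the paper's $\phi_t^{-1}(U_0)$ by the defining identity $\mu_t = \mu_0\circ\phi_t$, and your $K_t := \mu_t^{-1}(P_\eta)$ plays the same role as the paper's $K_t := \mu_t^{-1}(C)$ (the paper requires $C$ to be connected, but this is never used, so your disconnected union of closed balls works just as well). The one place where the routes differ is part~(2) for $s=t$: the paper cites \cite[Corollary 3.3]{HarKav} to conclude that the gradient-Hamiltonian vector field is defined on all of $U_0$, so that $\phi_{-t}$ is a genuine backward flow and $\phi_t\colon U_t \to U_0$ is automatically a diffeomorphism by the group property of a flow; you instead assemble the diffeomorphism by hand from surjectivity of $\phi_t$, the identity $\phi_t^{-1}(U_0)=U_t$, and the symplectomorphism statement of Theorem~\ref{theorem:HarKav}(1),(3) extended from $t=1$ to general $t$. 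Your route works and requires the same external input from \cite{HarKav}, but the paper's flow-based argument is somewhat cleaner in that it does not need to treat injectivity, surjectivity, and smoothness of the inverse as separate points.
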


\begin{proof}

In order to satisfy (1), we first define $U_0 :=
\mu_0^{-1}(\Delta^{\circ}_0)$. From \cite[Corollary 3.3]{HarKav} it
follows that $U_0$ is contained in the locus of points in $\X$ where
the gradient-Hamiltonian vector field is defined. Moreover, as in the
proof of \cite[Theorem 5.2]{HarKav}, we know the gradient-Hamiltonian
flow is well-defined on all of $X_t$ for any $t \neq 0$. By
\cite[Lemma 2.5]{HarKav} we may now define $U_t :=
\phi_{-t}(U_0)=\phi_t^{-1}(U_0)$ from which it is immediate that $\phi_s: U_t \to U_{t-s}$ is a
diffeomorphism from $U_t$ to $U_{t-s}$ (with inverse $\phi_{-s}$) for
any $0 \leq s \leq t$. This proves (2). 

It remains to define the compact subsets $K_t$ and to prove the claims
(3) and (4). 
Let $C \subset
\Delta^{\circ}_0$ be a connected closed subset containing within its
interior every interior
lattice point, i.e., if $m \in \Delta_0^\circ \cap \Z^n$ then $m \in
C^\circ$; such a $C$ clearly exists. We define $K_t := \mu_t^{-1}(C)$. Then $K_t$ is closed since
$\mu_t$ is continuous. Since $C \subseteq \Delta_0^\circ$ and we saw
$\mu_0^{-1}(\Delta_0^\circ) \subseteq U_0$ above, it also follows from
the definition~\eqref{eq:definition mu_t} of the $\mu_t$ that $K_t \subseteq
U_t$. Moreover, since $\mu_{t-s} \circ \phi_s = \mu_t$ by
construction of the integrable systems~\eqref{eq:definition mu_t}, and
the $\phi_t$ are diffeomorphisms on the $U_t$, it follows that
$\phi_s(K_t)=K_{t-s}$ for all $0 \leq s \leq t$. This proves
(3). Finally, since $C$ contains only a finite number of lattice
points, there exists
some $\eta>0$ 
 such that for all interior lattice points $m$, 
 the ball $B_{\eta}(m)$ is contained in $C$. 
This proves (4) and completes the proof. 
\end{proof}

Roughly, the idea in what follows is to replace the integrals in previous proofs by integrals
over $K_t$. By using Proposition~\ref{prop:support on fibers} we will
be able to control the error terms. 
Then, since $K_t$ is compact by assumption, we will be able to 
use a uniform continuity argument.  
We begin with an estimate which is uniform on $K_t$ for all
sufficiently small $t$.
This will be a key 
component of the proof of Proposition~\ref{proposition:interior}. 

\begin{lemma}\label{last-bloody-technical-argument-left}

Let $\tilde{m}$ be a preimage of an interior lattice point $m \in
\Delta_0 \cap \Z^n$. There exists a continuous function $t''=t''(s):
\R_{>0} \to [0,1]$ such that for any $\varepsilon > 0$ and any $s \in
\R_{>0}$, the following holds:  if $t \in [0,1]$ satisfies $0 \leq t
\leq t''(s)$ then 
\begin{equation}\label{eq:estimate on Kt} 
\biggl\lvert
\frac{\tilde{\rho}_{s,t}^* \tilde{\chi}_s^*(\sigma^{\tilde{m}})(x)}
{\|\tilde{\rho}_{s,t}^* \tilde{\chi}_s^*(\sigma^{\tilde{m}})\|_{L^1(X_t)}}
- \frac{\tilde{\phi}_t^* \tilde{\chi}_s^*(\sigma^{\tilde{m}})(x)}
{\|\tilde{\phi}_t^* \tilde{\chi}_s^*(\sigma^{\tilde{m}})\|_{L^1(X_t)}}
\biggr\rvert < \varepsilon.
\end{equation}
for all $x \in K_t$. 
\end{lemma}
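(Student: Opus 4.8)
The plan is to transport the comparison from the moving compact set $K_t\subseteq X_t$ to the fixed compact set $K_0\subseteq U_0\subseteq X_0$ by means of the gradient--Hamiltonian flow. Fix $s>0$. By Lemma~\ref{lemma:ut-kt} the flow $\phi_t$ restricts, for $t\in[0,1]$, to a symplectomorphism $U_t\to U_0$ carrying $K_t$ onto $K_0$, and by Lemma~\ref{lemma:lift-grH} it lifts to a fibrewise--isometric bundle isomorphism $\tilde\phi_t\colon L_t\vert_{U_t}\to L_0\vert_{U_0}$; recall also $L_0\vert_{U_0}=L_\P\vert_{U_0}$ via $U_0\subseteq X_0\subseteq\P$. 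Under $\tilde\phi_t$ the section $\tilde\phi_t^*\tilde\chi_s^*(\sigma^{\tilde m})$ is pushed forward to $\tilde\chi_s^*(\sigma^{\tilde m})\vert_{U_0}$, while $\tilde\rho_{s,t}^*\tilde\chi_s^*(\sigma^{\tilde m})$ is pushed forward to $G_{s,t}^*\tilde\chi_s^*(\sigma^{\tilde m})$, where $G_{s,t}:=\tilde\rho_{s,t}\circ\tilde\phi_t^{-1}$ is an isometric bundle map $L_0\vert_{U_0}\to L_\P$ covering $g_{s,t}:=\rho_{s,t}\circ\phi_t^{-1}\colon U_0\to\P$. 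Since $\tilde\phi_t$ is linear and isometric on fibres and $\phi_t$ preserves symplectic volume (and $X_0\setminus U_0$, $X_t\setminus U_t$ have measure zero), the left--hand side of~\eqref{eq:estimate on Kt} at $x\in K_t$ equals
\[
\Bigl\lvert\,\frac{G_{s,t}^*\tilde\chi_s^*(\sigma^{\tilde m})(y)}{h_t}-\frac{\tilde\chi_s^*(\sigma^{\tilde m})(y)}{k_t}\,\Bigr\rvert\qquad\text{at }y=\phi_t(x)\in K_0,
\]
where $h_t:=\bigl\|\tilde\rho_{s,t}^*\tilde\chi_s^*(\sigma^{\tilde m})\bigr\|_{L^1(X_t)}=\bigl\|\tilde\chi_s^*(\sigma^{\tilde m})\circ\rho_{s,t}\bigr\|_{L^1(X_t)}$ and $k_t:=\bigl\|\tilde\phi_t^*\tilde\chi_s^*(\sigma^{\tilde m})\bigr\|_{L^1(X_t)}=\bigl\|\tilde\chi_s^*(\sigma^{\tilde m})\bigr\|_{L^1(X_0)}$, by~\eqref{eq:sigma^m norm equality} and~\eqref{eq:norms-phi-t}. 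So it suffices to bound this quantity uniformly for $y\in K_0$ once $t$ is small enough (depending on $s$).

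Next I would show that, as $t\to0$ with $s$ fixed, $g_{s,t}$ converges to the inclusion $U_0\hookrightarrow\P$ and $G_{s,t}$ to the corresponding inclusion of total spaces, uniformly on the compact $K_0$. At $t=0$ these are equalities: $\phi_0=\mathrm{id}$ and, by Proposition~\ref{prop:toric submanifolds don't move}, $\rho_{s,0}=\rho_0$ is the original inclusion of $X_0$ (with the identity lift). The maps $(y,t)\mapsto g_{s,t}(y)$ and $(y,t)\mapsto G_{s,t}\bigl(\tilde\chi_s^*(\sigma^{\tilde m})(y)\bigr)$ are continuous on $K_0\times[0,1]$ --- for the first by Proposition~\ref{prop:rho-smooth-in-t} together with continuity of the flow $\phi_t^{-1}$, for the second by Proposition~\ref{prop:rho-tilde-smooth-in-t} and Lemma~\ref{lemma:lift-grH}(3). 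As $\tilde\chi_s^*(\sigma^{\tilde m})$ is a fixed holomorphic (hence continuous) section of $L_\P$ over the compact $\P$, applying the isometry $G_{s,t}$ one rewrites, for $y\in K_0$,
\[
\sAbs{G_{s,t}^*\tilde\chi_s^*(\sigma^{\tilde m})(y)-\tilde\chi_s^*(\sigma^{\tilde m})(y)}=\sAbs{\tilde\chi_s^*(\sigma^{\tilde m})(g_{s,t}(y))-G_{s,t}\bigl(\tilde\chi_s^*(\sigma^{\tilde m})(y)\bigr)},
\]
a difference of two elements of the single fibre $(L_\P)_{g_{s,t}(y)}$, both continuous in $(y,t)$ and agreeing at $t=0$; uniform continuity on the compact $K_0\times[0,1]$ then forces this to tend to $0$ uniformly in $y\in K_0$ as $t\to0$. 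At the same time $h_t\to k_t$ as $t\to0$, since $t\mapsto h_t$ is continuous (as observed in the proof of Proposition~\ref{prop:support on fibers}) with $h_0=\bigl\|\tilde\chi_s^*(\sigma^{\tilde m})\bigr\|_{L^1(X_0)}=k_t$, and this common value is positive because $\tilde\chi_s^*(\sigma^{\tilde m})$ is not identically zero by~\cite[Theorem 5.3(a5)]{HamKon} (it is the constant $C_0$ for small $t$ in that earlier proof). Feeding these into the elementary estimate $\aabs{a/h-b/k}\le\sabs{a}\sabs{k-h}/\sabs{hk}+\sabs{a-b}/\sabs{k}$ used in the proof of Proposition~\ref{prop:support on fibers}, with $a=G_{s,t}^*\tilde\chi_s^*(\sigma^{\tilde m})(y)$, $b=\tilde\chi_s^*(\sigma^{\tilde m})(y)$, $h=h_t$, $k=k_t$ --- and using that $\sabs{a}=\sabs{\tilde\chi_s^*(\sigma^{\tilde m})}(g_{s,t}(y))$ is bounded uniformly on $K_0\times[0,1]$ --- shows $\sup_{y\in K_0}\aabs{a/h_t-b/k_t}\to0$ as $t\to0$; equivalently, the left--hand side of~\eqref{eq:estimate on Kt} tends to $0$ uniformly over $x\in K_t$ as $t\to0$, for each fixed $s$.

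Finally, to produce the threshold function, I would argue as at the end of the proof of Proposition~\ref{prop:support on fibers}: the above yields, for each $s>0$, some $\theta(s)>0$ (which we may shrink so that $h_t>0$ on $[0,\theta(s)]$) such that~\eqref{eq:estimate on Kt} holds for all $0\le t\le\theta(s)$ and $x\in K_t$; since any smaller value still works, $s\mapsto\theta(s)$ may be taken monotone non-increasing, and then the decomposition of a bounded monotone function into a continuous monotone part plus a jump function (\cite[Lemma 1.6.31(iii)]{Tao}) produces a continuous positive non-increasing $t''(s)\le\theta(s)$, which we shrink further if necessary so that $t''(s)\to0$ as $s\to\infty$; at $t=0$ the inequality is trivial, both sections there being $\tilde\chi_s^*(\sigma^{\tilde m})\vert_{X_0}$. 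The step I expect to be most delicate is the interplay with the non-compactness of $U_0$: the two sections being compared are only defined over $U_0\subsetneq X_0$, so uniform convergence can be claimed only on the compact $K_0$ --- which is exactly why Lemma~\ref{lemma:ut-kt} arranges $K_0$ to contain a neighbourhood of every interior Bohr--Sommerfeld fibre --- whereas the normalizing $L^1$-norms are integrals over all of $X_0$. The resolution, and the point to get right, is that one never needs $L^1$-convergence of the difference $G_{s,t}^*\tilde\chi_s^*(\sigma^{\tilde m})-\tilde\chi_s^*(\sigma^{\tilde m})$ over $X_0$: it suffices to combine convergence of the individual norms $h_t\to k_t$ (from continuity in $t$) with uniform pointwise convergence on $K_0$, and these are precisely the two inputs the elementary estimate consumes.
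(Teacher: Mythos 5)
Your proof is correct and follows essentially the same strategy as the paper's: reduce to a continuity-plus-compactness argument based on the observation that the relevant quantity vanishes identically at $t=0$ (since $\phi_0=\mathrm{id}$ and $\rho_{s,0}=\rho_0$), using the smoothness/continuity inputs from Propositions~\ref{prop:rho-smooth-in-t}, \ref{prop:rho-tilde-smooth-in-t} and Lemma~\ref{lemma:lift-grH}(3), and then upgrade the resulting threshold to a continuous $t''(s)$ via the same monotone-decomposition trick. The only cosmetic difference is that you transport everything to the fixed compact $K_0\times[0,1]$ and split via the fraction estimate $\aabs{a/h-b/k}\le \sabs{a}\sabs{k-h}/\sabs{hk}+\sabs{a-b}/\sabs{k}$, whereas the paper treats the normalized difference as a single continuous section $h_s$ on the compact family $\K_{[0,1]}$ and applies an open-cover argument directly.
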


\begin{proof}

The idea of the proof is to show that the LHS of~\eqref{eq:estimate on
  Kt} is (the norm of) a continuous section of a line bundle over a suitable family that
is equal to zero for all $x \in K_0$ (i.e. when $t=0$) for any value
of $s$; then we use uniform continuity.

Recall that $\X_{[0,1]}$ denotes the restriction of our family $\X$ to
the subset $[0,1] \subseteq \C$, i.e. $\X_{[0,1]} := \pi^{-1}([0,1])
\subset \X$, and
$\mathcal{U}_{[0,1]} = \{ x \in \X_{[0,1]} \st x \in U_{\pi(x)}\}$
is the family of open dense subsets $U_t$ in each fibre,
as in~\eqref{eq:X-and-U-fam}.
We also define 
\begin{align*}
\K_{[0,1]} := \{ x \in \X_{[0,1]} \st x \in K_{\pi(x)}\}
\end{align*}
to be the family of the compact sets $K_t$
from Lemma~\ref{lemma:ut-kt} over ${[0,1]}$.
Let $L_\K$ (respectively $L_{\mathcal{U}}$) denote $L_\X \vert_{\K_{[0,1]}}$
(respectively $L_\X \vert_{\mathcal{U}_{[0,1]}}$).

Now fix $s \in \R$ with $s>0$ and let $x \in U_t \subset X_t$ for any
$t$. Recall that $\phi_t$ is the gradient-Hamiltonian flow, so in
particular $\phi_t: X_t \to X_0$ takes $X_t$ to $X_0$ and $U_t$ to
$U_0$ (cf. Lemma~\ref{lemma:ut-kt}). In the LHS of~\eqref{eq:estimate
  on Kt}, the expression $\tilde{\phi}_t^*
\tilde{\chi}_s^*(\sigma^{\tilde{m}})(x)$ is by definition equal to
$\big( \tilde{\phi}_t^{-1} \circ \tilde{\chi}_s^*(\sigma^{\tilde{m}})
\vert_{X_0} \circ \phi_t \big)(x)$. In particular,
$\tilde{\phi}_t^*\tilde{\chi}_s^*(\sigma^{\tilde{m}})$ sends each
$U_t$ to $L_t$ by definition, so it is a section of the bundle $L_t \to U_t$.
Putting these together for all $U_t$ for $t \in [0,1]$, we
obtain a section of $L_{\mathcal{U}} \to \mathcal{U}_{[0,1]}$.
The $L^1$-norm in the denominator is independent of $t$, 
since it can be written as 
\[ 
\|\tilde{\phi}_t^* \tilde{\chi}_s^*(\sigma^{\tilde{m}})\|_{L^1(X_t)}
= \int_{X_t} \sAbs{ \tilde{\phi}_t^* \tilde{\chi}_s^*(\sigma^{\tilde{m}})} \hsm d(vol)
= \int_{X_0} \sAbs{\tilde{\chi}_s^*(\sigma^{\tilde{m}})} \hsm d(vol)
\]
since $\phi_t$ preserves the Hermitian and symplectic structures,
and so the denominator of the second term is constant on $\mathcal{U}_{[0,1]}$.

Recall that $\sigma^{\tilde{m}}$ is holomorphic (hence smooth),
$\chi_s$ is smooth, and the map $\phi: \mathcal{U}_{[0,1]} \to U_0$
given by $x \mapsto \phi_{\pi(x)}(x)$ is smooth (since it is the flow
of a smooth vector field). Together with the last statement in
Lemma~\ref{lemma:lift-grH} we may conclude that the section of
$L_{\mathcal{U}} \to \mathcal{U}_{[0,1]}$ obtained above (sending $x
\in U_t$ to $\tilde{\phi}_{\pi(x)}^*
\tilde{\chi}_s^*(\sigma^{\tilde{m}})(x)$) is smooth. Finally,
recalling that when $t=0$ the maps $\phi_0: U_0 \to U_0$ and
$\tilde{\phi}_0: L_0 \vert_{U_0} \to L_0 \vert_{U_0}$ are both equal
to the identity, we conclude that the above section restricts on $U_0$
to be equal to
$\frac{\tilde{\chi}_s^*(\sigma^{\tilde{m}})}
{\abs{\tilde{\chi}_s^*(\sigma^{\tilde{m}})}}\Big\vert_{U_0}$.

Next, we consider the expression $\tilde{\rho}_{s,t}^*
\tilde{\chi}_s^*(\sigma^{\tilde{m}})$ contained in the LHS
of~\eqref{eq:estimate on Kt}. Recall that $\rho_{s,t}$ is the
embedding $X_t \to \P$ given in~\eqref{def:rho s t} and specified in
Proposition~\ref{prop:embedding of submanifold}, and $\tilde{\rho}_{s,t}$ is the lift of
$\rho_{s,t}$ to the line bundles (cf. Lemma~\ref{lemma:lift-rhos}). In particular,
for $x \in U_t$ the expression 
$\tilde{\rho}_{s,t}^*
\tilde{\chi}_s^*(\sigma^{\tilde{m}})(x)$ is by definition equal to 
$\big( \tilde{\rho}_{s,t}^{-1} \circ
\tilde{\chi}_s(\sigma^{\tilde{m}}) \vert_{\rho_{s,t}(U_t)} \circ
\rho_{s,t} \big)(x)$. As in the above case, by construction
$\tilde{\rho}_{s,t}^* \circ \tilde{\chi}_s^*(\sigma^{\tilde{m}})$
sends each $U_t$ to $L_t$ by definition, so it is a section of $L_t
\to U_t$ and by putting these together we obtain a section of
$L_{\mathcal{U}} \to \mathcal{U}_{[0,1]}$. 
Similarly to the above case, the last statement of
Proposition~\ref{prop:rho-tilde-smooth-in-t} allows us to conclude
that this section is continuous.
In this case the $L^1$-norm in the denominator is no longer independent
of $t$, but it will be continuous in $t$, and never zero since the
section is not the zero section.  
Moreover, by our assumption on
$\{\chi_s\}$ and Proposition~\ref{prop:toric submanifolds don't move}
as well as Proposition~\ref{prop:rho-tilde-smooth-in-t} we also know
that $\rho_{s,0}=\id$ for all $s$ and its lift $\tilde{\rho}_{s,0}$ 
acts as the identity on $L_0
\vert_{U_0}$, so 
$\tilde{\rho}_{s,0}^*\tilde{\chi}_s^*(\sigma^{\tilde{m}}) = \tilde{\chi}_s^*(\sigma^{\tilde{m}})$.

Now for $x\in\mathcal{K}_{[0,1]}$, let 
\[ h_s(x) =
\frac{\tilde{\rho}_{s,t}^* \tilde{\chi}_s^*(\sigma^{\tilde{m}})(x)}
{\abs{\tilde{\rho}_{s,t}^* \tilde{\chi}_s^*(\sigma^{\tilde{m}})}}
- \frac{\tilde{\phi}_t^* \tilde{\chi}_s^*(\sigma^{\tilde{m}})(x)}
{\abs{\tilde{\phi}_t^* \tilde{\chi}_s^*(\sigma^{\tilde{m}})}}.
\]
By the preceding discussion, $h_s$ is a continuous section of 
$L_{\K}$ over $\K_{[0,1]}$ and in particular its absolute value
$|h_s|$ is a continuous function from $\K_{[0,1]}$ to $\R$. 
Moreover, by the above, $h_s(x) = 0$ for $x\in X_0$.

By the continuity of $h_s$, and since we know $K_0 \subseteq
h_s^{-1}(0)$, we may cover $K_0$ with open sets $U_\alpha$ with the
property that $|h_s(x)| < \varepsilon$ for all $U_\alpha$. Extend this
to an open cover of $\K_{[0,1]}$. The compactness of $\K_{[0,1]}$
implies that there exists a finite subcover, a subset of which is a
finite cover of the subset $K_0$ of $\K_{[0,1]}$. In particular, from
the construction of the cover, we may conclude that there exists some
$t''(s)>0$ such that for all $0 \leq t < t''(s)$ and $x \in K_t$ we have
$\sabs{h_s(x)}<\varepsilon$. Now an argument similar to that in the proof
of Proposition~\ref{prop:support on fibers} shows that in fact we may
choose $t''(s)$ to be a continuous function of $s$, as claimed. 
\end{proof}

We are finally ready for a proof of
Proposition~\ref{proposition:interior}.

\begin{proof}[Proof of Proposition~\ref{proposition:interior}]
We first define the covariantly constant section $\delta_m$ and
measure $d\theta_m$. Recall that the integrable system on the variety $X$ is
defined by pulling back that on $X_0$ via $\phi_1$. In particular
$\phi_1$ induces a pullback of the action-angle coordinates on $U_0$
to $U=U_1=\phi_1^{-1}(U_0)$ and a diffeomorphism of tori $\mu_0^{-1}(m) \cong \mu^{-1}(m)
\cong (S^1)^n$. Hence by using $\phi_1$ we may pull back the
covariantly constant $\delta_{m,0}$ and measure $d\theta_{m,0}$ on
$U_0$ of Lemma~\ref{lemma:estimate on X_0} to a covariantly constant
$\delta_m$ and $d\theta_m$ respectively. 
Let $\tau \in \Gamma(U, \L^*_{\P} \vert_{U})$ be a test section. 
For simplicity of notation we
let $(\phi_1)_*\tau$ denote the section of $L^*_\P \vert_{U_0}$
obtained by using the identifications $\phi_1: U=U_1 \to
U_0$ and $\tilde{\phi}_1: L_\P \vert_{U} \to L_\P \vert_{U_0}$, so
$(\phi_1)_*\tau (x):= \tilde{\phi}_1 \circ \tau \circ \phi_1^{-1}(x)$. 
Then by definition we have 
\[
\int_{\mu_0^{-1}(m)} \langle (\phi_1)_* \tau, \delta_{m,0} \rangle
\hsm d\theta_{m,0} = \int_{\mu^{-1}(m)} \langle \tau, \delta_m \rangle
\hsm d\theta_m
\]
for any test section. 
Now let $\epsilon>0$. From the above it suffices to prove that we can find a continuous
function $t=t(s)$ and an $s_0>0$ such that for all $s>s_0$ 
we have 
\begin{equation}\label{eq:test function integral estimate}
\bigg\lvert \int_X \bigg\langle \tau, \frac{\sigma^m_{s,
    t(s)}}{\abs{\sigma^m_{s,t(s)}}} \bigg \rangle \hsm d(vol)
- 
\int_{\mu_0^{-1}(m)} \langle (\phi_1)_* \tau, \delta_{m,0} \rangle
\hsm d\theta_{m,0} \bigg\rvert < \epsilon.
\end{equation}
First, we know from Lemma~\ref{lemma:estimate on X_0} that there
exists $s_1$ such that for all $s>s_1$ we have 
\begin{equation}
\bigg\lvert\int_{X_0} \bigg\langle (\phi_1)_* \tau,
\frac{\tilde{\chi}_s^*(\sigma^{\tilde{m}})}
  {\abs{\tilde{\chi}_s^*(\sigma^{\tilde{m}})}} \bigg\rangle \hsm d(vol) 
- \int_{\mu_0^{-1}(m)} \langle (\phi_1)_* \tau, \delta_{m,0} \rangle
\hsm d\theta_{m,0} \bigg\rvert
< \frac{\epsilon}{4}.
\end{equation}
Moreover, since $\phi_1 = \phi_t \circ \phi_{1-t}$ and all maps
preserve the relevant structures, we have 
\begin{equation}\label{eq:one of the pieces of triangle inequality}
\bigg\lvert \int_{X_t} \bigg\langle (\phi_{1-t})_* \tau,
\frac{\tilde{\phi}^*_t \tilde{\chi}^*_s(\sigma^{\tilde{m}})}
  {\abs{\tilde{\phi}^*_t \tilde{\chi}^*_s(\sigma^{\tilde{m}})}}
  \bigg \rangle \hsm d(vol) 
- \int_{\mu_0^{-1}(m)} \langle (\phi_1)_* \tau, \delta_{m,0} \rangle
\hsm d\theta_{m,0} \bigg\rvert < \frac{\epsilon}{4}.
\end{equation}
Next note that 
$\tau$ is a smooth section
on the compact space $X$, so there
exists a constant $C>0$ such that $\abs{\tau}<C$ 
for all $x \in X$. 
Since $\phi_{1-t}$ preserves the Hermitian structure, 
this also implies that $\sabs{(\tilde{\phi}_{1-t})_* \tau} <C$ 
for all $x\in X_t$ and for all $t$. 
Let $K=K_1$ be the compact subset of $X=X_1$ from
Lemma~\ref{lemma:ut-kt}. Our next step is to approximate the integrals
over $X$ and $X_0$ with integrals over $K$ and $K_0$.  
Specifically, following Lemma~\ref{lemma:ut-kt}~(4) 
let $\eta>0$ be such that 
such that $B_\eta = B_{\eta}(m) \subset K$. 
By Proposition~\ref{prop:support on fibers}, there exists $s_2>0$ such
that
\[ 
\int_{X\smallsetminus B_{\eta}} \bigg\lvert{
  \frac{\sigma^m_{s,t}}{\abs{\sigma^m_{s,t}}} \bigg\rvert \hsm d(vol) 
< \frac{\epsilon}{4C}}
\]
for any $s>s_2$ and $0\leq t\leq t'(s)$ where $t=t'(s)$ is the function
constructed in the proof of Proposition~\ref{prop:support on
  fibers}. 
Since $B_\eta \subset K$ and because we have an upper bound
$\sabs{\tau}<C$ on the norm of $\tau$, we conclude
\[ \int_{X\smallsetminus K} \bigg\lvert{\frac{\sigma^m_{s,t}}{\abs{\sigma^m_{s,t}}}} \bigg\rvert \sabs{\tau} < 
\int_{X\smallsetminus B_\eta} \bigg\lvert{\frac{\sigma^m_{s,t}}{\abs{\sigma^m_{s,t}}}} \bigg\rvert \sabs{\tau} 
< \frac{\epsilon}{4}
\]
for $s>s_2$ and $0\leq t\leq t'(s)$. 
Thus 
\begin{equation}\label{eq:K approximates X}
\begin{split}
\aabs{\int_X \bigg\langle \tau, \frac{\sigma^m_{s,t}}{\abs{\sigma^m_{s,t}}} \bigg\rangle \hsm d(vol) - 
\int_K \bigg\langle \tau, \frac{\sigma^m_{s,t}}{\abs{\sigma^m_{s,t}}} \bigg\rangle \hsm d(vol)} & = 
\aabs{\int_{X\smallsetminus K} \bigg\langle \tau, \frac{\sigma^m_{s,t}}{\abs{\sigma^m_{s,t}}} \bigg\rangle \hsm d(vol)} \\
& \leq 
\int_{X\smallsetminus K} \sabs{\tau}\bigg\lvert\frac{\sigma^m_{s,t}}{\abs{\sigma^m_{s,t}}} \bigg\rvert d(vol)
< \frac{\epsilon}{4}
\end{split}
\end{equation}
for these choices of $s$ and $t$, and in this sense,
the integral over $X$ is well approximated by one over $K$.

A similar argument, using Lemma~\ref{lemma:support-section-converges-X0}
applied to $X_0$ and $K_0$,
gives an $s_3$ such that for $s>s_3$ 
\begin{equation}\label{eq:K_0 approximates X_0}
\aabs{\int_{X_0} \bigg\langle (\phi_1)_* \tau,
\frac{\tilde{\chi}_s^*(\sigma^{\tilde{m}})}
  {\abs{\tilde{\chi}_s^*(\sigma^{\tilde{m}})}} \bigg\rangle \hsm d(vol) 
- \int_{K_0} \bigg\langle (\phi_1)_* \tau,
\frac{\tilde{\chi}_s^*(\sigma^{\tilde{m}})}
  {\abs{\tilde{\chi}_s^*(\sigma^{\tilde{m}})}} \bigg\rangle \hsm d(vol) }
< \frac{\epsilon}{4}
\end{equation}
and so the integral over $X_0$ is well approximated by one over $K_0$.

Next, for any $0<t<1$  we can push forward by $\phi_{1-t}$ to 
rewrite the integral over $K$ as an integral over $K_t$ as
follows. Recalling the definition of $\sigma^m_{s,t}$
from~\eqref{eq:def sections}
we have 
\[
\int_K \bigg\langle \tau,
\frac{\sigma^m_{s,t(s)}}{\abs{\sigma^m_{s,t(s)}}} \bigg \rangle \hsm d(vol) 
= \int_{K_t} \bigg\langle (\phi_{1-t})_* \tau, 
\frac{\tilde{\rho}_{s,t}^*\tilde{\chi}_s^*(\sigma^{\tilde{m}})}
  {\abs{\tilde{\rho}_{s,t}^*\tilde{\chi}_s^*(\sigma^{\tilde{m}})}}
  \bigg\rangle \hsm d(vol).
\]
We then have the following: 
\begin{equation}\label{eq:last big sequence}
\begin{split}
\bigg\lvert
\int_{K_t} \bigg\langle (\phi_{1-t})_* \tau,
  \frac{\tilde{\rho}_{s,t}^*\tilde{\chi}_s^*(\sigma^{\tilde{m}})}
  {\abs{\tilde{\rho}_{s,t}^*\tilde{\chi}_s^*(\sigma^{\tilde{m}})}}
  \bigg\rangle \hsm d(vol) 
& -   
\int_{K_t} \bigg\langle (\phi_{1-t})_* \tau,
  \frac{\phi_t^*\tilde{\chi}_s^*(\sigma^{\tilde{m}})}
  {\abs{\phi_t^*\tilde{\chi}_s^*(\sigma^{\tilde{m}})}}
  \bigg\rangle \hsm d(vol)
\bigg\rvert  \\
& \leq 
\int_{K_t} \bigg\lvert \bigg\langle (\phi_{1-t})_* \tau,
  \frac{\tilde{\rho}_{s,t}^*\tilde{\chi}_s^*(\sigma^{\tilde{m}})}
  {\abs{\tilde{\rho}_{s,t}^*\tilde{\chi}_s^*(\sigma^{\tilde{m}})}} \bigg\rangle
- \bigg\langle (\phi_{1-t})_* \tau,
  \frac{\phi_t^*\tilde{\chi}_s^*(\sigma^{\tilde{m}})}
  {\abs{\phi_t^*\tilde{\chi}_s^*(\sigma^{\tilde{m}})}}
  \bigg\rangle \bigg\rvert \hsm d(vol) 
\\
& =
\int_{K_t} \bigg\lvert \bigg\langle (\phi_{1-t})_* \tau,
  \frac{\tilde{\rho}_{s,t}^*\tilde{\chi}_s^*(\sigma^{\tilde{m}})}
  {\abs{\tilde{\rho}_{s,t}^*\tilde{\chi}_s^*(\sigma^{\tilde{m}})}}
- \frac{\phi_t^*\tilde{\chi}_s^*(\sigma^{\tilde{m}})}
  {\abs{\phi_t^*\tilde{\chi}_s^*(\sigma^{\tilde{m}})}}
  \bigg\rangle \bigg\rvert \hsm d(vol) 
\\
& \leq 
\int_{K_t} \sAbs{(\phi_{1-t})_* \tau}
\bigg\lvert \frac{\tilde{\rho}_{s,t}^*\tilde{\chi}_s^*(\sigma^{\tilde{m}})}
  {\abs{\tilde{\rho}_{s,t}^*\tilde{\chi}_s^*(\sigma^{\tilde{m}})}}
- \frac{\phi_t^*\tilde{\chi}_s^*(\sigma^{\tilde{m}})}
  {\abs{\phi_t^*\tilde{\chi}_s^*(\sigma^{\tilde{m}})}}
\bigg\rvert \hsm d(vol) \\
& \leq 
C \int_{K_t} \bigg\lvert
\frac{\tilde{\rho}_{s,t}^*\tilde{\chi}_s^*(\sigma^{\tilde{m}})}
  {\abs{\tilde{\rho}_{s,t}^*\tilde{\chi}_s^*(\sigma^{\tilde{m}})}}
- \frac{\phi_t^*\tilde{\chi}_s^*(\sigma^{\tilde{m}})}
  {\abs{\phi_t^*\tilde{\chi}_s^*(\sigma^{\tilde{m}})}}
\bigg\rvert \hsm d(vol) \\
\end{split}
\end{equation}
where the last inequality again uses the upper bound
$\sabs{(\phi_{1-t})_*\tau}<C$. Now let $\varepsilon =
\frac{\epsilon}{2C \vol(X_t)}$. (Note that volumes are equal for all
fibers, i.e. $\vol(X)=\vol(X_t)$ for all $t$.) Applying
Lemma~\ref{last-bloody-technical-argument-left} to this value of
$\varepsilon$, we obtain a continuous monotone (non-increasing)
function $t''(s)$ of $s$ such that for all $0 < t < t''(s)$ and all $x
\in K_t$ we have 
\begin{equation}\label{eq:estimate from previous lemma}
  \bigg\lvert
\frac{\tilde{\rho}_{s,t}^*\tilde{\chi}_s^*(\sigma^{\tilde{m}})}
  {\abs{\tilde{\rho}_{s,t}^*\tilde{\chi}_s^*(\sigma^{\tilde{m}})}}
- \frac{\phi_t^*\tilde{\chi}_s^*(\sigma^{\tilde{m}})}
  {\abs{\phi_t^*\tilde{\chi}_s^*(\sigma^{\tilde{m}})}}
  \bigg\rvert
< \frac{\epsilon}{4C\vol(X_t)}
\end{equation}
which implies that the integral in~\eqref{eq:last big sequence}
is less than $\epsilon/4$.

Finally, let $t(s) = \min\{t'(s), t''(s)\}$ be the minimum of 
the two continuous functions $t'(s)$ and $t''(s)$ defined earlier.  
Then $t(s)$ is a continuous, positive, decreasing function of $s$, 
and the estimates in ~\eqref{eq:K approximates X}
and~\eqref{eq:last big sequence} 
hold for all $0<t<t(s)$.
Let $s_0 = \max\{s_1, s_2, s_3\}$.  
The triangle inequality then implies that the LHS
of~\eqref{eq:test function integral estimate} is
less than or equal to the sum of the left-hand sides 
of~\eqref{eq:one of the pieces of triangle inequality},
\eqref{eq:K approximates X}, \eqref{eq:K_0 approximates X_0},
and~\eqref{eq:last big sequence},
each of which is less than $\epsilon/4$ for $s>s_0$.
This implies that, for this function $t(s)$ and this choice of $s_0$, 
the inequality~\eqref{eq:test function integral estimate} holds for $s>s_0$, 
and we are finished with the proof of Proposition~\ref{proposition:interior}.
\end{proof}

In Section~\ref{ss:varying-cplx-str} 
we constructed for each $s$ and $t$ a complex structure $J_{s,t}$ 
on $X$ that is compatible with the symplectic structure. Recall also
that 
the parameter $s \in [0,\infty)$ corresponds to the 
deformation of complex structures
while the $t \in [0,1]$ parameter corresponds to the gradient-Hamiltonian flow 
from $X_1$ to $X_0$. 
Our final task is to  
construct the family $J_s$ of complex structures and the basis of
sections $\sigma^m_s$ in the statement of our main
Theorem~\ref{theorem:main}. 
The basic idea behind the definition below is to let $s$ go to $\infty$ and $t$ go to $0$
simultaneously 
in such a way that 
the convergence which is claimed in Theorem~\ref{theorem:main} occurs.  
Indeed, in Proposition~\ref{proposition:interior} we constructed a
continuous function $t(s)$ of $s$ which gives rise to certain
key estimates for any $s>0$ and $0 \leq t < t(s)$. 
Thus, setting $J_s := J_{s,t(s)}$ would (nearly) do
the job; however, 
although we need $J_0$ to be the
original complex structure (which is $J_{0,1}$) on $X$, our
construction of $t(s)$ does not
guarantee that $t(0)=1$. 
The solution to this problem 
is, roughly speaking, to first move along the gradient-Hamiltonian flow to $t_0$ 
while keeping $s=0$ before ``turning on'' the other deformation. 
More precisely, we make the following definition. 

\begin{definition}\label{def:J_s}
Let $t(s)$ denote the continuous function constructed in
Proposition~\ref{proposition:interior}. For $s \in [0,\infty)$, we define 
\[
J_s := 
\begin{cases}
J_{0,1+(t_0-1)s} & \textup{ if } 0\leq s \leq 1 \textup{ and }  \\
J_{s-1,t(s-1)}  & \textup{ if } s > 1.
\end{cases}
\]
\end{definition}
Note that by construction $J_0 = J_{0,1}$ is the original complex structure on $X$, 
and as $s\to \infty$, $J_s$ has the same convergence properties as $J_{s,t(s)}$.
Moreover, by construction, the family $J_s$ is continuous with respect
to the parameter $s$. 
We are finally ready to prove our main
Theorem~\ref{theorem:main}.

\begin{proof}[Proof of Theorem~\ref{theorem:main}]
Consider the family $\{J_s\}_{s \in [0,\infty)}$ given in
Definition~\ref{def:J_s}. As already noted above, $\{J_s\}$ is a
continuous family and $J_0=J_{0,1}$ is the original complex structure
by construction. By Lemma~\ref{lemma:pairs Kahler}, the pair
$(\omega=\omega_1, J_s)$ is a K\"ahler structure on $X$ for each $s
\in [0,\infty)$. Now define 
\begin{equation}\label{eq:def sigma m s}
\sigma^m_s := 
\begin{cases} 
\sigma^m_{0,1+(t_0-1)s} & \textup{ if } 0 \leq s \leq 1 \\
\sigma^m_{s-1,t(s-1)} & \textup{ if } s > 1.
\end{cases}
\end{equation}
By construction, $\sigma^m_s$ is an element of $H^0(X, L,
\overline{\partial}_s)$. Moreover, by definition $J_s$ and
$\sigma^m_s$ have the same limiting properties as $J_{s,t(s)}$ and
$\sigma^m_{s,t(s)}$, and for any interior point $m \in W_0$ it was
shown in Proposition~\ref{proposition:interior} that $\sigma^m_{s,t(s)}$ has the required
limiting property of~\eqref{eq:equation in main theorem} in the statement of
Theorem~\ref{theorem:main}. Thus it remains only to ensure that for
each fixed $s$ w have a basis of $H^0(X, L,
\overline{\partial}_s)$. By the limiting properties of the
$\sigma^m_s$, we know that as $s$ goes to $\infty$, the supports of 
the sections $\sigma^m_s$ are increasingly concentrated in pairwise
disjoint neighborhoods. It follows that the set 
$\{\sigma^m_s\}$ must be linearly independent for $s>s_0$ with $s_0$
sufficiently large. Since
the complex manifolds $(X,J_s)$ and holomorphic line bundles $(L,
\overline{\partial}_s)$ are isomorphic for all $s$, we also know
that $\dim H^0(X, L, \overline{\partial}_s)$ is constant
for all $s$, and in particular by assumption (h) in the statement of
Theorem~\ref{theorem:main}, we have $\dim
H^0(X,L,\overline{\partial}_s) = |W_0|$ for all $s$. Thus for $s \geq
s_0$ the set $\{\sigma^m_s\}$ is linearly independent and also spans, so it is a basis of
$H^0(X,L,\overline{\partial}_s)$ as desired. For $0 \leq s \leq s_0$,
following \cite[Section 7.2]{HamKon} we extend the basis
$\{\sigma^m_s\}_{m \in W_0}$, $s \geq s_0$, to bases of $H^0(X,L,
\overline{\partial}_s)$ for $s$ satisfying $0 \leq s \leq s_0$ in a way that
preserves the continuity in the parameter $s$. This family then
satisfies all the required properties. 
\end{proof}

\section{Toric degenerations coming from valuations and 
Newton-Okounkov bodies}\label{sec:NOBY}

In this section we will show that the toric degenerations coming 
from Newton-Okounkov bodies as in~\cite{HarKav} 
can be used to create many examples of algebraic varieties $X$ with prequantum
data $(\omega, J, L, h, \nabla)$ which satisfy the hypotheses of
Theorem~\ref{theorem:main}. This is the content of the main result of
this section, Theorem~\ref{theorem:toric deg from NOBY}. 

We first very briefly recall the ingredients in the definition of a
Newton-Okounkov body. For details we refer the reader to \cite{KavKho,
  LazMus} and also \cite{HarKav}.  We begin with the definition of a
valuation (in our setting). We equip $\Z^n$ with a group ordering
e.g. a lexicographic order.

\begin{definition}\label{definition:valuation}
\begin{enumerate}
  \item   Let $A$ be a $\C$- algebra. A {\bf valuation} on $A$   is a function  \[\nu:A\setminus\{0\}\rightarrow \Z^n \] satisfying the following:
        \begin{enumerate}
          \item  $ \nu(c f)=\nu(f)$ for all $f\in A\setminus\{0\}$ and $c\in\C\setminus\{0\}$,
          \item $\nu(f+g) \geq \min\{\nu(f),\nu(g)\}$ for all $f, g \in A
  \setminus \{0\}$ with $f+g \neq 0$.
        
        \item $\nu(fg)=\nu(f) + \nu(g)$ for all $f,g \in A \setminus \{0\}$.
        \end{enumerate}

 \item The image $\nu(A\setminus\{0\})\subset\Z^n$  of a valuation $\nu$ on a $\C$-algebra $A$  is clearly a semigroup
and is called the \emph{value semigroup} of the pair
$(A,\nu)$.
 \item Moreover, if in addition the valuation has the property
that for any pair $f,g\in A\setminus\{0\}$ with same value
  $\nu(f)=\nu(g)$ there exists a non-zero constant
  $c \neq 0 \in \C$ such that  either $\nu(g-c f)>\nu(g)$ or else
  $g-c f = 0$
then we say that  the valuation has \emph{one-dimensional leaves}.
\end{enumerate}

\end{definition}

If $\nu$ is a valuation with one-dimensional leaves, then the image of
$\nu$ is a sublattice of $\Z^n$ of full rank.  Hence, by replacing
$\Z^n$ with this sublattice if necessary, we will always assume
without loss of generality that $\nu$ is surjective.

Given a variety $X$, there exist many possible valuations with
one-dimensional leaves on its field of rational functions 
$\C(X)$. Strictly speaking, we do not need detailed knowledge of the
construction in this paper, but we note for the reader's reference
that the following example is the one which arises naturally 
in geometric contexts: 
for an $n$-dimensional variety $X$, a choice of an (ordered) coordinate
system at a smooth point $p$ on $X$ gives a valuation on $\C(X)$ with
one-dimensional leaves, essentially by computing the order of the zero
or pole with respect to the coordinates. See
e.g. \cite[Examples 2.12 and 2.13 ]{KavKho} or \cite{LazMus} for details. 

The following proposition is simple but fundamental \cite[Proposition 2.6]{KavKho}: 
\begin{proposition} \label{prop-val-dim}
Let $\nu$ be a valuation on $\C(X)$ with one-dimensional leaves. Let $V \subset \C(X)$ be a finite-dimensional subspace of $\C(X)$.
Then $\dim_\C(V) = \sAbs{\nu(V \setminus \{0\})}$. 
\end{proposition}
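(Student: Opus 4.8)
The plan is to exhibit a basis of $V$ whose image under $\nu$ is exactly $\nu(V\setminus\{0\})$, which simultaneously shows that $\nu$ restricted to $V\setminus\{0\}$ takes exactly $\dim_\C(V)$ distinct values and that these values are in bijection with a chosen basis. First I would record the elementary but crucial observation that $\nu$ separates linearly independent elements in a strong sense: if $f_1,\dots,f_k\in V\setminus\{0\}$ have \emph{pairwise distinct} values $\nu(f_1),\dots,\nu(f_k)$, then they are linearly independent. This follows from the ultrametric-type inequality in Definition~\ref{definition:valuation}(1)(b): any nontrivial linear combination $\sum c_i f_i$ (with not all $c_i$ zero) has $\nu\bigl(\sum c_i f_i\bigr) = \min\{\nu(f_i) : c_i\neq 0\}$, because the minimum is attained uniquely (the values being distinct), so the combination cannot be zero. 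In particular $\sAbs{\nu(V\setminus\{0\})}\le \dim_\C(V)$.

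For the reverse inequality, the key input is the \emph{one-dimensional leaves} hypothesis. I would argue that for each value $v\in\nu(V\setminus\{0\})$, the set $V_{\ge v} := \{f\in V\setminus\{0\} : \nu(f)\ge v\}\cup\{0\}$ is a subspace, and the subset $V_{>v}$ defined analogously (with strict inequality) is a subspace of codimension at most one inside $V_{\ge v}$: indeed, if $f,g\in V_{\ge v}$ both have $\nu(f)=\nu(g)=v$, then by the one-dimensional-leaves property there is $c\neq 0$ with $\nu(g-cf)>v$ or $g-cf=0$, i.e. $g\equiv cf$ modulo $V_{>v}$. Hence the ``leaf'' $V_{\ge v}/V_{>v}$ is at most one-dimensional, and it is exactly one-dimensional precisely when $v\in\nu(V\setminus\{0\})$. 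Since $V$ is finite-dimensional there are only finitely many values $v$ appearing, and summing the contributions $\dim\bigl(V_{\ge v}/V_{>v}\bigr)$ over a chain of subspaces refining the filtration by values yields $\dim_\C(V) = \sAbs{\nu(V\setminus\{0\})}$. Concretely, I would order the finitely many values $v_1 > v_2 > \cdots > v_r$ (using the fixed group ordering on $\Z^n$) and build a basis by picking, for each $i$, an element $f_i\in V$ with $\nu(f_i)=v_i$; the separation lemma above guarantees linear independence, and the leaf-dimension count guarantees spanning.

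The main obstacle — though it is more bookkeeping than conceptual — is making the filtration argument airtight: one must check that the $V_{\ge v}$ really do form a finite descending chain of subspaces exhausting $V$ (which uses finite-dimensionality together with the fact that $\nu$ on a finite-dimensional space takes finitely many values, itself a consequence of the separation lemma), and that the one-dimensional-leaves condition as stated for \emph{all} of $\C(X)$ transfers correctly to the subspace $V$. Once that filtration is set up, both inequalities drop out, and the proposition follows.
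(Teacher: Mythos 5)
The paper does not supply its own proof of this proposition; it is quoted without proof from \cite[Proposition 2.6]{KavKho}. Your argument is correct and is the standard one: the ultrametric identity $\nu\bigl(\sum c_i f_i\bigr)=\min\{\nu(f_i):c_i\neq 0\}$ when the $\nu(f_i)$ are pairwise distinct gives $\sAbs{\nu(V\setminus\{0\})}\le\dim_\C V$, and the one-dimensional-leaves hypothesis applied to the finite filtration $V_{\ge v}$ gives the reverse inequality. The only point worth tightening is that you should note explicitly that $V_{>v_i}=V_{\ge v_{i+1}}$ for consecutive values $v_i<v_{i+1}$ of $\nu$ on $V\setminus\{0\}$, so that the graded pieces of your filtration are exactly the quotients you computed to be one-dimensional; with that observed, the dimension count $\dim_\C V = r = \sAbs{\nu(V\setminus\{0\})}$ falls out, and your chosen elements $f_1,\dots,f_r$ (one per value) form a basis.
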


Let $X$ be a projective variety of dimension $n$ over $\C$ equipped
with a very ample line bundle $L$.  Let $E
:= H^0(X, L)$ denote the space of global sections of $L$; it is a
finite dimensional vector space over $\C$. The line bundle $L$ gives
rise to the \emph{Kodaira map $\Phi_E$ of $E$}, 
from $X$ to the projective space $\p(E^*)$. 
The assumption that $L$ is very ample implies that the Kodaira map $\Phi_E$ is an embedding.

Now let $E^k$ denote the image of the $k$-fold product $E \otimes
\cdots \otimes E$ in $H^0(X, L^{\otimes k})$ under the natural map
given by taking the 
product of sections. (In general this map may not be surjective.) The
homogeneous coordinate ring of $X$ with respect to the 
embedding $\Phi_E: X \hookrightarrow \p(E^*)$ can be identified with the graded algebra 
$$R = R(E) = \bigoplus_{k \geq 0} R_k,$$ where $R_k := E^k$. This is a subalgebra of the {\it ring of sections} 
$$R(L) = \bigoplus_{k \geq 0} H^0(X, L^{\otimes k}).$$

For a fixed $\nu$ we now associate a semigroup $S(R) \subset \N \times \Z^n$
to $R$. 
First we identify $E = H^0(X, L)$ with a (finite-dimensional)
subspace of $\C(X)$ by choosing a non-zero element $h \in E$ and
mapping $f \in E$ to the rational function $f/h \in \C(X)$. Similarly,
we can associate the rational function $f/h^k$ to an element $f \in R_k := E^k \subseteq H^0(X,
L^{\otimes k})$. We define 
\begin{equation}\label{eq:definition S}
S = S(R) = S(R,\nu,h) = \bigcup_{k > 0} \{ (k, \nu(f / h^k)) \mid f \in
E^k \setminus \{0\}\}.
\end{equation}
If $f \in R_k = E^k$ is a homogeneous element of degree $k$ we also define:
$$\tilde{\nu}(f) = (k, \nu(f/h^k)).$$

Now define
$C(R) \subseteq \R \times \R^n$ to be the cone generated by the
semigroup $S(R)$, i.e., it is the smallest closed convex cone centered
at the origin containing $S(R)$. We can now define the central object
of interest.

\begin{definition}\label{definition:NO}
  Let $\Delta=\Delta(R)=\Delta(R,\nu)$ be the slice of the cone $C(R)$ at level 1, that is, $C(R)\cap (\{1\}\times \R^n)$,
  projected to $\R^n$ via the projection to the second factor $\R
  \times \R^n \to \R^n$. We have
\[
\Delta = \overline{ \textup{conv} \left( \bigcup_{k>0}
    \left\{\frac{x}{k} :  (k,x) \in S(R) \right\} \right) }.
\]
The convex body $\Delta$ is called the \emph{Newton-Okounkov body of
  $R$} with respect to the valuation $\nu$.
\end{definition}

From now on, we place
the additional assumption that: 
\begin{center}  
\emph{$S$ is finitely generated.} 
\end{center} 
The above assumption is a rather strong condition on $(X, L, \nu)$ but
it holds in many cases of importance.  We note that it is possible to
have a finitely generated semigroup $S$ for one choice of a valuation
$\nu$ and a non-finitely generated one for a different choice of
$\nu$. From the above assumption it follows that the Newton-Okounkov
body $\Delta(R)$ is a rational polytope.  In this context, Anderson
proved the following \cite[Corollary 5.3]{Anderson}.

\begin{theorem} \label{th-toric-degen-NO-body}
There is a flat family $\pi: \X \to \C$ such that:
\begin{itemize}
\item[(a)] For any $z \neq 0$ the fiber $X_z = \pi^{-1}(z)$ is
  isomorphic to $X$, and 
$\pi^{-1}(\C^*)$ is isomorphic to $X \times \C^*$. For the remainder
of the discussion 
we fix an isomorphism $X \times \C^* \to \pi^{-1}(\C^*) \subset \X$.
\item[(b)] The special fiber $X_0 = \pi^{-1}(0)$ is isomorphic to
  $\textup{Proj}(\textup{gr} R) \cong \textup{Proj}(\C[S])$ and is equipped with an action of
  $\T = (\C^*)^n$, where $n=\dim_\C X$. 
The normalization of the variety 
$\textup{Proj}(\textup{gr} R)$ is the toric variety associated to the rational polytope $\Delta(R)$.
\end{itemize}
\end{theorem}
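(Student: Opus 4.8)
This is \cite[Corollary 5.3]{Anderson}, so I only sketch the construction; the details are in \cite{Anderson}. The plan is to realize the family as the relative $\mathrm{Proj}$ of a Rees-type algebra interpolating between the homogeneous coordinate ring $R$ and its associated graded ring with respect to the valuation. First one uses $\tilde{\nu}$ to equip $R$ with an $(\N \times \Z^n)$-filtration, where $F_{\geq v}$ is spanned by the homogeneous $f$ with $\tilde{\nu}(f) \geq v$ (using the chosen group ordering on $\Z^n$), and one forms the associated graded algebra $\mathrm{gr}\,R := \bigoplus_{v} F_{\geq v}/F_{>v}$. The assumption that $\nu$ has one-dimensional leaves, together with Proposition~\ref{prop-val-dim}, shows that $F_{\geq v}/F_{>v}$ is one-dimensional precisely when $v \in S = S(R)$ and is zero otherwise; since $\tilde{\nu}(fg) = \tilde{\nu}(f) + \tilde{\nu}(g)$, the product on $\mathrm{gr}\,R$ is the evident one, yielding a graded isomorphism $\mathrm{gr}\,R \cong \C[S]$. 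Because $S$ is finitely generated, $\C[S]$, and hence $\mathrm{gr}\,R$, is a finitely generated $\C$-algebra.

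Next one forms the Rees algebra. Lifting a finite set of semigroup generators of $S$ to elements of $R$ produces a $\C[t]$-subalgebra $\widetilde{R} \subseteq R \otimes_\C \C[t,t^{-1}]$, graded by the $\N$-factor of $S$ so that $\mathrm{Proj}$ is defined, with $\widetilde{R} \otimes_{\C[t]} \C[t,t^{-1}] \cong R \otimes_\C \C[t,t^{-1}]$ and $\widetilde{R}/t\widetilde{R} \cong \mathrm{gr}\,R$. Put $\mathcal{X} := \mathrm{Proj}_{\C[t]}\widetilde{R}$ with structure map $\pi\colon \mathcal{X} \to \mathrm{Spec}\,\C[t] = \C$. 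Flatness of $\pi$ follows since $\widetilde{R}$ is a subalgebra of $R \otimes_\C \C[t,t^{-1}]$, hence torsion-free over the PID $\C[t]$, hence flat. By construction the fibre over $z \neq 0$ is $\mathrm{Proj}\,R \cong X$ and the family is trivial over $\C^*$, while the fibre over $0$ is $\mathrm{Proj}\,\mathrm{gr}\,R \cong \mathrm{Proj}\,\C[S]$; this establishes (a) and the first isomorphism in (b).

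Finally, the residual $\Z^n$-grading on $\C[S]$ induces an algebraic action of $\T = (\C^*)^n$ on $X_0 = \mathrm{Proj}\,\C[S]$; since $\Delta(R)$ is $n$-dimensional this action has a dense orbit, so $X_0$ is a (possibly non-normal) toric variety. Normalizing corresponds to replacing $S$ by its saturation $\bar{S} = C(R) \cap (\N \times \Z^n)$: the semigroup algebra $\C[\bar{S}]$ is integrally closed and $\mathrm{Proj}\,\C[\bar{S}]$ is by definition the projective toric variety associated to the rational polytope $\Delta(R) = C(R) \cap (\{1\} \times \R^n)$, which gives the last assertion. The delicate point to check carefully is the construction of $\widetilde{R}$ together with the identification of its special fibre with $\mathrm{gr}\,R$ (rather than with something carrying embedded components) --- this is exactly where finite generation of $S$ is used and is the substance of the cited results in \cite{Anderson}; a secondary subtlety, already visible in the setup above, is that because $E \otimes \cdots \otimes E \to H^0(X, L^{\otimes k})$ need not be surjective one must work throughout with $R = R(E)$ and the valuation $\tilde{\nu}$ adapted to it, rather than with the full ring of sections $R(L)$.
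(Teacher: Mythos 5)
Your proposal is correct and takes essentially the same route as the source: the paper simply cites Anderson's Corollary~5.3 and then recalls the explicit Khovanskii-basis construction of $\X \subseteq \P \times \C$, which is precisely the Rees-algebra picture you sketch (the integers $w_{ij}$ in the paper's discussion supply the linearization of the $\Z^n$-valued filtration needed to form $\widetilde{R}$). The delicate point you flag at the end is indeed exactly what the finite generation of $S$ and the choice of weights are used for in Anderson's argument.
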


The explicit construction of the family $\X$ in \cite{Anderson}
depends on a choice of a so-called Khovanskii basis
$\mathcal{B} = \{f_{ij}\}$ (cf. \cite[Definition 8.1]{HarKav}, and
also see
\cite{KavMan} for a general theory of Khovanskii bases). The set
$\mathcal{B}$ also allows us to concretely embed $\X$ as a subvariety
of $\P \times \C$ for an appropriate ``large'' projective space
$\P$. Some of the details are relevant for our later discussions so we
briefly recall the construction here; for details we refer to
\cite[Sections 8 and 9]{HarKav}.

By assumption the semigroup $S \subset \N \times \Z^n$ is finitely generated. So we can find a finite set consisting of homogeneous elements in $R$ such that their valuations are a set of generators for $S$. More precisely, let $r>0$ be a positive integer and let $\mathcal{B} = \{f_{ij}\}$, for $1 \leq i \leq r$, $1 \leq j \leq n_i = \dim(R_i)$, be a finite set of elements in $R$ satisfying the following properties:
\begin{enumerate} 
\item[(a)] the $f_{ij}$ are homogeneous, with $f_{ij} \in R_i$ for all $1 \leq i \leq r, 1 \leq j \leq n_i$, and 
\item[(b)] for each $i$, the collection $\{f_{i1}, f_{i2}, \ldots, f_{i n_i}\}$ is a vector space basis for $R_i$, 
\item[(c)] the set of images $\tilde{\nu}(\mathcal{B}) = \{\tilde{\nu}(f)
\mid f \in \mathcal{B}\}$ generate $S=S(R)$, 
\end{enumerate}
For the remainder of this discussion we fix this ``Khovanskii basis''
$\mathcal{B}$. 

We now describe more explicitly, in terms of the Khovanskii basis, the
toric degeneration $\X$ constructed in \cite{Anderson} and a
concrete embedding of $\X$ into $\P \times \C$ for a suitable large
projective space $\C$. 
Let $S_d := S \cap (\{d\} \times \Z^n)$ denote the level-$d$ piece of
the semigroup $S$. By Proposition~\ref{prop-val-dim}, $\dim(E^d) = 
|\nu(E^d)| = |S_d|$, and since the $\{f_{ij}\}$ form a Khovanskii
basis, for each $s \in S_d$ we know there
exists some monomial $f_{11}^{\alpha_{11}} f_{12}^{\alpha_{12}} \cdots
  f_{r n_r}^{\alpha_{r n_r}}$ in the $\{f_{ij}\}$'s, where
  $\sum_{i=1}^r i \sum_{j=1}^{n_i} \alpha_{ij} = d$, such that $\nu(f_{11}^{\alpha_{11}} f_{12}^{\alpha_{12}} \cdots
  f_{r n_r}^{\alpha_{r n_r}}) = s$. So for each $s \in S$ we fix a
  choice of such exponents $\alpha_s := (\alpha_{(ij),s})$ such that
  the above holds. Then the set 
\begin{equation}\label{eq:basis for E^d}
\{ f_{11}^{\alpha_{(11),s}} f_{12}^{\alpha_{(12),s}} \cdots
  f_{r n_r}^{\alpha_{(r n_r),s}} \mid s \in S_d\}
\end{equation}
 forms a basis for
  $E^d$. In \cite{Anderson}, a collection of integers $w_{ij}$ are
  associated to the $f_{ij}$ in a certain way (for details see
  \cite{Anderson} and \cite[Section 8]{HarKav}). Using these integers
  $w_{ij}$ and the above choices we can describe explicitly the toric
  degeneration $\X$ and its embedding as follows. We first define a
  morphism $X \times \C^* \to \p((E^d)^*) \times \C^*$ by
  expressing the Kodaira embedding $X \to \p((E^d)^*)$ explicitly
  using the above basis for $E^d$. In coordinates
  we have 
\begin{equation}\label{eq:embedding of family}
(x,t) \mapsto \bigg( \Big( t^{\sum_{ij}w_{ij} \alpha_{ij}} f_{11}(x)^{\alpha_{11}} \cdots
    f_{rn_r}(x)^{\alpha_{rn_r}} \; \bigg\lvert \; \alpha_{ij} = \alpha_{(ij), s} \; , \; s \in S_d \Big), t\bigg)
\end{equation}

Then the toric
degeneration $\X \subseteq \P \times \C$ is defined to be the closure
of the image of~\eqref{eq:embedding of family}. By its construction,
$X$ is isomorphic to the fiber $X_1$ and $X_0$ is a toric variety
\cite[Corollary 5.3]{Anderson}. 

Note that the pullback to $X$ of the line bundle $L_\P$ over $\P$ is
$L^{\otimes d}$ by construction. Given any prequantum data
$(\omega_\P, L_\P, h_\P, \nabla_\P)$ on $\P = \p((E^d)^*)$, it is
clear that this data
can be pulled back via the embedding~\eqref{eq:embedding of family} to
prequantum data $(\omega, L^{\otimes d}, h, \nabla)$ on the line
bundle $L^{\otimes d}$ over $X$. 

We have the following, which is the main result of this section. 

\begin{theorem}\label{theorem:toric deg from NOBY}
Let $X$ be a smooth, irreducible complex algebraic variety with
$\dim_{\C}(X) = n$, let $L$ be a very ample line bundle on $X$, and
$E:=H^0(X,L)$. Then there exists a sufficiently large positive integer
$d$ and prequantum data $(\omega_\P, h_\P, \nabla_\P)$ on $L_\P \to
\P$ such that the family $\X \subseteq \P \times \C$ constructed above
is a toric degeneration of $X$ in the sense of
Section~\ref{sec-main-result}, and moreover, with respect to the
pullback prequantum data $(\omega, h, \nabla)$ on
$L^{\otimes d} \to X$, this toric degeneration satisfies all the required
hypotheses (a)-(h), and thus gives 'convergence of polarization' in these cases. 
\end{theorem}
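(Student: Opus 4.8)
The plan is to take the ambient space to be $\P = \p((E^d)^*)$ for a suitably large $d$, to let $(\omega_\P, h_\P, \nabla_\P)$ be the standard Fubini-Study prequantum data on $L_\P = \mathcal{O}(1)$ (with $\omega_\P$ normalised so that $[\omega_\P] = c_1(\mathcal{O}(1))$ and $\nabla_\P$ the associated Chern connection), and to declare the prequantum data on $X$ to be the pullback of this data along the Kodaira embedding $X \hookrightarrow \P$ determined by $E^d$ (equivalently, along the $t=1$ slice of~\eqref{eq:embedding of family}). With these choices, the task is to check the eight hypotheses (a)--(h) of Theorem~\ref{theorem:main} for the pair $(X, L^{\otimes d})$. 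Hypotheses (a)--(d) are, up to trivial remarks about the prequantum data, part of the Anderson construction as developed in \cite{HarKav, Anderson}: (a) is immediate because $\X$ is by construction the closure in $\P \times \C$ of the image of~\eqref{eq:embedding of family}, whose second coordinate is $t$; (b) holds because $\T_0$ acts on $X_0 \cong \textup{Proj}(\C[S])$ through the diagonal action $t\cdot[x_s]_{s\in S_d} = [t^s x_s]_{s\in S_d}$ of $\T_0 \hookrightarrow \T_\P$ on $\P$; (c) holds by construction, since the identification $\varrho_1$ is exactly the $t=1$ slice of~\eqref{eq:embedding of family} and the data on $X$ was defined to be the corresponding pullback (the $\omega$-part being the one already used in \cite{HarKav}); and (d) holds because the Fubini-Study form is $T_\P$-invariant, hence $T_0$-invariant, while $T_0$ acts trivially on the $\C$-factor. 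The substantive work is therefore in (e)--(h).

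For (f), note that $H^0(\P, L_\P) \cong E^d$ and that its restriction map to $X \cong X_1$ has image the subspace $E^d \subseteq H^0(X, L^{\otimes d})$. Working instead with the fixed embedding $\Phi_E\colon X \hookrightarrow \p(E^*) =: \p^r$ by the complete linear system $|L|$, so that $\mathcal{O}_X(1) = L$ and $\mathcal{O}_X(d) = L^{\otimes d}$, the short exact sequence $0 \to \mathcal{I}_{X/\p^r}(d) \to \mathcal{O}_{\p^r}(d) \to \mathcal{O}_X(d) \to 0$ together with $H^1(\p^r, \mathcal{O}(d)) = 0$ identifies the cokernel of $\textup{Sym}^d E = H^0(\p^r, \mathcal{O}(d)) \to H^0(X, L^{\otimes d})$ (which is precisely $H^0(X, L^{\otimes d})/E^d$) with a subspace of $H^1(\p^r, \mathcal{I}_{X/\p^r}(d))$. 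By Serre vanishing this group is zero for $d \gg 0$; I fix $d$ at least this large, so $E^d = H^0(X, L^{\otimes d})$ and the restriction map in (f) is surjective. Hypothesis (h) then follows by a dimension count: $\dim_\C H^0(X, L^{\otimes d}) = \dim_\C E^d = |S_d|$ by Proposition~\ref{prop-val-dim}, while $\Delta_\P$ is a lattice simplex whose only lattice points are its $N+1 = \dim_\C E^d = |S_d|$ vertices, and $\iota^*$ carries these injectively onto a translate of $S_d$; hence $|W_0| = |S_d| = \dim_\C H^0(X, L^{\otimes d})$.

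Hypotheses (g) and (e) both reduce to the semigroup fact that the subgroup $G_d \subseteq \Z^n$ generated by the differences $s - s'$ with $s, s' \in S_d$ equals $\Z^n$ once $d$ is large. Indeed, since $\nu$ is assumed surjective, any $u \in \Z^n$ equals $\nu(g)$ for some nonzero rational function $g$ on $X$, and for $k$ large one may write $g = p/q$ with $p, q \in H^0(X, L^{\otimes k}) = E^k$ (using ampleness of $L$ and Serre vanishing once more), so $u = \nu(p/h^k) - \nu(q/h^k) \in S_k - S_k \subseteq G_k$; moreover $G_a \subseteq G_{a+b}$ whenever $S_b \neq \emptyset$, so the subgroups $G_d$ are eventually nondecreasing and hence stabilise to $\Z^n$. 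Enlarging $d$ accordingly, (g) holds because the image of $\iota^*\colon (\t_\P^*)_\Z \to (\t_0^*)_\Z$ is, with the above description of the $\T_0$-action on $\P$, exactly $G_d = \Z^n$; and (e) holds because the all-ones point $[1:1:\cdots:1]$ lies on $X_0 \cong \textup{Proj}(\C[S])$ (being the image of the semigroup homomorphism $\chi^s \mapsto 1$) and its $\T_0$-orbit $\{[t^s]_{s\in S_d} : t \in (\C^*)^n\}$ has dimension $n$ precisely because $G_d = \Z^n$, so, $X_0$ being irreducible of dimension $n$, this orbit is dense and $X_0$ is its closure. Once (a)--(h) are established for $(X, L^{\otimes d})$, Theorem~\ref{theorem:main} applies and gives `convergence of polarization'. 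I expect the main obstacle to be (e): that the explicit central fibre of~\eqref{eq:embedding of family} really is the orbit closure through the all-ones point is not automatic for small $d$, and it rests on exactly the combinatorial input (that $S_d$ spans the lattice for $d$ large) that also underlies (g).
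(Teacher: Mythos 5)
Your proposal matches the paper's proof in essentials: same ambient $\P = \p((E^d)^*)$, same identification $W_0 = S_d$, same invocation of Serre vanishing for (f), same reduction of (g) and (e) to lattice-generation by $S_d$. The only genuine additions you make are to fill in details the paper leaves to citation or to a remark that something ``is easy to see'': for (f) you reproduce the $H^1(\p^r, \mathcal{I}_{X/\p^r}(d)) = 0$ argument behind the Hartshorne exercise the paper cites; for (g) you give a concrete proof that the difference group $G_d := \langle S_d - S_d\rangle$ stabilises to $\Z^n$ (via surjectivity of $\nu$, writing rational functions as ratios of sections, and $G_a \subseteq G_{a+b}$), whereas the paper simply asserts this; and for (e) you explain directly why the $\T_0$-orbit of $[1:\cdots:1]$ is dense in $X_0$ (dimension count using $G_d = \Z^n$ and irreducibility), whereas the paper cites Anderson's Proposition~5.1 and Section~8 of \cite{HarKav}. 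One small terminological point: the paper phrases (g) as ``$S_d$ generates $M$'' and claims $S_d \subseteq \iota^*((\t_\P^*)_\Z)$, which is literally true only up to a translation; your formulation in terms of $G_d = \operatorname{image}(\iota^*)$ is the precise statement, and your ``translate of $S_d$'' in (h) makes the same adjustment. These are refinements rather than a different route, so the proof is the same modulo elaboration.
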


\begin{proof}[Proof of Theorem~\ref{theorem:toric deg from NOBY}]
  The fact that $\X \subseteq \P \times \C$ is a toric degeneration
  satisfying the hypotheses (a) and (b) of Theorem~\ref{theorem:main}
  follows from the construction in \cite{Anderson} and is shown in
  \cite{HarKav}. Moreover, in \cite[Section 9]{HarKav} an appropriate
  K\"ahler structure $\omega_\P$ on $\P$ is constructed which
  satisfies condition (d). Indeed, the construction of $\omega_\P$ is
  by pulling back a Fubini-Study form associated to a hermitian
  structure on an (even larger) projective space, and in particular --
  by pulling back the standard prequantum data on a projectivization
  of a vector space equipped with a hermitian structure -- it is clear
  that we can construct the prequantum data $(\omega_\P, h_\P,
  \nabla_\P)$ compatible with $\omega_\P$. Now the hypothesis (c) 
  follows by construction, since the relevant prequantum data are
  defined via pullbacks.

We also claim that hypothesis (e) holds in our situation. Indeed, by
our choice of basis~\eqref{eq:basis for E^d} of $E^d$, it follows that
our embedding~\eqref{eq:embedding of family} has the property that
the coordinates of the embedding correspond exactly to elements of
$S_d$. In particular, it follows from \cite[Proposition 5.1]{Anderson} and
\cite[Section 8]{HarKav} that the special fiber $X_0$ is exactly the
closure of a $\mathbb{T}_0$-orbit through a point of the form
$[1:1:\cdots:1]$, where $\mathbb{T}_0 \cong (\C^*)^n$ acts by weight $s \in S_d
\subseteq \Z^n$ on the coordinate associated to $s \in S_d$. 
It also follows that the set $W_0$ defined in~\eqref{eq:def W_0} is
precisely $S_d \subseteq \Z^n$, and thus hypothesis (h) follows from
the fact, already observed above, that $\dim E^d = \dim H^0(X,L^{\otimes d}) =
|S_d|$. 

Next, it is well-known
\cite[Chapter II, Section 5, Exercise 5.14(b)]{Hartshorne} that for
sufficiently large $d \gg 0$, we have $E^d = H^0(X,L^{\otimes d})$, so in
particular $\p((E^{d})^*) = \p((H^0(X,L^{\otimes {d}}))^*)$ and the restriction map
$H^0(\P,L_\P) = H^0(\p(H^0(X,L^{\otimes {d}})^*), \mathcal{O}(1)) \to H^0(X,
L^{\otimes {d}})$ is surjective, so $\X$ satisfies hypothesis (f)
for sufficiently large $d$.

Finally, we claim that for sufficiently large $d \gg 0$ we also have that
hypothesis (g) holds. 
By definition of the torus $\mathbb{T}_0$, the graded semigroup $S$ generates
$\Z \times M$ where $M$ is the character lattice of $\mathbb{T}_0$
(and can be identified with $(\t^*_0)_{\Z}$). It is easy 
to see that in this situation there exists $d$ sufficiently large such
that $S_d = \nu(H^0(X,L^{\otimes d}) \setminus \{0\})$ generates $M$. 
Since $S_d$ is contained in 
$\iota^*((\t^*_{\P})_{\Z})$ by what we said above, the hypothesis (g)
follows for $d$ sufficiently large. 

Thus, by taking $d$ large enough so that both of the last two phenomena occur,
we obtain the results claimed in the theorem. 
\end{proof}

Finally, in the situation of an integrable system coming from a
toric degeneration arising from a Newton-Okounkov body,
the sections $\sigma^m_{s,t}$ we construct in \S\ref{subsec:varying bases}
form a basis of $H^0(X,L,\bar{\del}_{s,t})$ for all values of $s$ and $t$.
In particular, in this case
we do not need to ``extend the basis'' non-constructively
as in the last sentence of the proof of Theorem~\ref{theorem:main},
at the very end of \S\ref{sec:proof}.

\begin{theorem} \label{thm-sections-lin-ind}
Let the notation be as in Theorem~\ref{theorem:toric deg from NOBY}. 
Then for any fixed $s\geq 0$, $t\neq 0$, 
the set $\{\sigma^m_{s,t} \st m \in W_0\}$
constructed in Definition~\ref{definition:sigma s t} is linearly independent.
In particular, the set $\{ \sigma^m_s \st m\in W_o \}$ constructed
in~\eqref{eq:def sigma m s} is a basis for $H^0(X,L^d,\bar{\del}^{s})$
for every value of $s\geq 0$.
\end{theorem}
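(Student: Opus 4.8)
The plan is to reduce the asserted linear independence to a statement about the position of the subvariety $X_t\subseteq\P$, and then verify that statement using the explicit constructions recalled in Section~\ref{sec:variation}. Recall that in the Newton--Okounkov setting of Theorem~\ref{theorem:toric deg from NOBY} we have $\P=\p((E^d)^*)$, that $\Delta_\P$ is the standard simplex whose $|S_d|$ lattice points are indexed by $s\in S_d$, that $\iota^*$ sends the standard basis vector $e_s$ to $s\in\Z^n$ and is therefore \emph{injective} on $\Delta_\P\cap\Z^N$, and that $W_0=\iota^*(\Delta_\P\cap\Z^N)=S_d$. Hence for each $m\in W_0$ the preimage $\tilde m$ is unique, and $\{\sigma^{\tilde m}\st m\in W_0\}$ is precisely the standard basis of $H^0(\P,L_\P,\overline{\partial}_0)$ given by the homogeneous coordinate sections of $\mathcal{O}(1)$ on $\P\cong\p^{|W_0|-1}$. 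By Definition~\ref{definition:sigma s t} we have $\sigma^m_{s,t}=\tilde g_{s,t}^*\,\sigma^{\tilde m}$, where $g_{s,t}:=\chi_s\circ\rho_{s,t}\circ\phi_{1-t}\colon X\to\P$ is the composite embedding (whose pullback of $J_\P$ is $J_{s,t}$) and $\tilde g_{s,t}:=\tilde\chi_s\circ\tilde\rho_{s,t}\circ\tilde\phi_{1-t}$ is its lift to the line bundles. Since each of $\tilde\chi_s$, $\tilde\rho_{s,t}$, $\tilde\phi_{1-t}$ is a fibrewise isomorphism onto (a restriction of) its target bundle, we have pointwise $(\tilde g_{s,t}^*\sigma)(x)=\tilde g_{s,t}^{-1}(\sigma(g_{s,t}(x)))$, so $\tilde g_{s,t}^*$ annihilates a section $\sigma$ of $L_\P$ exactly when $\sigma$ vanishes on $g_{s,t}(X)$. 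Thus a relation $\sum_{m}c_m\sigma^m_{s,t}=0$ is equivalent to the linear form $\sum_m c_m\sigma^{\tilde m}$ vanishing on $g_{s,t}(X)$, and it suffices to show that $g_{s,t}(X)\subseteq\P$ is \emph{linearly nondegenerate}, i.e.\ lies on no hyperplane.

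The crux is the identification $g_{s,t}(X)=X_t$ (as subsets of $\P$) for every $t\neq0$. First, $\phi_{1-t}$ is a diffeomorphism $X=X_1\to X_t$ by Proposition~\ref{prop-grad-Hamiltonian}(a), so $\phi_{1-t}(X)=X_t$. Next, recall from the proof of Proposition~\ref{prop:rho-smooth-in-t} that the embedding $\rho_{s,t}$ of $V=X_t$ is given by $\rho_{s,t}=\chi_s^{-1}\circ\rho_0\circ\varphi_s\circ\chi_0|_{X_t}$, where $\rho_0$ is the inclusion $X_t\hookrightarrow\P$ and $\varphi_s$ is the flow of a vector field $Y_s$ tangent to $X_t$, hence a diffeomorphism of $X_t$ onto itself. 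Since $\chi_0=\id$ by ($\chi$-1), this gives $\rho_{s,t}(X_t)=\chi_s^{-1}(\varphi_s(X_t))=\chi_s^{-1}(X_t)$, whence $\chi_s(\rho_{s,t}(X_t))=X_t$, and therefore $g_{s,t}(X)=\chi_s(\rho_{s,t}(\phi_{1-t}(X)))=X_t$. In particular $g_{s,t}$ identifies $(X,J_{s,t})$ biholomorphically with $X_t\cong X$, so $\dim_\C H^0(X,L^d,\overline{\partial}_{s,t})=\dim_\C H^0(X,L^d)=|W_0|$ by hypothesis~(h); we use this at the end.

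It remains to see that $X_t$ is linearly nondegenerate in $\P$ for all $t\neq0$. By the explicit formula~\eqref{eq:embedding of family}, the fiber $X_1$ is the image of the Kodaira map of $X$ with respect to the basis~\eqref{eq:basis for E^d} of $E^d$; since these sections span $E^d$, this image lies on no hyperplane of $\P=\p((E^d)^*)$. For general $t\neq0$, the $s$-th coordinate in~\eqref{eq:embedding of family} is the corresponding coordinate at $t=1$ scaled by the nonzero number $t^{\sum_{ij}w_{ij}\alpha_{(ij),s}}$; that is, $X_t=\Lambda_t\cdot X_1$ for the diagonal element $\Lambda_t\in\T_\P$ with these entries. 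A projective-linear automorphism carries hyperplanes to hyperplanes, so $X_t$ is linearly nondegenerate as well. Combined with the first paragraph, $\{\sigma^m_{s,t}\st m\in W_0\}$ is linearly independent for all $s\geq0$ and all $t\neq0$; since it has cardinality $|W_0|=\dim_\C H^0(X,L^d,\overline{\partial}_{s,t})$, it is a basis.

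Finally, for the statement about the $\sigma^m_s$: by~\eqref{eq:def sigma m s} each $\sigma^m_s$ equals $\sigma^m_{0,\,1+(t_0-1)s}$ for $0\leq s\leq1$ and $\sigma^m_{s-1,\,t(s-1)}$ for $s>1$, and in both ranges the $t$-parameter lies in $(0,1]$ (it sweeps $[t_0,1]$ in the first case and equals the positive number $t(s-1)$ in the second). Hence the conclusion of the previous paragraph applies for every $s\geq0$, so $\{\sigma^m_s\st m\in W_0\}$ is a basis of $H^0(X,L^d,\overline{\partial}^s)$, and in this Newton--Okounkov situation there is no need to extend a partial basis non-constructively as at the end of the proof of Theorem~\ref{theorem:main}. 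The only delicate point is the identification $g_{s,t}(X)=X_t$ in the second paragraph: it requires unwinding the construction of $\rho_{s,t}$ from \cite{HamKon}, in particular the fact that the auxiliary Moser-type flow $\varphi_s$ is a self-diffeomorphism of $X_t$, so that passing to $\rho_{s,t}$ and back through $\chi_s$ returns $X_t$ unchanged. Everything else is elementary projective geometry together with the bookkeeping already set up in Section~\ref{sec:variation} and Section~\ref{sec:NOBY}.
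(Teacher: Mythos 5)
Your proposal is correct and follows essentially the same route as the paper's own proof: reduce the linear independence of $\{\sigma^m_{s,t}\}$ to the linear independence of the coordinate sections $\sigma^{\tilde m}$ restricted to $X_t$, then check the latter via the explicit embedding~\eqref{eq:embedding of family}. The one presentational difference is in the middle step establishing that $\chi_s\circ\rho_{s,t}$ carries $X_t$ onto itself: the paper cites \cite[Propositions 6.1(2) and 6.3(2)]{HamKon} to produce a self-diffeomorphism $\underline{\chi}_{s,t}$ of $X_t$ intertwining the two embeddings, whereas you obtain the same conclusion by unwinding the explicit formula $\rho_{s,t}=\chi_s^{-1}\circ\rho_0\circ\varphi_s\circ\chi_0$ from the proof of Proposition~\ref{prop:rho-smooth-in-t} and noting that $\varphi_s$ is a self-diffeomorphism of $X_t$ — an equivalent but slightly more self-contained observation. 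Likewise, your final step (linear nondegeneracy of $X_1$ because $\{f_s\}$ is a basis of $E^d$, carried to $X_t$ by the diagonal scaling $\Lambda_t\in\T_\P$) is an equivalent reformulation of the paper's ``distinct valuations'' argument, since the $f_s$ are a basis precisely because their $\tilde\nu$-values are distinct. Your explicit remarks that $\iota^*$ is injective on $\Delta_\P\cap\Z^N$ (so $\tilde m$ is unique and the $\sigma^{\tilde m}$ exhaust the coordinate sections) and that $\dim H^0(X,L^d,\overline\partial_{s,t})=\lvert W_0\rvert$ by hypothesis~(h) are both correct and make the ``basis'' conclusion explicit where the paper leaves it implicit.
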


\begin{proof}      

By Definition~\ref{definition:sigma s t}, and since $\tilde{\phi}_{1-t}$ is an isomorphism of line bundles
(see Lemma~\ref{lemma:lift-grH}), 
it suffices to show that the $\tilde{\rho}_{s,t}^* \tilde{\chi}^*_{s} \sigma^{\tilde{m}}$ are linearly independent in $H^0(X_t, L_t)$. 
%
%
%
By~\cite[Proposition 6.1(2)]{HamKon},
for each $s\geq 0$ and each $0 < t \leq 1$ 
we have a diffeomorphism $\underline{\chi}_{s,t}\colon X_t \to X_t$
such that the following diagram commutes:
\[ 
\xymatrix{ 
\P \ar[r]^{\chi_s} & \P \\
X_t \ar@{^{(}->}[u]^{\rho_{s,t}} \ar[r]^{\underline{\chi}_{s,t}} & X_t \ar@{^{(}->}[u]_\rho \\
}
\]
where $\rho$ is the standard embedding of $X_t$ into $\P$ as a complex
manifold, 
and such that $(X_t, \rho^*_{s,t} \omega_\P, \chi^*_s J_\P)$ is \kahler. (Note that with our identifications the map $\chi_0$ in \cite{HamKon} is the identity.) The $\rho_{s,t}$ appearing in the diagram above are the embeddings of Proposition~\ref{prop:embedding of submanifold}. 
Similarly, by~\cite[Proposition 6.3(2)]{HamKon}
the map $\underline{\chi}_{s,t}$ lifts to a map $\tilde{\underline{\chi}}_{s,t} \colon L_t \to L_t$
such that 
\[
\xymatrix{ 
L_\P \ar[r]^{\tilde{\chi}_{s}} & L_\P \\
L_\P \vert_{X_t} \ar[r]^{\tilde{\underline{\chi}}_{s,t}} \ar[u]^{\tilde{\rho}_{s,t}}
& L_\P \vert_{X_t} \ar[u]_{\tilde{\rho}}
}
\]
commutes, where $\tilde{\rho}_{s,t}$ are the maps in Lemma~\ref{lemma:lift-rhos}.
Then $\tilde{\chi}_s^* \sigma^{\tilde{m}}\vert_{X_t} =
\tilde{\underline{\chi}}_{s,t}^* (\sigma^{\tilde{m}}\vert_{X_t})$. Since the $\tilde{\chi}_{s,t}$ are line bundle isomorphisms, 
it will suffice to show that the $\sigma^{\tilde{m}} \vert_{X_t}$'s are linearly independent.

To see this, recall the construction of the embedding of 
$X\cross \C^*$ into $\P\cross \C$.  
The sections 
\[ \{f_s = f_{11}^{\alpha_{(11),s}} f_{12}^{\alpha_{(12),s}} \cdots
f_{r n_r}^{\alpha_{(r n_r),s}} \mid s \in S_d\} \]
from \eqref{eq:basis for E^d} form a basis for the
global sections of $\mathcal{O}(1) = L_\P$ on the projective space $\P =
\mathbb{P}((E^d)^*)$. The embedding of the family $X \times
\C^* \subset \X$ in $\P \times \C$ in this basis is given by the map
in \eqref{eq:embedding of family} whose components are
$t^{\sum_{ij}w_{ij} \alpha_{ij}} f_s$, $s \in S_d$. 
Since the line bundle is $\mathcal{O}(1)$, 
the functions $\sigma^{\tilde{m}}$ in the basis of its holomorphic sections 
are simply the coordinate functions on $\P$, 
and so the section $\sigma^{\tilde{m}}\vert_{X_t}$ corresponds to one of the coordinates 
$t^{\sum_{ij}w_{ij} \alpha_{ij}} f_s$.  

By construction, for fixed $t \neq 0$ the values of the valuation
$\tilde{v}$ on the components $t^{\sum_{ij}w_{ij} \alpha_{ij}} f_s$
are distinct.  Since elements with
distinct values of valuation are linearly independent, 
it follows that, for any fixed $t \neq 0$, the sections $\{ \sigma^{\tilde{m}} \vert_{X_t} \st m \in W_0 \}$ 
 are linearly independent, as desired.  
\end{proof}


\end{document}